\NewDocumentCommand{\ceil}{s O{} m}{%
  \IfBooleanTF{#1} 
    {\left\lceil#3\right\rceil} 
    {#2\lceil#3#2\rceil} 
}
\newtheorem{theorem}{Theorem}[section]
\newtheorem{lemma}[theorem]{Lemma}
\theoremstyle{remark}
\newtheorem{remark}[theorem]{\bf{Remark}}
\theoremstyle{definition}
\newtheorem{assumption}[theorem]{Assumption}
\newtheorem{definition}[theorem]{Definition}
\newcommand\cbrk{\text{$]$\kern-.15em$]$}}
\newcommand\opar{\text{\,\raise.2ex\hbox{${\scriptstyle
|}$}\kern-.34em$($}}
\newcommand\cpar{\text{$)$\kern-.34em\raise.2ex\hbox{${\scriptstyle |}$}}\,}
\newcommand{\aint}{-\hspace{-0.38cm}\int}
\newcommand\bL{\mathbb{L}}
\newcommand\bR{\mathbb{R}}
\newcommand\bH{\mathbb{H}}
\newcommand\bZ{\mathbb{Z}}
\newcommand\cB{\mathcal{B}}
\newcommand\cD{\mathcal{D}}
\newcommand\cF{\mathcal{F}}
\newcommand\cO{\mathcal{O}}
\newcommand\cS{\mathcal{S}}
\newcommand{\mysection}[1]{\section{#1}
\setcounter{equation}{0}}
\newcommand{\Ccinf}{C_{c}^{\infty}}
\newcommand{\R}{\mathbb{R}}
\begin{document}

\title[maximal regularity for time-fractional equations with weights]
{Weighted maximal $L_{q}(L_{p})$-regularity theory for time-fractional diffusion-wave equations with variable coefficients}

\author{Daehan Park}
\address{Stochastic Analysis and Application Research Center, Korea Advanced Institute of Science and Technology, 291 Daehak-ro, Yuseong-gu, Daejeon, 34141, Republic of Korea} \email{daehanpark@kaist.ac.kr}

\thanks{This work was supported by the National Research Foundation of Korea(NRF) grant funded by the Korea government(MSIT) (No. NRF-2019R1A5A1028324)} 

\subjclass[2020]
{35B65,
45K05,
26A33,
46B70,
47B38
}

\keywords{Fractional diffusion-wave equation, $L_q(L_p)$-regularity theory, Muckenhoupt $A_p$ weights, Equations with variable coefficients, Caputo fractional derivative, Sobolev space with weight,  Complex interpolation of function spaces}

\begin{abstract}

We present a maximal $L_{q}(L_{p})$-regularity theory with Muckenhoupt weights for the equation
\begin{equation}\label{eqn 01.26.16:00}
\partial^{\alpha}_{t}u(t,x)=a^{ij}(t,x)u_{x^{i}x^{j}}(t,x)+f(t,x),\quad t>0,x\in\mathbb{R}^{d}.
\end{equation}
Here, $\partial^{\alpha}_{t}$ is the Caputo fractional derivative of order $\alpha\in(0,2)$ and $a^{ij}$ are functions of $(t,x)$. Precisely, we show that
\begin{equation*}
\begin{aligned}
&\int_{0}^{T}\left(\int_{\mathbb{R}^{d}}|(1-\Delta)^{\gamma/2}u_{xx}(t,x)|^{p}w_{1}(x)dx\right)^{q/p}w_{2}(t)dt
\\
&\quad \leq N \int_{0}^{T}\left(\int_{\bR^{d}}|(1-\Delta)^{\gamma/2}f(t,x)|^{p}w_{1}(x)dx\right)^{q/p}w_{2}(t)dt,
\end{aligned}
\end{equation*}
where $1<p,q<\infty$, $\gamma\in\bR$, and $w_{1}$ and $w_{2}$ are Muckenhoupt weights. This implies that we prove maximal regularity theory, and sharp regularity of solution according to regularity of $f$. To prove our main result, we also proved the complex interpolation of weighted Sobolev spaces,
$$
[H^{\gamma_{0}}_{p_{0}}(w_{0}), H^{\gamma_{1}}_{p_{1}}(w_{1})]_{[\theta]} = H^{\gamma}_{p}(w),
$$
where $\theta\in (0,1)$, $\gamma_{0},\gamma_{1}\in\bR$,  $p_{0},p_{1}\in(1,\infty)$, $w_{i}$ ($i=0,1$) are arbitrary $A_{p_{i}}$ weight, and
$$
\gamma=(1-\theta)\gamma_{0}+\theta\gamma_{1}, \quad \frac{1}{p}=\frac{1-\theta}{p_{0}} + \frac{\theta}{p_{1}},\quad w^{1/p}=w^{\frac{(1-\theta)}{p_{0}}}_{0}w^{\frac{\theta}{p_{1}}}_{1}.
$$
\end{abstract}

\maketitle

\mysection{Introduction}
The classical heat equation $\partial_{t}u=\Delta u$ describes the heat diffusion in homogeneous media. If we change the order of differentiation in time variable, then the equation becomes
\begin{equation*}
\partial_t^\alpha u(t,x) = \Delta u(t,x) +f(t,x),
\end{equation*}
and it describes diffusion of particles ($\alpha\in(0,1)$) or propagation of waves ($\alpha\in(1,2)$) in inhomogeneous media (see e.g. \cite{metzler1999anomalous,metzler2000random,mainardi1995fractional,mainardi2001fractional}).

In this article, we prove the unique solvability and maximal regularity theory for equation \eqref{eqn 01.26.16:00} with zero initial condition in Sobolev spaces with Muckenhoupt weights. Precisely, we prove that for any $p,q\in (1,\infty)$, $\gamma\in\bR$, $w_{1}=w_{1}(x)\in A_p(\bR^{d})$ and $w_{2}=w_{2}(t)\in A_q(\bR)$, there exists a unique solution satisfying

$$
\| |\partial^{\alpha}_{t}u|+|u|+|u_{x}|+|u_{xx}| \|_{\bH^{\gamma}_{q,p}(w_{2},w_{1},T)} \leq N \| f \|_{\bH^{\gamma}_{q,p}(w_{2},w_{1},T)},
$$
where the norm in $\bH^{\gamma}_{q,p}(w_{2},w_{1},T)$ is defined by
$$
\| f \|_{\bH^{\gamma}_{q,p}(w_{2},w_{1},T)} := \left( \int_0^T \left( \int_{\bR^d} |(1-\Delta)^{\gamma/2}f(t,x)|^p w_{1}(x)dx\right)^{q/p}w_{2}(t)dt\right)^{1/q},
$$
and $A_p(\bR^{d})$ denotes the class of Muckenhoupt $A_p$ weights defined on $\bR^{d}$.

The class of Muckenhoupt weights is the optimal class of weights for Hardy-Littlewood maximal functions to have boundedness in $L_{p}$ spaces with weight (see \cite{muckenhoupt1972,grafakos2009modern}). Also many types of Fourier multipliers role as a bounded operator in $L_{p}$ space with Muckenhoupt weight (see e.g. \cite{de1986calderon,kurtz1980littlewood,fackler2020}). These facts make it possible to develop theories for Sobolev spaces with Muckenhoupt weights (see e.g. \cite{gutierrez1991sobolev,miller1982sobolev,fabes1982local}). Also see e.g. \cite{goldshtein2009} for embedding theorems in Sobolev space with $A_{p}$ weight. In this article, we prove general interpolation inequalities in Sobolev spaces with Muckenhoupt weights (Theorem \ref{thm 09.24.13:17} and Remark \ref{rmk 01.14.15:40}). Precisely, we prove that for $\theta\in (0,1)$, $\gamma_{0},\gamma_{1}\in\bR$, $p_{0},p_{1}\in(1,\infty)$,   $w_{0}\in A_{p_{0}}$, $w_{1}\in A_{p_{1}}$, 
$$
\|u\|_{H^{\gamma}_{p}(w)}  \leq \|u\|^{1-\theta}_{H^{\gamma_{0}}_{p_{0}}(w_{0})}\|u\|^{\theta}_{H^{\gamma_{1}}_{p_{1}}(w_{1})},
$$
where
$$
\gamma=(1-\theta)\gamma_{0}+\theta\gamma_{1}, \quad \frac{1}{p}=\frac{1-\theta}{p_{0}} + \frac{\theta}{p_{1}},\quad w^{1/p}=w^{\frac{(1-\theta)}{p_{0}}}_{0}w^{\frac{\theta}{p_{1}}}_{1}.
$$
The above relation is widely studied in the literature. In \cite{L1982}, the author characterized complex interpolation spcaces of weighted Sobolev spaces when the given weight satisfies polynomial growth condition, which covers the results for unweighted Sobolev spaces in \cite{bergh2012interpolation}. For abstract function spaces with weight, complex interpolation was given in e.g. \cite{MS2012,LMV2018}, where the weights are types of power functions. For example, if $X$ is a UMD space, $w_{\mu}(t)=t^{(1-\mu)p}$ ($\mu\in(1/p,1])$, and $\gamma_{0},\gamma_{1}\geq0$, then
$$
[H^{\gamma_{0}}_{p}(\bR_{+},w_{\mu};X),H^{\gamma_{1}}_{p}(\bR_{+},w_{\gamma};X)]_{[\theta]} = H^{s}_{p}(\bR_{+},w_{\mu};X),
$$
where $H^{s}_{p}(\bR_{+},w_{\mu};X)$ is $X$-valued weighted Sobolev space defined on $\bR_{+}$. Here we emphasize that $w_{\mu}\in A_{p}(\bR)$ (see \cite[Lemma 2.8]{MS2012}).  Comparing the above-mentioned ones, our result only requires that the given weights are in $A_{p}$ class, not the exact growth or shape of weights. Also, when $\gamma_{0}=\gamma_{1}=1$, and $p_{0}=p_{1}$, then the condition for weights $w_{0}$ and $w_{1}$ was relaxed in \cite{CE2019} so that they need not be in $A_{p}$.

An $L_q(L_p)$-estimation without weight for abstract parabolic Volterra equations like
\begin{equation*}
\frac{\partial}{\partial t}\left(c_{0}u+\int_{-\infty}^{t}k_{1}(t-s)u(s,x)ds \right)=a^{ij}u_{x^{i}x^{j}}(t,x)+f(t,x)
\end{equation*}
was introduced in \cite{clement1992global, Pr1991} for $a^{ij}=\delta^{ij}$, $\alpha \in (0,1)$, and
$$
\frac{2}{\alpha q}+\frac{d}{p}<1,
$$
where $c_{0}\geq 0$ and $k_{1}(t)\geq ct^{-\alpha}$. The results of \cite{clement1992global, Pr1991} are based on semi group theory, and similar approach is used in \cite{zacher2005maximal} for general $a^{ij}(t,x)$ under the conditions that $p=q>1$, $a^{ij}$ are uniformly continuous in $(t,x)$, and
$$
\alpha\not\in \left\{\frac{2}{2p-1}, \frac{2}{p-1}-1,\frac{1}{p}, \frac{3}{2p-1}\right\}.
$$
For \eqref{eqn 01.26.16:00} the algebraic restriction on $\alpha,d,p$ and $q$ are dropped in \cite{kim17timefractionalpde} under the condition that $a^{ij}$ are uniformly continuous in $x$ and piecewise continuous in $t$. The result in \cite{kim17timefractionalpde} is obtained by using Calder\'on-Zygmund theorem, and estimation of fundamental solution. The condition for coefficients are significantly relaxed in \cite{dong2019lp} for the case $p=q$ and $\alpha\in (0,1)$ so that coefficients are measurable in $t$ and have small mean oscillation in $x$.

For weighted mixed norm estimate, see \cite{han19timefractionalAp} for the case $a^{ij}=\delta^{ij}$. The result of \cite{han19timefractionalAp} is based on solution representation by using kernel, and the authors control the mean oscillation of the solution and its derivative to find weighted mixed estimation. Quite recently, weighted mixed norm estimation under the same condition in \cite{dong2019lp} is proved in \cite{dong2021lp}. However, the results of \cite{dong2021lp,han19timefractionalAp} only cover the case $\gamma=0$. Also the result of \cite{han19timefractionalAp} only covers the case $a^{ij}=\delta^{ij}$, and the result of \cite{dong2021lp} does not cover the range of $\alpha\in(1,2)$.

Our result is an extension of \cite{dong2021lp,han19timefractionalAp} in the sense that we give general regularity theory in $L_{q}(L_{p})$ spaces with Muckenhoupt weights for equations with variable coefficients, and we cover the full range of $\alpha\in(0,2)$ without any restrictions on $\alpha,d,p,q$. Our approach depends on a perturbation argument used in \cite{kim17timefractionalpde,kry99analytic} which is a standard approach if one starts from the result of equations with constant coefficients. The main difficulty arises here, and we deal with this as follows.

First, since we consider maximal regularity theory for equations with variable coefficients in weighted space, we need to prove pointwise multiplier theorem like
$$
\int_{\bR^{d}}|(1-\Delta)^{k/2}(au)(x)|^{p}w_{1}(x)dx \leq N |a|^{p}_{C^{k}}\int_{\bR^{d}}|(1-\Delta)^{k/2}u(x)|^{p} w_{1}(x)dx.
$$
If $k$ is not a nonnegative integer, then we cannot obtain this by product rule. To handle this, we construct complex interpolation for Sobolev space with weight by using Fourier multiplier theorems and abstract interpolation theory. Second, the perturbation argument in \cite{kim17timefractionalpde,kry99analytic} uses uniform localization theorem for Sobolev space in \cite{kry94Littlewood}. Hence, we need to prove the corresponding theorem for Sobolev space with weight.

This article is organized as follows. In Section 2, we introduce basic definitions and facts related to fractional calculus, and Sobolev spaces with Muckenhoupt weights. Also, we present our main result, Theorem
\ref {theorem 5.1}. In Section 3, we prove properties of interpolation spaces of Sobolev spaces with weights, and uniform localization theorem for these spaces. In Section 4, we prove the main theorem.

We finish the introduction with notation used in this article. $\mathbb{R}^d$ denotes the $d$-dimensional Euclidean space of points $x = (x^1,\dots,x^d)$. We set $B_r(x) = \{ y\in\mathbb{R}^d:|x-y|<r \}$ and $B_r = B_r(0)$. If a set $E$ is in $\R^{d}$ (or $\R^{d+1}$), then $|E|$ is the Lebesgue measure of $E$. For $i=1,...,d$, multi-indices $\sigma=(\sigma_{1},...,\sigma_{d})$,
$\sigma_{i}\in\{0,1,2,...\}$, and functions $u(t, x)$ we set
$$
u_{x^{i}}=\frac{\partial u}{\partial x^{i}}=D_{i}u,\quad
D^{\sigma}u=D^{\sigma}_xu=D_{1}^{\sigma_{1}}\cdot...\cdot D^{\sigma_{d}}_{d}u.
$$
We also use the notation $D^m$ (or $D^m_x$) for partial derivatives of order $m$ with respect to $x$.
Similarly,
by $\partial_{t}^nu$ (or $\frac{d^n}{dt^n}u$) we
mean a partial derivative of order $n$ with respect to $t$. $C_c^\infty(\cO)$ denotes the collection of all infinitely differentiable functions with compact support in $\cO$, where $\cO$ is an open set in $\mathbb{R}^d$ or $\R^{d+1}$. Also $\cS=\cS(\bR^{d})$ denotes the space of Schwartz functions. For vector spaces $F$, and $G$ contained in a vector space $X$, we use the notation $F+G$ to denote the space of vectors $f+g$, where $f\in F$, and $g \in G$. For a measure space $(E,\mu)$, a Banach space $B$, and $p\in(1,\infty)$, $L_p(E,\mu;B)$ denotes the set of $B$-valued $\mu$-measurable functions $u$ on $E$ satisfying
\begin{equation*}
\| u \|_{L_p(E,\mu;B)} := \left(\int_E\|u\|_B^p d\mu\right)^{1/p}<\infty.
\end{equation*}
If $B = \R$, then $L_p(E,\mu;B) = L_p(E,\mu)$. For a measurble set $A$ and a measurable function $f$, we use the following notation
\begin{equation*}
\aint_{A}f(x)d\mu :=\frac{1}{\mu(A)}\int_{A}f(x)d\mu.
\end{equation*}
By $\cF$ and $\cF^{-1}$ we denote the Fourier and the inverse Fourier transform in $\bR^{d}$;
$$
\cF(f)=:\frac{1}{(2\pi)^{d/2}} \int_{\bR^{d}}e^{-i x\cdot \xi} f(x) dx, \quad \cF^{-1}(f)=:\frac{1}{(2\pi)^{d/2}} \int_{\bR^{d}}e^{i x\cdot \xi} f(\xi) d\xi.
$$
For two numbers $a,b\in(0,\infty)$ we write $a\sim b$ if there exists a constant $c>0$ independent of $a,b$ such that $c^{-1}a\leq b \leq ca$. Finally, if we write $N=N(a,b,\ldots)$, then this means that the constant $N$ depends only on $a,b,\ldots$.

\mysection{Main result}

We first introduce some definitions and facts related to the fractional calculus. For more details, see e.g. \cite{baleanu12fractional,podlubny98fractional,richard14fractional,samko93fractional}.
For $\alpha>0$ and $\varphi\in L_1((0,T))$, the Riemann-Liouville fractional integral  of order $\alpha$ is defined by
$$ I^\alpha_{t} \varphi(t) :=(I^\alpha_{t} \varphi) (t):= \frac{1}{\Gamma(\alpha)}\int_0^t(t-s)^{\alpha-1}\varphi(s)ds,\quad  t\leq T.
$$
By Jensen's inequality, for any $p\in[1,\infty]$, 
\begin{equation*}
\|I^\alpha_t \varphi \|_{L_p((0,T))}\leq N(\alpha,p,T)\| \varphi \|_{L_p((0,T))}.
\end{equation*}
It is also easy to check that  if $\varphi$ is bounded then $I^{\alpha}_t \varphi(t)$ is a continuous function satisfying $I^{\alpha}_t\varphi(0)=0$.   

Let $n$ be the integer such that $n-1\leq \alpha<n$. If $\varphi$ is $(n-1)$-times differentiable, and $(\frac{d}{dt})^{n-1}I_{t}^{n-\alpha}\varphi$ is absolutely continuous on $[0,T]$, then the Riemann-Liouville fractional derivative $D_t^\alpha \varphi$ and  the Caputo fractional derivative $\partial_t^\alpha\varphi$ are defined as follows.
\begin{equation*}
D^{\alpha}_t \varphi (t):=(D^{\alpha}_t \varphi) (t):=(I_{t}^{n-\alpha}\varphi)^{(n)}(t),
\end{equation*}
\begin{equation}\label{Caputo_Riemann}
\partial^{\alpha}_{t}\varphi (t) :=D^{\alpha}_{t}\left(\varphi(t)-\sum_{k=0}^{n-1}\frac{t^{k}}{k !}\varphi^{(k)}(0)\right)(t).
\end{equation}
Obviously, $D^\alpha_t\varphi = \partial_t^\alpha \varphi$  if $\varphi(0) = \varphi^{(1)}(0) = \cdots = \varphi^{(n -1)}(0) = 0$.  
It is easy to show that, for any $\alpha,\beta \geq0$, 
$$
I^{\alpha+\beta}_{t}\varphi(t)=I^{\alpha}_{t}I^{\beta}_{t}\varphi(t), \quad
D^\alpha_{t} D^\beta_{t}\varphi = D^{\alpha+\beta}\varphi,
$$
and
$$ 
D^\alpha_{t} I^\beta_{t} \varphi = \begin{cases} D^{\alpha-\beta}_{t}\varphi &\mbox{ if }\alpha>\beta \\ I^{\beta-\alpha}_{t}\varphi &\mbox{ if }\alpha\leq\beta\end{cases}
$$
Furthermore, if $\varphi$ is sufficiently smooth (say, $\varphi\in C^{n}([0,T]))$  and $\varphi(0)=\cdots =\varphi^{(n-1)}(0)=0$, then
\begin{equation}
    \label{eqn 10.1}
I^{\alpha}_t\partial^{\alpha}_t \varphi(t): = I^{\alpha}_t (\partial^{\alpha}_t \varphi) (t)=\varphi(t), \quad \forall \, t\in [0,T].
\end{equation}
Consequently, if $\varphi\in C^{2}([0,T])$ and $\alpha\in (0,2)$, then $\partial^{\alpha}_t \varphi=f$  is equivalent to
\begin{equation}
   \label{eqn 4.29.5}
   \varphi(t)-\varphi(0)-1_{\alpha>1}\varphi'(0)t=I^{\alpha}_t f(t), \quad \forall\,  t\in [0,T].
   \end{equation}

Now we introduce the class of weights used in this article.
\begin{definition}\label{def 05.06.1}
Let $1<p<\infty$. We write $w\in A_p(\mathbb{R}^{d})$ if $w(x)$ is a nonnegative locally integrable function on $\bR^d$ such that
\begin{equation*}
[w]_{p}:=\sup_{B}\left(\aint_{B}w(x)dx\right)\left(\aint_{B}w(x)^{-1/(p-1)}dx\right)^{p-1}<\infty,
\end{equation*}
where the supremum is taken over all balls $B$ in $\mathbb{R}^{d}$, and
\begin{equation*}
\aint_{B}w(x)dx=\frac{1}{|B|}\int_{B}w(x)dx.
\end{equation*}
If $w\in A_{p}(\mathbb{R}^{d})$, then $w$ is said to be an $A_p$ weight.
\end{definition}

\begin{remark} \label{rmk 06.20.1}
The necessary and sufficient condition for Hardy-Littlewood maximal function 
to be bounded in $L_p(w dx)$ is $w\in A_p(\bR^{d})$ (see \cite{muckenhoupt1972} or \cite[Theorem 9.1.9]{grafakos2009modern}). Therefore, if one uses an approach based on   sharp and maximal functions, then it is natural to consider $L_p$-spaces with $A_p$ weights for full generality.
\end{remark}

\begin{remark}\label{rmk 09.27.21:52}
(i) From the definition of $[w]_{p}$, we can easily check that for any $c>0$, $y\in\bR^{d}$ and $w\in A_{p}(\mathbb{R}^{d})$, the functions $w^{c}(x)=w(cx)$, and $w_{y}(x)=w(x-y)$ are in $A_{p}(\mathbb{R}^{d})$ and $[w^{c}]_{p}=[w_{y}]_{p}=[w]_{p}$.

Also for $w\in A_{p}(\bR^{d})$ and for any $d\times d$ orthogonal matrix $Q$, it is easy to see that $w_{Q}(x)=w(Qx)\in A_{p}(\bR^{d})$, and $[w_{Q}]_{p}=[w]_{p}$. Therefore, for any symmetric $d\times d$ matirx $\Sigma$ with positive eigenvalues, one can check that $w_{\Sigma}(x)=w(\Sigma x)\in A_{p}(\bR^{d})$ and $[w]_{p}\sim [w_{\Sigma}]_{p}$.

(ii) The class $A_{p}$  is increasing as $p$ increases, and it holds that
\begin{equation*}
A_{p}(\mathbb{R}^{d})=\bigcup_{q\in(1,p)} A_{q}(\mathbb{R}^{d}).
\end{equation*}
More precisely, for any $w\in A_{p}(\mathbb{R}^{d})$, one can find $q<p$, which depends on $d,p$, and $[w]_{p}$ such that $w\in A_{q}(\mathbb{R}^{d})$.
\end{remark}

Let $p,q\in(1,\infty)$, $\gamma\in\bR$, and $T\in(0,\infty]$. For $w_{1} = w_{1}(x)\in A_{p}(\mathbb{R}^{d})$ and $w_{2}=w_{2}(t)\in A_{q}(\mathbb{R})$, we define
$$
L_{p}(w_{1})=L_{p}(\R^{d},w_{1}dx),
$$
and we denote $H^{\gamma}_{p}(w_{1})$ by the class of tempered distributions satisfying
$$
\|u\|_{H^{\gamma}_{p}(w_{1})} = \|(1-\Delta)^{\gamma/2}u\|_{L_{p}(w_{1})} <\infty,
$$
where the operator $(1-\Delta)^{\gamma/2}$ is defined by
$$
(1-\Delta)^{\gamma/2} u = \cF^{-1} \left((1+|\xi|^2)^{\gamma/2}\cF u \right).
$$
The action of $u\in H^{\gamma}_{p}(w_{1})$ on $\phi\in\cS(\R^{d})$, which is denoted by $(u,\phi)$, is defined by
\begin{equation}
   \label{eqn 5.12.2}
(u,\phi):=((1-\Delta)^{\gamma/2}u, (1-\Delta)^{-\gamma/2}\phi):=\int_{\bR^{d}}(1-\Delta)^{\gamma/2}u (1-\Delta)^{-\gamma/2}\phi dx.
\end{equation}
Also we define
$$
\bL_{q,p}(w_{2},w_{1},T)=L_{q}((0,T), w_{2}dt ; L_{p}(w_{1})),
$$
and
$$
\bH^{\gamma}_{q,p}(w_{2},w_{1},T)=L_{q}((0,T),w_{2}dt ; H^{\gamma}_{p}(w_{1})).
$$
We omit $T$ if $T = \infty$. For example,
\begin{equation*}
\bL_{q,p}(w_{2},w_{1}) = \bL_{q,p}(w_{2},w_{1},\infty) = L_q((0,\infty),w_{2}dt ;L_p(w_{1})).
\end{equation*}

The norms of these function spaces are defined in a natural way. For example,
\begin{equation*}
\begin{aligned}
\|f\|_{\bL_{q,p}(w_{2},w_{1},T)}&:=\left(\int_0^T \|f(t,\cdot)  \|_{L_p(w_{1})}^qw_{2}(t) dt\right)^{1/q} \\
&=\left(\int_{0}^{T}\left( \int_{\R^{d}}|f(t,x)|^{p}w_{1}(x)dx \right)^{q/p}w_{2}(t)dt\right)^{1/q}.
\end{aligned}
\end{equation*}

\begin{remark}\label{rmk 05.08.1}

(i) Suppose that $w_{1}\in A_{p}(\bR^{d})$. Then the weighted Sobolev spaces have similar properties as the usual $L_p$ spaces. For example, one can check that $C_c^\infty(\R^{d})$ is dense in $H_p^{\gamma}(w_{1})$ and $H_p^{\gamma}(w_{1})$ is a Banach space. The dual space of $H^{\gamma}_{p}(w_{1})$ is $H^{-\gamma}_{p'}(\tilde{w_{1}})$, where
\begin{equation} \label{dual}
1/p+1/p'=1,\quad \tilde{w_{1}}=w^{-\frac{1}{p-1}}_{1}.
\end{equation}
Also, for any $\gamma,\nu\in\mathbb{R}$, $(1-\Delta)^{\nu/2}$ is an isometry from $H^{\gamma}_{p}(w_{1})$ to $H^{\gamma-\nu}_{p}(w_{1})$, and $H^{\nu}_{p}(w_{1})\subset H^{\gamma}_{p}(w_{1})$ if $\nu\leq \gamma$. Finally, if $\gamma$ is a nonnegative integer, then for any multi-index $\sigma$ with $|\sigma|\leq \gamma$, the operator
$$
D^{\sigma}(1-\Delta)^{-\gamma/2} \quad |\sigma|\leq \gamma,
$$ 
is a bounded operator in $L_{p}(w)$ (see \cite{miller1982sobolev}). This implies that
$$
\|u\|_{H^{\gamma}_{p}(w_{1})}\sim \sum_{|\beta| \leq \gamma} \|D^{\beta}u\|_{L_{p}(w_{1})}.
$$

(ii) Let $\phi \in \mathcal{S}(\bR^{d})$ and $\phi_{\varepsilon}(x):=\varepsilon^{-d}\phi(x/\varepsilon)$, $\varepsilon\in(0,\infty)$.  Then by \cite[Remark 2.4]{han19timefractionalAp}, for any $f\in L_{p}(w_{1})$, it follows that $f^{\varepsilon}(x):=\int_{\mathbb{R}^{d}} \phi_{\varepsilon}(x-y)f(y)dy$ satisfies
$$
\|f^{\varepsilon}\|_{L_{p}(w_{1})} \leq N \|f\|_{L_{p}(w_{1})},
$$
where the constant $N$ is independent of $\varepsilon,f$, and depends only on $d,p,\|\phi\|_{L_{\infty}},\|D\phi\|_{L_{\infty}}$, and $[w_{1}]_{p}$. Moreover, if we take $\phi\in \Ccinf(B_{1})$, then $f^{\varepsilon}$ converges to $f$ in $L_{p}(w_{1})$ as $\varepsilon\downarrow 0$. Similarly, for $f\in H^{\gamma}_{p}(w_{1})$, and $\phi\in \Ccinf(B_{1})$, the function $f^{\varepsilon}(x):=(f,\phi_{\varepsilon}(x-\cdot))$ is infinitely differentiable and converges to $f$ in $H^{\gamma}_{p}(w_{1})$ as $\varepsilon\downarrow 0$. Indeed, if we let $(1-\Delta)^{\gamma/2}f=h$,
\begin{equation*}
\begin{aligned}
\|f-f^{\varepsilon}\|_{H^{\gamma}_{p}(w_{1})} &= \|h-h^{\varepsilon}\|_{L_{p}(w_{1})} 
\\
&\leq \|h-\eta \|_{L_{p}(w_{1})} + \|\eta-\eta^{\varepsilon}\|_{L_{p}(w_{1})} + \|\eta^{\varepsilon}-h^{\varepsilon}\|_{L_{p}(w_{1})}
\\
&\leq N \delta +  \|\eta-\eta^{\varepsilon}\|_{L_{p}(w_{1})},
\end{aligned}
\end{equation*}
where $\delta>0$, and $\eta\in \Ccinf(\bR^{d})$ such that $\|\eta-h\|_{L_{p}(w_{1})}<\delta$. By letting $\varepsilon \downarrow 0$, since $\delta>0$ is arbitrary we have the desired result.
\end{remark}

\begin{lemma}\label{lem 12.21.16:37}
Let $\alpha>0$, $1<q<\infty$, $0<T<\infty$,  and let $w_{2}\in A_{q}(\mathbb{R})$. Then for any $f\in L_{q}((0,T),w_{2}dt)$, we have
\begin{equation}\label{eqn 12.22.16:40}
\|I^{\alpha}_{t}f\|_{L_{q}((0,T),w_{2}dt)} \leq  T^{\alpha} N  \|f\|_{L_{q}((0,T),w_{2}dt)}
\end{equation}
where the constant $N$ depends only on $\alpha,q,[w_{2}]_{q}$.
\end{lemma}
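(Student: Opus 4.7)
The plan is to pointwise dominate $|I^{\alpha}_t f(t)|$ by a constant multiple of $T^{\alpha}\, \cM f(t)$, where $\cM$ is the one-dimensional Hardy--Littlewood maximal operator applied to $f$ extended by zero outside $(0,T)$, and then invoke Muckenhoupt's theorem (cf.\ Remark \ref{rmk 06.20.1}) to pass to the weighted $L_q$-norm. Note that the clean unweighted bound via Young's convolution inequality does not transfer to $L_q(w_2\,dt)$ for general $w_2\in A_q$, so going through the maximal function is natural.

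First I would establish the pointwise estimate
\[
|I^{\alpha}_t f(t)| \leq N(\alpha)\, t^{\alpha}\, \cM f(t), \qquad t\in(0,T).
\]
For $\alpha\geq 1$ this is immediate, since $(t-s)^{\alpha-1}\leq t^{\alpha-1}$ on $(0,t)$ and $\int_0^t|f(s)|\,ds\leq 2t\,\cM f(t)$. For $0<\alpha<1$ the kernel is singular at $s=t$, so I would split the integral at $s=t/2$. On $(0,t/2)$ one has $t-s\geq t/2$, hence $(t-s)^{\alpha-1}\leq (t/2)^{\alpha-1}$, and combined with the averaging bound this piece is $\leq N t^{\alpha}\cM f(t)$. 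On $(t/2,t)$, after the substitution $u=t-s$, I would decompose dyadically into shells $u\in(t\,2^{-k-2},\,t\,2^{-k-1}]$, $k\geq 0$; on each shell the kernel is bounded by $(t\,2^{-k-2})^{\alpha-1}$ and the integral of $|f|$ over an interval of length $\leq t\,2^{-k-1}$ is $\leq t\,2^{-k-1}\cM f(t)$. Summing the convergent geometric series $\sum_{k\geq 0}2^{-k\alpha}$ yields the claim, with constant depending only on $\alpha$.

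With the pointwise estimate in hand, the rest is routine: since $w_2\in A_q(\mathbb{R})$, the operator $\cM$ is bounded on $L_q(\mathbb{R},w_2\,dt)$ with norm depending only on $q$ and $[w_2]_q$, and extending $f$ by zero outside $(0,T)$ preserves the $L_q(w_2\,dt)$-norm. Combining this with $t^{\alpha}\leq T^{\alpha}$ on $(0,T)$ yields \eqref{eqn 12.22.16:40}. The only nontrivial step is the pointwise bound for $0<\alpha<1$, handled by the dyadic splitting just described; everything else reduces to the classical weighted maximal inequality already invoked in the paper.
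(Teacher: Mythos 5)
Your proposal is correct, and the pointwise estimate $|I^{\alpha}_t f(t)|\leq N(\alpha)\,t^{\alpha}\,\mathcal{M}f(t)$ is established soundly: the dyadic shells $u\in(t\,2^{-k-2},\,t\,2^{-k-1}]$ tile $(0,t/2]$, on each shell the kernel is bounded by $(t\,2^{-k-2})^{\alpha-1}$, the averaging over an interval reaching $t$ of length $t\,2^{-k-1}$ controls the shell integral by $t\,2^{-k-1}\mathcal{M}f(t)$, and the resulting geometric series $\sum_k 2^{-k\alpha}$ converges for every $\alpha>0$. The subsequent appeal to the weighted boundedness of $\mathcal{M}$ with $f$ extended by zero is exactly what is needed, since $I^{\alpha}_t f(t)$ for $t\leq T$ depends only on $f|_{(0,T)}$.

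Where you differ from the paper is the range $\alpha\in(0,1)$: the paper does not prove this case at all but simply cites \cite[Lemma 5.5 (a)]{dong2021lp}, reserving the elementary argument $(t-s)^{\alpha-1}\leq t^{\alpha-1}$ for $\alpha\geq 1$, where the kernel is nonsingular. Your dyadic splitting gives a single, self-contained proof covering all $\alpha>0$ at once, with the $\alpha\geq 1$ estimate arising as the degenerate case in which the split at $s=t/2$ is unnecessary. A second, minor simplification is at the boundary: the paper introduces auxiliary cutoffs $\phi_n$ and passes to the limit, whereas your observation that extending $f$ by zero outside $(0,T)$ preserves the $L_q(w_2\,dt)$-norm makes the cutoffs superfluous. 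Both routes rely on the same key ingredient, the Muckenhoupt maximal theorem; yours is the more elementary and more transparent one.
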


\begin{proof}
For the case $\alpha\in(0,1)$,  see \cite[Lemma 5.5 (a)]{dong2021lp}. Now assume that $\alpha\geq 1$. Then for each $t\leq T$, we have
\begin{equation}\label{eqn 01.28.14:37}
\begin{aligned}
|I^{\alpha}_{t}f(t)| &\leq \frac{1}{\Gamma(\alpha)}\int_{0}^{t}|t-s|^{\alpha-1} |f(s)| ds
\\
& \leq \frac{1}{\Gamma(\alpha)} \int_{0}^{t} |t|^{\alpha-1}|f(s)|ds \leq \frac{T^{\alpha}}{\Gamma(\alpha)} \frac{2}{2t}\int_{0}^{2t}|f(s)|ds
\\
& \leq \frac{2T^{\alpha}}{\Gamma(\alpha)} \mathcal{M}f (t),
\end{aligned}
\end{equation}
where $\mathcal{M}f$ is the Hardy-Littlewood maximal function defined as
$$
\mathcal{M}f(t)=\sup_{t\in(a,b)}\aint_{a}^{b}|f(s)|ds.
$$
Extend $f=0$ ouside of $(0,T)$ and take $\phi_{n}\in \Ccinf(\bR)$ such that $0\leq \phi_{n} \leq 1$, $\phi(t)=1$ for $0\leq t \leq T$, and $\phi_{n}(t)=0$ for $t\leq -1/n$ or $t\geq T+1/n$. Since $f\phi_{n}=f$ on $(0,T)$, by \eqref{eqn 01.28.14:37} and Remark \ref{rmk 06.20.1}, we have
\begin{equation*}
\begin{aligned}
\|I^{\alpha}_{t}f\|_{L_{q}((0,T),w_{2}dt)}&=\|I^{\alpha}_{t}(f\phi_{n})\|_{L_{q}((0,T),w_{2}dt)}
\\
& \leq T^{\alpha} N \|\mathcal{M}(f\phi_{n})\|_{L_{q}((0,T),w_{2}dt)} \leq T^{\alpha} N \|\mathcal{M}(f\phi_{n})\|_{L_{q}(\bR,w_{2}dt)}
\\
&\leq T^{\alpha} N \|f\phi_{n}\|_{L_{q}(\bR,w_{2}dt)} 
\\
&\leq  T^{\alpha} N \|f\phi_{n}\|_{L_{q}((-1/n,T+1/n),w_{2}dt)}.
\end{aligned}
\end{equation*}
Letting $n\to \infty$, we get the desired result. The lemma is proved.
\end{proof}

Now we introduce our solution space and related facts.

\begin{definition}\label{def 05.07.2}
Let $0<\alpha<2$, $1<p,q<\infty$, $w_{1}\in A_{p}(\R^{d})$, $w_{2}\in A_{q}(\R)$, $\gamma\in \mathbb{R}$ and $T<\infty$. 

(i) We write $u\in \bH^{\alpha,\gamma+2}_{q,p}(w_{2},w_{1},T)$ if there exists a sequence $u_{n}\in C^{\infty} ([0,\infty)\times\R^{d})$ such that $u_{n}$ converges to $u$ in $\bH^{\gamma+2}_{q,p}(w_{2},w_{1},T)$, and $\partial^{\alpha}_{t}u_{n}$ is a Cauchy sequence in $\bH^{\gamma}_{q,p}(w_{2},w_{1},T)$. In this case, we say that $u_{n}$ is a defining sequence of $u$ and we write 
$$
\partial^{\alpha}_{t}u=\lim_{n\to\infty}\partial^{\alpha}_{t}u_{n}.
$$

(ii) For $u\in \bH^{\alpha,\gamma+2}_{q,p}(w_{2},w_{1},T)$, we write  $u\in \bH^{\alpha,\gamma+2}_{q,p,0}(w_{2},w_{1},T)$ if there is a defining sequence $u_{n}\in C^{\infty}([0,\infty)\times\R^{d})$ such that 
$$
u_n(0,x)=1_{\alpha>1}\partial_{t}u_{n}(0,x)=0.
$$
\end{definition}

\begin{lemma}\label{lem 05.08.1}
Let $1<p,q<\infty$, $w_{1}\in A_{p}(\R^{d})$, $w_{2}\in A_{q}(\R)$, $\gamma\in\bR$, and $T<\infty$.
 
(i) The spaces $\bH^{\alpha,\gamma+2}_{q,p}(w_{2},w_{1},T)$ and  $\bH^{\alpha,\gamma+2}_{q,p,0}(w_{2},w_{1},T)$ are Banach spaces with respect to the norm
\begin{equation*}
\|u\|_{\bH^{\alpha,\gamma+2}_{q,p}(w_{2},w_{1},T)}:=\|u\|_{\bH^{\gamma+2}_{q,p}(w_{2},w_{1},T)}+\|\partial^{\alpha}_{t}u\|_{\bH^{\gamma}_{q,p}(w_{2},w_{1},T)}.
\end{equation*}

(ii) Let $\nu\in\bR$. Then the operator $(1-\Delta)^{\nu/2}$ is an isometry from $\bH^{\alpha,\gamma+2}_{q,p}(w_{2},w_{1},T)$ to $\bH^{\alpha,\gamma-\nu+2}_{q,p}(w_{2},w_{1},T)$, and from $\bH^{\alpha,\gamma+2}_{q,p,0}(w_{2},w_{1},T)$ to $\bH^{\alpha,\gamma-\nu+2}_{q,p,0}(w_{2},w_{1},T)$ respectively. Moreover, $\partial^{\alpha}_{t}(1-\Delta)^{\nu/2}u=(1-\Delta)^{\nu/2}\partial^{\alpha}_{t}u$.

(iii) $C^{\infty}_c((0,\infty)\times \bR^d)$ is dense in $\bH^{\alpha,\gamma+2}_{q,p,0}(w_{2},w_{1},T)$. 

(iv) Suppose that $u\in\mathbb{H}^{\alpha,\gamma+2}_{q,p,0}(w_{2},w_{1},T)$, and let $\partial^{\alpha}_{t}u=f$. Then for any $t\leq T$ we have
\begin{equation}\label{eqn 12.21.16:05}
\|u\|^{q}_{\mathbb{H}^{\gamma}_{p}(w_{2},w_{1},t)} \leq t^{\alpha q} N  \|f\|^{q}_{\mathbb{H}^{\gamma}_{q,p}(w_{2},w_{1},t)}
\end{equation}
where the constant $N$ depends only on $\alpha,q,[w_{2}]_{q}$.
\end{lemma}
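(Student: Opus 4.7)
The proof decomposes into four parts, with (i)--(iii) being standard functional-analytic bookkeeping and (iv) being the one substantive computation, which reduces to Lemma~\ref{lem 12.21.16:37} via formula \eqref{eqn 10.1}.

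For (i), I would prove completeness directly. Given a Cauchy sequence $(u_k)$ in the graph norm, the component $u_k$ is Cauchy in $\bH^{\gamma+2}_{q,p}(w_{2},w_{1},T)$ with limit $u$, and $\partial^{\alpha}_{t}u_k$ is Cauchy in $\bH^{\gamma}_{q,p}(w_{2},w_{1},T)$ with some limit $g$, since these target spaces are Banach (Bochner spaces with reflexive targets, via Remark~\ref{rmk 05.08.1}(i)). For each $k$ pick a defining sequence $u_{k,n} \in C^{\infty}([0,\infty)\times\bR^{d})$, and extract a diagonal subsequence $u_{k,n(k)}$ with $u_{k,n(k)} \to u$ in $\bH^{\gamma+2}_{q,p}$ and $\partial^{\alpha}_{t}u_{k,n(k)} \to g$ in $\bH^{\gamma}_{q,p}$. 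This witnesses $u \in \bH^{\alpha,\gamma+2}_{q,p}$ with $\partial^{\alpha}_{t}u = g$. For the zero-initial subspace, the same diagonal argument applies with $u_{k,n}(0,\cdot) = 1_{\alpha>1}\partial_{t}u_{k,n}(0,\cdot) = 0$, which is preserved under the choice.

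For (ii), the point is that $(1-\Delta)^{\nu/2}$ acts in $x$ only and therefore formally commutes with $\partial^{\alpha}_{t}$. Start with a defining sequence $u_n$ of $u$, replace it by a spatially mollified and cutoff version $\tilde u_n$ whose profile in $x$ is Schwartz for each $t$, using Remark~\ref{rmk 05.08.1}(ii) to show $\tilde u_n$ is still a defining sequence; then $(1-\Delta)^{\nu/2}\tilde u_n$ is smooth in $(t,x)$ and furnishes a defining sequence for $(1-\Delta)^{\nu/2}u$ in $\bH^{\alpha,\gamma-\nu+2}_{q,p}$. The isometry identity in the graph norm follows because $(1-\Delta)^{\nu/2}$ is an isometry on each $H^{s}_{p}(w_{1})$ by Remark~\ref{rmk 05.08.1}(i), applied at the regularity levels $\gamma+2$ and $\gamma$. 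The zero-initial property is untouched since the operator does not see $t=0$.

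For (iii), given $u \in \bH^{\alpha,\gamma+2}_{q,p,0}$ with defining sequence $u_n$ having vanishing initial data, I would approximate each $u_n$ in the graph norm through three successive steps: truncate spatially by $\chi_R(x)$ with $R\to\infty$; multiply by a smooth temporal cutoff $\psi_\delta(t)$ vanishing on $[0,\delta]$ and equal to $1$ on $[2\delta,T]$, which pushes the support into $(0,\infty)$; and finally mollify in $t$ to achieve full $C^{\infty}_{c}$ smoothness. Convergence of the $\partial^{\alpha}_{t}$-component is verified by working with the explicit Caputo formula via \eqref{Caputo_Riemann} applied to the smooth approximations. A diagonal argument then produces a $C^{\infty}_{c}((0,\infty)\times\bR^{d})$ sequence converging to $u$ in the graph norm. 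The subtle point is that the Caputo derivative is nonlocal in $t$, so one must track how $\partial^{\alpha}_{t}(\psi_\delta u_n)$ relates to $\psi_\delta\,\partial^{\alpha}_{t}u_n$; this is handled by working on the level of the smooth defining sequence where the identity is classical.

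Part (iv) is now essentially a one-line consequence of Lemma~\ref{lem 12.21.16:37}. Pick a defining sequence $u_n$ with $u_n(0,\cdot) = 1_{\alpha>1}\partial_{t}u_n(0,\cdot) = 0$ and $f_n := \partial^{\alpha}_{t}u_n \to f$ in $\bH^{\gamma}_{q,p}(w_{2},w_{1},T)$. Since $u_n \in C^{2}([0,T])$ in $t$ with vanishing initial data, formula \eqref{eqn 10.1} gives pointwise in $x$
\[
u_n(t,x) = I^{\alpha}_{t}f_n(t,x).
\]
Apply $(1-\Delta)^{\gamma/2}$ in $x$ (which commutes with $I^{\alpha}_{t}$), take the $L_{p}(w_{1})$ norm in $x$, and use Minkowski's integral inequality to obtain
\[
\|u_n(t,\cdot)\|_{H^{\gamma}_{p}(w_{1})} \leq I^{\alpha}_{t}\bigl(\|f_n(s,\cdot)\|_{H^{\gamma}_{p}(w_{1})}\bigr)(t).
\]
Raise to the $q$-th power, integrate against $w_{2}(s)\,ds$ on $(0,t)$, and apply Lemma~\ref{lem 12.21.16:37} to the scalar function $s \mapsto \|f_n(s,\cdot)\|_{H^{\gamma}_{p}(w_{1})}$; this produces the bound \eqref{eqn 12.21.16:05} for $u_n$ with constant $t^{\alpha q}N$ depending only on $\alpha,q,[w_{2}]_{q}$. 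Letting $n\to\infty$ along the defining sequence yields the estimate for $u$ and $f$. The main obstacle is the careful bookkeeping in (iii) linking temporal cutoffs with the nonlocal Caputo derivative; once that is in place, the remaining parts are straightforward.
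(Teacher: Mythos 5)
Parts (i), (ii), and (iv) of your proposal are correct and essentially follow the paper's own approach. For (ii) you mollify spatially and use that $(1-\Delta)^{\nu/2}$ acts only in $x$, which is what the paper does; for (iv) both you and the paper pass to a smooth defining sequence with vanishing initial data, use \eqref{eqn 10.1} to write $u_n = I^{\alpha}_{t}f_n$, commute $(1-\Delta)^{\gamma/2}$ with $I^{\alpha}_{t}$, apply Minkowski's inequality in $x$, and then invoke Lemma~\ref{lem 12.21.16:37}.

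Your treatment of (iii), however, has a genuine gap. You insert a temporal cutoff $\psi_{\delta}(t)$ vanishing on $[0,\delta]$ and claim the commutator is ``handled by working on the level of the smooth defining sequence where the identity is classical.'' There is no such classical identity: the Caputo derivative is nonlocal in $t$ and does not commute with multiplication by $\psi_{\delta}$. To make your argument work you would have to prove $\partial^{\alpha}_{t}(\psi_{\delta}u_n) \to \partial^{\alpha}_{t}u_n$ in $\bH^{\gamma}_{q,p}(w_{2},w_{1},T)$ as $\delta \downarrow 0$, which amounts to estimating terms like $I^{n-\alpha}_{t}(\psi_{\delta}'\,u_n)$ and the commutator $[\psi_{\delta}, I^{n-\alpha}_{t}]\partial_{t}^{n}u_n$; the factor $\psi_{\delta}'$ blows up like $\delta^{-1}$ and the weight $w_{2}$ near $t=0$ complicates the analysis. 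None of this is supplied. The paper avoids the problem entirely by a different route: it applies the isometry of part (ii) to pass to $v = (1-\Delta)^{\gamma/2}u \in \bH^{\alpha,2}_{q,p,0}(w_{2},w_{1},T)$, invokes the already known density result for $\gamma = 0$ to obtain $v_n \in C^{\infty}_c((0,\infty)\times\bR^{d})$, then sets $\bar u_n = (1-\Delta)^{-\gamma/2}v_n$. Since $(1-\Delta)^{-\gamma/2}$ acts purely in $x$, each $\bar u_n$ retains compact support in $t$ automatically, and the only remaining truncation is spatial (by $\zeta(x/k)$), which does commute with $\partial^{\alpha}_{t}$ and is controlled using that $\bar u_n$ lies in $\bH^{\alpha,m}_{q,p}$ for every integer $m$. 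Thus no temporal cutoff near $t=0$ ever appears. To close your proof you would either need to supply the commutator estimate, or adopt the paper's reduction through the case $\gamma=0$.
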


\begin{proof}
(i) It can be readily proved by following a straightforward argument. 

(ii) Suppose that $u\in \mathbb{H}^{\gamma+2}_{q,p}(w_{2},w_{1},T)$, and let $u_{n}$ be a defining sequence of $u$. It is easy to see that $(1-\Delta)^{\nu/2}u_{n}$ converges to $(1-\Delta)^{\nu/2}u$ in $\mathbb{H}^{\gamma-\nu+2}_{q,p}(w_{2},w_{1},T)$, and $\partial^{\alpha}_{t}(1-\Delta)^{\nu/2}u_{n}=(1-\Delta)^{\nu/2}\partial^{\alpha}_{t}u_{n}$ converges to $(1-\Delta)^{\nu/2}\partial^{\alpha}_{t}u$ in $\mathbb{H}^{\gamma-\nu}_{q,p}(w_{2},w_{1},T)$. Let $(1-\Delta)^{\nu/2}u_{n}=v_{n}$, and define $(v_{n})^{\varepsilon}$ as in Remark \ref{rmk 05.08.1} (ii). Then it follows that $(v_{n})^{\varepsilon}\in C^{\infty}([0,\infty)\times\bR^{d})$, $(\partial^{\alpha}_{t}v_{n})^{\varepsilon}=\partial^{\alpha}_{t}(v_{n})^{\varepsilon}$, and $(v_{n})^{\varepsilon}$ converges to $v_{n}$ in $\mathbb{H}^{\gamma-\nu+2}_{q,p}(w_{2},w_{1},T)$ as $\varepsilon\downarrow 0$. Therefore, by taking suitable sequence $\varepsilon_{n}>0$ such that $\varepsilon_{n} \downarrow 0$,  we have $(v_{n})^{\varepsilon_{n}}$ converges to $(1-\Delta)^{\nu/2}u$ in $\mathbb{H}^{\alpha,\gamma-\nu+2}_{q,p}(w_{2},w_{1},T)$, and $\partial^{\alpha}_{t}(1-\Delta)^{\nu/2}u=(1-\Delta)^{\nu/2}\partial^{\alpha}_{t}u$ by definition of $\mathbb{H}^{\alpha,\gamma-\nu+2}_{q,p}(w_{2},w_{1},T)$. Finally, from the above argument, we have
\begin{equation*}
\begin{aligned}
\|(1-\Delta)^{\nu/2}u\|_{\mathbb{H}^{\alpha,\gamma-\nu+2}_{q,p}(w_{2},w_{1},T)} &= \lim_{n\to\infty}\|(v_{n})^{\varepsilon_{n}}\|_{\mathbb{H}^{\alpha,\gamma-\nu+2}_{q,p}(w_{2},w_{1},T)} 
\\
&= \lim_{n\to\infty}\|u_{n}\|_{\mathbb{H}^{\alpha,\gamma+2}_{q,p}(w_{2},w_{1},T)}
\\
&= \|u\|_{\mathbb{H}^{\alpha,\gamma+2}_{q,p}(w_{2},w_{1},T)}.
\end{aligned}
\end{equation*} 
The assertion for $u\in \mathbb{H}^{\alpha,\gamma+2}_{q,p,0}(w_{2},w_{1},T)$ can be proved similarly by adding the condition $u_{n}(0,x)=1_{\alpha>1}\partial_{t}u_{n}(0,x)=0$.

(iii) If $\gamma=0$, then the statement is proved in \cite[Lemma 2.6 (ii)]{han19timefractionalAp}. For general case suppose that $u\in\mathbb{H}^{\alpha,\gamma+2}_{q,p,0}(w_{2},w_{1},T)$ and let $v=(1-\Delta)^{\gamma/2}u$. Then by (ii), we have $v\in \mathbb{H}^{\alpha,2}_{q,p,0}(w_{2},w_{1},T)$. Take $v_{n}\in\Ccinf((0,\infty)\times\bR^{d})$ which converges to $v$ in $\mathbb{H}^{\alpha,2}_{q,p,0}(w_{2},w_{1},T)$, and let $\bar{u}_{n}=(1-\Delta)^{-\gamma/2}v_{n}$. Then $\bar{u}_{n}\in C^{\infty}((0,\infty)\times\bR^{d})$, and $\bar{u}_{n}$ converges to $u$ in $\mathbb{H}^{\alpha,\gamma+2}_{q,p}(w_{2},w_{1},T)$. Moreover, for each $n$, $\bar{u}_{n}\in \mathbb{H}^{\alpha,m}_{q,p}(w_{2},w_{1},T)$ for any $m=1,2\dots$.  Therefore, since $\bar{u}_{n}$ has a compact support in $t$, if we take a function $\zeta\in\Ccinf(\bR^{d})$ such that $\zeta=1$ on $B_{1}$, and $\zeta=0$ outside of $B_{2}$, then $u_{n,k}(t,x)=\zeta(x/k)\bar{u}_{n}(t,x)\in\Ccinf((0,\infty)\times\bR^{d})$ converges to $\bar{u}_{n}$ in $\mathbb{H}^{\alpha,m}_{q,p}(w_{2},w_{1})$ for any $m=1,2,\dots$ as $k\to\infty$. Therefore, by taking appropriate subsequence $k_{n}\to \infty$ (recall Remark \ref{rmk 05.08.1} (i)), we have the desired result.

(iv) Take $u_{n}\in\Ccinf((0,\infty)\times\bR^{d})$ which converges to $u$ in $\mathbb{H}^{\alpha,\gamma+2}_{q,p}(w_{2},w_{1},T)$, and let $\partial^{\alpha}_{t}u_{n}=f_{n}$. First, note that
$$
(1-\Delta)^{\gamma/2}u_{n}(s,x)=N(\alpha)\int_{0}^{s}(s-r)^{\alpha-1}(1-\Delta)^{\gamma/2}f_{n}(r,x)dr
$$
for all $(s,x)\in[0,\infty)\times\bR^{d}$. Applying Minkowski's inequality, we have
\begin{equation*}
\begin{aligned}
\|u_{n}(s,\cdot)\|_{H^{\gamma}_{p}(w_{1})} \leq N \int_{0}^{s} (s-r)^{\alpha-1}\|f_{n}(r,\cdot)\|_{H^{\gamma}_{p}(w_{1})} dr \quad \forall \, s\in [0,\infty),
\end{aligned}
\end{equation*}
where the constant $N$ does not depend on $s$. By \eqref{eqn 12.22.16:40}, we have
\begin{equation*}
\begin{aligned}
\|u_{n}\|^{q}_{\mathbb{H}^{\gamma}_{p}(w_{2},w_{1},t)} &= N \int_{0}^{t} \left( \int_{0}^{s} (s-r)^{\alpha-1}\|f_{n}(r,\cdot)\|_{H^{\gamma}_{p}(w_{1})} dr \right)^{q} w_{2}(s)ds
\\
&= N \|I^{\alpha}_{t}(\|f_{n}\|_{H^{\gamma}_{p}(w_{1})})(\cdot)\|^{q}_{L_{q}((0,t),w_{2}dt)}
\\
&= N t^{\alpha  q} \|f_{n}\|^{q}_{\mathbb{H}^{\gamma}_{q,p}(w_{2},w_{1},t)},
\end{aligned}
\end{equation*}
where the constant $N$ depends only on $\alpha,q,[w_{2}]_{q}$. Hence, by letting $n\to\infty$. we have \eqref{eqn 12.21.16:05}. The lemma is proved. 
\end{proof}

\begin{remark}\label{rmk 01.14.13:13}
Note that the construction of $v_{n}$ in Lemma \ref{lem 05.08.1} (iii) comes from \cite[Theorem 2.7 (iii)]{kim17timefractionalpde}, and from this one can observe that functions $v_{n}\in\Ccinf((0,\infty)\times\bR^{d})$ are given by
$$
\eta(t)\eta_{3}(c_{n}x)\int_{0}^{\infty}\eta_{1}((t-s)/a_{n})\int_{\bR^{d}}((1-\Delta)^{\gamma/2}u(s,y))\eta_{2}((x-y)/b_{n})dyds,
$$
where $\eta\in C^{\infty}([0,\infty))$ such that $\eta(t)=1$ for $t\leq T$ and $\eta(t)=0$ for all $t>T+1$,
$$
\eta_{1}\in \Ccinf((1,2)), \quad \eta_{2},\eta_{3}\in\Ccinf(\bR^{d}),
$$
and $a_{n},b_{n},c_{n}>0$ are sequences converge to $0$. Therefore, if we have $u,v\in \mathbb{H}^{\alpha,\gamma+2}_{q,p,0}(w_{2},w_{1},T)$ such that $u(t)=v(t)$ for $t<T_{0}$ for some $T_{0}<T$, then one can take a defining sequences $u_{n}$ of $u$ and $v_{n}$ of $v$ in $\Ccinf((0,\infty)\times\bR^{d})$ such that 
$$
u_{n}(t)=v_{n}(t) \quad \text{in} \quad H^{\gamma+2}_{p}(w_{1}) \quad \forall\, t<T_{0}+a_{n}.
$$
\end{remark}

To introduce assmuptions on the coefficients, for each $r>0$ take $\kappa'\in[0,1)$ as follows;
\begin{enumerate}[(i)]
\item
if $r=1,2,\dots,$ then take $\kappa'=0$,
\item
if $r\neq 1,2,\dots$, then take $\kappa'\in(0,1)$ such that $r+\kappa'\neq 1,2,\dots$. 
\end{enumerate}
For $r\geq0$ set
\begin{equation}\label{eqn 01.06.14:42}
B^{r}=\begin{cases} L_{\infty}(\R^{d}) &\mbox{if}\quad r=0
\\
C^{r-1,1}(\R^{d}) & \mbox{if}\quad r=1,2,3,\dots
\\
C^{r+\kappa'}(\R^{d})&\mbox{otherwise},
\end{cases}
\end{equation}
where $C^{r+\kappa'}(\R^{d})=C^{r+\kappa'}$ is H\"older space and $C^{r-1,1}(\R^{d})=C^{r-1,1}$ is the space of $(r-1)$-times differentiable functions whose $(r-1)$-th derivatives are Lipschitz continuous. 

\begin{remark}\label{rmk 01.06.14:48}
Let $r>0$ and let $r=[r]_{-}+\{r\}^{+}$, where $0\leq [r]_{-}<r$ is an integer and $0<\{r\}^{+}\leq 1$. For example, if $r=1$, then $[r]_{-}=0$, and $\{r\}^{+}=1$. We define Zygmund space $\mathcal{C}^{r}(\bR^{d})=\mathcal{C}^{r}$, which consists of continuous functions $h$ satisfying
\begin{equation*}
\begin{aligned}
\|h\|_{\mathcal{C}^{r}} &:= \sum_{0\leq |\beta|\leq [r]_{-}} \sup_{x\in\bR^{d}}|D^{\beta}h(x)|
\\
&\quad + \sum_{|\beta|=[r]_{-}} \sup_{y\neq 0,y\in \bR^{d}}\frac{|D^{\beta}h(x+2y)-2D^{\beta}h(x+y)+D^{\beta}h(x)|}{|y|^{\{r\}^{+}}} <\infty.
\end{aligned}
\end{equation*}
It is well-known that if  $r\neq1,2,\dots$, then $\mathcal{C}^{r}$, and $C^{r}$ are equivalent, and if $r=1,2\dots$, then $\mathcal{C}^{r}$ is broader space than $C^{r}$. Moreover, we can easily check that for any $\varepsilon>0$, $\mathcal{C}^{r+\varepsilon}\subset C^{r-1,1}$. Finally, note that $\mathcal{C}^{r}=B^{r}_{\infty,\infty}$ for all $r>0$, where $B^{r}_{\infty,\infty}$ is Besov space (see e.g. \cite[Section 2.3.5]{triebel1983}).
\end{remark}

\begin{assumption}\label{asm 07.16.1}

(i) $a^{ij}(t,x),b^{i}(t,x),c(t,x)$ are  $\cB(\R^{d+1})$-measurable functions.

(ii) There exists a constant $0<\delta<1$ so that for any $(t,x)$
\begin{equation}\label{eqn 07.22.2}
\begin{gathered}
\delta|\xi|^{2}\leq a^{ij}(t,x)\xi^{i}\xi^{j}\leq \delta^{-1}|\xi|^{2},\quad \forall\xi\in\R^{d}.
\end{gathered}
\end{equation}

(iii) The coefficients $a^{ij}(t,x)$ is uniformly continuous in $(t,x)$.

(iv) For each $t$,
\begin{equation*}
|a^{ij}(t,\cdot)|_{B^{|\gamma|}} + |b^{i}(t,\cdot)|_{B^{|\gamma|}}+ |c(t,\cdot)|_{B^{|\gamma|}} \leq \delta^{-1}.
\end{equation*}

(v) If $p\neq q$, then $\lim_{|x|\to\infty}a^{ij}(t,x)$ exists uniformly in $t\in(0,T)$.

\end{assumption}

Finally, we introduce our main result.

\begin{theorem} \label{theorem 5.1}
Let $0<\alpha<2$, $1<p,q<\infty$, $\gamma\in\bR$, $w_{1}=w_{1}(x)\in A_{p}(\R^{d})$, $w_{2}=w_{2}(t)\in A_{q}(\R)$, and $T<\infty$. Suppose that Assumption \ref{asm 07.16.1} holds. Then for any $f\in \bH^{\gamma}_{q,p}(w_{2},w_{1},T)$, there exists a unique solution $u\in\mathbb{H}^{\alpha,\gamma+2}_{q,p,0}(w_{2},w_{1},T)$ for the equation
\begin{equation} \label{eqn 05.09.1}
\partial^{\alpha}_{t}u=a^{ij}u_{x^{i}x^{j}}+b^{i}u_{x^{i}}+cu +f, \quad t>0\,; \quad u(0,\cdot)=1_{\alpha>1}\partial_{t}u(0,\cdot)=0.
\end{equation} 
Moreover, the solution $u$ satisfies
\begin{equation} \label{eqn 05.08.1-1}
\|u\|_{\bH^{\alpha,\gamma+2}_{q,p}(w_{2},w_{1},T)}\leq N\|f\|_{\bH^{\gamma}_{q,p}(w_{2},w_{1},T)},
\end{equation}
where the constant $N$ depends only on $\alpha,d,p,q,\gamma,\delta,w_{1},w_{2}$, and $T$.
\end{theorem}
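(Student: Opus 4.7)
The plan is to run the standard perturbation / freezing-of-coefficients argument in the vein of \cite{kim17timefractionalpde, kry99analytic}, starting from a known constant-coefficient estimate and upgrading to variable coefficients via the two technical tools developed in Section~3: the pointwise multiplier theorem in $H^{\gamma}_{p}(w_{1})$ (a consequence of the complex interpolation of weighted Sobolev spaces) and the uniform localization theorem for those spaces. As usual, existence and uniqueness both reduce to the a priori estimate \eqref{eqn 05.08.1-1}: uniqueness follows by applying the estimate to the difference of two solutions, and existence follows by the method of continuity applied to the family $L_{\lambda}:=(1-\lambda)(\partial^{\alpha}_{t}-\Delta)+\lambda L$, $\lambda\in[0,1]$, where $L$ is the operator on the right-hand side of \eqref{eqn 05.09.1}, once the estimate is known uniformly in $\lambda$ for this family. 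Hence the real work is the a priori estimate.

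I would build the estimate in stages. \emph{Model case.} For $\partial^{\alpha}_{t}u=\Delta u+f$ the weighted $L_{q}(L_{p})$-estimate with $\gamma=0$ follows from the existing weighted mixed-norm theory, and the case of general $\gamma\in\bR$ is then immediate from the commutation $\partial^{\alpha}_{t}(1-\Delta)^{\gamma/2}=(1-\Delta)^{\gamma/2}\partial^{\alpha}_{t}$ in Lemma~\ref{lem 05.08.1}(ii) upon passing to $v=(1-\Delta)^{\gamma/2}u$. \emph{Constant $a^{ij}_{0}$.} For a constant symmetric positive-definite matrix satisfying \eqref{eqn 07.22.2}, the change of variables $x\mapsto\Sigma x$ with $\Sigma$ a square root of $(a^{ij}_{0})$ reduces to the Laplacian; by Remark~\ref{rmk 09.27.21:52}(i) the transformed weight $w_{1}(\Sigma\,\cdot\,)$ is again in $A_{p}(\bR^{d})$ with comparable $A_{p}$-constant, so the estimate survives with a constant depending only on $\delta$. \emph{Variable coefficients.} Freezing at $(t_{0},x_{0})$ gives
\begin{equation*}
\partial^{\alpha}_{t}u - a^{ij}(t_{0},x_{0})u_{x^{i}x^{j}} = \bigl(a^{ij}(t,x)-a^{ij}(t_{0},x_{0})\bigr)u_{x^{i}x^{j}} + b^{i}u_{x^{i}} + cu + f.
\end{equation*}
By Assumption~\ref{asm 07.16.1}(iii), for any $\epsilon>0$ one picks a radius $r$ so that the coefficient difference is bounded by $\epsilon$ on a parabolic cylinder of size $r$ around $(t_{0},x_{0})$; applying the constant-coefficient estimate to $\zeta u$ for a cutoff $\zeta$ supported there, the pointwise multiplier theorem together with Assumption~\ref{asm 07.16.1}(iv) bounds $\|(a^{ij}-a^{ij}(t_{0},x_{0}))(\zeta u)_{x^{i}x^{j}}\|_{H^{\gamma}_{p}(w_{1})}$ by $\epsilon\,\|(\zeta u)_{x^{i}x^{j}}\|_{H^{\gamma}_{p}(w_{1})}$ plus strictly lower-order terms. \emph{Patching.} A partition of unity subordinated to a locally finite cover by such cylinders, combined with the uniform localization theorem of Section~3, assembles the local bounds into a global one, with the $\epsilon$-factor absorbed into the left-hand side. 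The lower-order terms $b^{i}u_{x^{i}}$, $cu$, and the commutators produced by $\zeta$ are handled via a standard interpolation inequality $\|u\|_{H^{\gamma+1}_{p}(w_{1})}\leq\epsilon\|u\|_{H^{\gamma+2}_{p}(w_{1})}+N(\epsilon)\|u\|_{H^{\gamma}_{p}(w_{1})}$ (itself a consequence of the complex interpolation result) combined with the small-time bound \eqref{eqn 12.21.16:05} of Lemma~\ref{lem 05.08.1}(iv). This yields the estimate on a short interval $[0,T_{0}]$ with constant independent of $T_{0}$, and iteration via Remark~\ref{rmk 01.14.13:13} — which lets one concatenate defining sequences across adjacent subintervals while staying in $\bH^{\alpha,\gamma+2}_{q,p,0}(w_{2},w_{1},T)$ — extends it to arbitrary $T<\infty$.

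The main obstacle I anticipate is the freezing step in the mixed-norm case $p\neq q$. The usual freezing argument implicitly relies on translation invariance, which fails for a weighted norm. This is exactly the reason for Assumption~\ref{asm 07.16.1}(v): subtracting $a^{ij}_{\infty}(t):=\lim_{|x|\to\infty}a^{ij}(t,x)$ recasts the variable coefficient as a perturbation that is uniformly small outside a large ball, so the freezing can be organized in a way compatible with $w_{1}$, whose $A_{p}$-constant is translation-invariant by Remark~\ref{rmk 09.27.21:52}(i). The second non-routine point is the pointwise multiplier estimate itself: for non-integer $\gamma$ there is no elementary product rule available in the weighted setting, and the entire perturbation scheme genuinely rests on the complex-interpolation machinery of Section~3.
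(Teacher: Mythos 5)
Your broad outline — reduce to a constant-coefficient model, freeze coefficients, use the weighted pointwise multiplier theorem (via complex interpolation) and the weighted uniform localization theorem to patch, obtain a short-time estimate and then iterate — is indeed the scaffold the paper uses, and you correctly identify the two Section~3 ingredients as the main innovations. But there are genuine gaps in the perturbation step.

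First, and most importantly, freezing at $(t_{0},x_{0})$ does \emph{not} make the perturbation small in the norm you need. Lemma~\ref{lem 01.06.16:05} gives $\|a\,v\|_{H^{\gamma}_{p}(w_{1})}\leq N\|a\|_{B^{|\gamma|}}\|v\|_{H^{\gamma}_{p}(w_{1})}$, and for $a=a^{ij}-a^{ij}(t_{0},x_{0})$ on a small cylinder only the $L_{\infty}$ part of $\|a\|_{B^{|\gamma|}}$ shrinks with the radius; the H\"older/Lipschitz seminorm stays of size $\delta^{-1}$ from Assumption~\ref{asm 07.16.1}(iv), so the multiplier constant is not small and your ``$\epsilon\|(\zeta u)_{x^{i}x^{j}}\|$ plus lower-order terms'' claim is unjustified as written. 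The paper fixes this with the parabolic rescaling $u_{r}(t,x)=u(r^{2/\alpha}t,rx)$, $a^{ij}_{r}(t,x)=a^{ij}(r^{2/\alpha}t,rx)$: this shrinks the seminorm of $a^{ij}_{r}(t,\cdot)-a^{ij}_{r}(t,0)$ by a factor $r^{|\gamma|\wedge 1}$ while leaving the $A_{p}$-constants unchanged (Remark~\ref{rmk 09.27.21:52}(i)). Without this scaling step the perturbation cannot be absorbed for non-integer $\gamma$. Second, the ``parabolic cylinder'' partition cannot be taken literally: Theorem~\ref{thm 01.25.17:39} localizes only in $x$, and the paper never truncates in $t$ — indeed one should not, since the Caputo derivative carries memory back to $t=0$. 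Time is handled instead by the short-time contraction of Lemma~\ref{lem 01.12.17:45} (which exploits the $t^{\alpha}$ factor in Lemma~\ref{lem 05.08.1}(iv)) followed by the shift-and-extend iteration of Lemma~\ref{lem 01.12.12:38} and Remark~\ref{rmk 01.14.13:13}; your sketch has the right pieces scattered at the end but the cylinder cover is not what glues them. Finally, the role of Assumption~\ref{asm 07.16.1}(v) is not translation invariance — the $A_{p}$-constants \emph{are} translation invariant, as you yourself note. The obstruction when $p\neq q$ is that Theorem~\ref{thm 01.25.17:39} produces an $\ell^{p}$-sum over the partition index at each fixed $t$, and passing this through the $L_{q}$ time integral is a Fubini step that only works for $p=q$ when the sum is infinite; Assumption~\ref{asm 07.16.1}(v) lets the paper use a \emph{finite} partition $\zeta_{0},\dots,\zeta_{M}$, for which $\ell^{p}$- and $\ell^{q}$-sums are trivially comparable.
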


\begin{remark}
The presence of weights in estimates makes one to find other approaches to prove Theorem \ref{theorem 5.1} even for $\partial^{\alpha}_{t}u=\Delta u +f$ with $p=q=2$ and $\gamma=0$.

Indeed, by \cite{benedetto1992} for any $h\in \cS(\bR)$, we have
$$
\int_{\bR}|\mathcal{F}(h)(\xi)|^{2}w_{1}(1/\xi) d\xi\leq N \int_{\bR}|h(x)|^{2}w_{1}(x)dx
$$
if and only if $w\in A_{2}(\bR)$ is an even function on $\bR$ increasing on $(0,\infty)$. Hence, if we assume the Parseval's identity for $L_{2}(w_{1})$, then we have
$$
\int_{\bR}|\mathcal{F}(h)(\xi)|^{2}w_{1}(1/\xi) d\xi\leq N \int_{\bR}|\mathcal{F}(h)(\xi)|^{2}w_{1}(\xi)d\xi.
$$
If we take $w_{1}(\xi)=|\xi|^{1/2}\in A_{2}(\bR)$, then we have contradiction from the above inequality and the arbitrariness of $h$. Therefore, the Parseval's identity does not hold in weighted $L_{2}$ space, and thus one cannot simply get an $L_{2}$-theory via Fourier transform.
\end{remark}

\mysection{Interpolation and Uniform localization theorem of weighted Sobolev spaces}

First of all, we introduce Littlewood-Paley operator. Let $\cS=\cS(\R^{d})$ be a space of Schwartz functions defined on $\R^{d}$, $\cD$ be a space of tempered distribution, and let $\Psi$ be a radial function in $\cS$ whose Fourier transform $\hat{\Psi}(\xi)$ has support in the strip $\{\xi\in\R^{d}|\frac{1}{2}\leq |\xi|\leq 2\}$, $\hat{\Psi}(\xi)>0$ for $\frac{1}{2}<\xi<2$, and satisfying
\begin{equation}\label{eqn 09.17.14:14}
\sum_{j\in\bZ}\hat{\Psi}(2^{-j}\xi)=1 \quad \textrm{for} \quad \xi\neq0.
\end{equation}
Define $\Psi_{j}$ and $\Psi_{0}$ by
\begin{equation}\label{eqn 09.17.14:15}
\begin{aligned}
\hat{\Psi}_{j}(\xi)&=\hat{\Psi}(2^{-j}\xi) \quad j=\pm1,\pm2,\dots ,
\\
\hat{\Psi}_{0}(\xi)&=1-\sum_{j=1}^{\infty}\hat{\Psi}_{j}(\xi).
\end{aligned}
\end{equation}
The following lemma is a weighted version of the Littlewood-Paley inequality. For the proof, see e.g. \cite[Theorem 1]{kurtz1980littlewood}.
\begin{lemma}\label{lem 09.17.14:13}
Let $1<p<\infty$, $w_{1}\in A_{p}(\bR^{d})$. For any $f\in L_{p}(w)$, it holds that
\begin{equation}\label{eqn 09.17.14:16}
\begin{gathered}
\left\|\left(\sum_{j\in\bZ}|\Psi_{j}\ast f|^{2}\right)^{1/2}\right\|_{L_{p}(w_{1})}\leq N_{1} \|f\|_{L_{p}(w_{1})}\leq N_{2} \left\|\left(\sum_{j\in\bZ}|\Psi_{j}\ast f|^{2}\right)^{1/2}\right\|_{L_{p}(w_{1})},
\end{gathered}
\end{equation}
where the constant $N_{1}$ and $N_{2}$ depend only on $d,p,w_{1}$, and $\Psi$.
\end{lemma}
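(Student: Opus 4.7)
The plan is to recognize the square function as the $\ell^2$-norm of a vector-valued singular integral operator and apply weighted Calderón-Zygmund theory. Define $Tf := (\Psi_j \ast f)_{j\in\bZ}$; then the first inequality in \eqref{eqn 09.17.14:16} says exactly that $T$ is bounded $L_p(w_1) \to L_p(w_1;\ell^2)$. I would first establish the unweighted $L_2$ bound. Since $\hat\Psi_j$ is supported in the annulus $\{2^{j-1}\leq|\xi|\leq 2^{j+1}\}$, each point $\xi\neq 0$ belongs to the support of at most three of the $\hat\Psi_j$'s, so $\sum_j|\hat\Psi_j(\xi)|^2\leq C$ pointwise. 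Plancherel then gives
$$
\|Tf\|_{L_2(\bR^{d};\ell^2)}^2=\sum_{j}\|\Psi_j\ast f\|_{L_2}^2=\int_{\bR^{d}}\Big(\sum_j|\hat\Psi_j(\xi)|^2\Big)|\hat f(\xi)|^2 d\xi\leq C\|f\|_{L_2}^2.
$$

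Next I would verify a vector-valued Hörmander condition for the $\ell^2$-valued kernel $\vec K(x):=(\Psi_j(x))_{j\in\bZ}$. Because $\Psi_j(x)=2^{jd}\Psi(2^j x)$ with $\Psi\in\cS$ radial and of zero integral (as $\hat\Psi$ vanishes at the origin), the standard scaling estimate yields $\|\vec K(x)\|_{\ell^2}\lesssim |x|^{-d}$ and $\|\nabla\vec K(x)\|_{\ell^2}\lesssim |x|^{-d-1}$, so that
$$
\int_{|x|\geq 2|y|}\|\vec K(x-y)-\vec K(x)\|_{\ell^2}\, dx \leq N
$$
uniformly in $y$. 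Combining the $L_2$ bound with this Hörmander estimate, the weighted vector-valued Calderón-Zygmund theorem—the extension of the Benedek-Calderón-Panzone result to scalar $A_p$ weights acting on a Hilbert-space valued target, which is precisely Kurtz's Theorem~1 in \cite{kurtz1980littlewood}—gives the first inequality of \eqref{eqn 09.17.14:16}.

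For the reverse inequality I would argue by duality. Choose a radial $\tilde\Psi\in\cS$ with $\hat{\tilde\Psi}$ supported in $\{1/4\leq|\xi|\leq 4\}$ and identically $1$ on $\{1/2\leq|\xi|\leq 2\}$, and set $\hat{\tilde\Psi}_j(\xi)=\hat{\tilde\Psi}(2^{-j}\xi)$; then $\hat\Psi_j=\hat{\tilde\Psi}_j\hat\Psi_j$, so $\Psi_j\ast f=\tilde\Psi_j\ast\Psi_j\ast f$. Using the resolution of identity \eqref{eqn 09.17.14:14}, for $f\in\cS$ with $\hat f$ vanishing near $0$ and any $g\in\cS$,
$$
(f,g)=\sum_j(\Psi_j\ast f,\tilde\Psi_j\ast g),
$$
and Cauchy-Schwarz followed by Hölder in the duality pairing $L_p(w_1)/L_{p'}(\tilde w_1)$ with $\tilde w_1=w_1^{-1/(p-1)}\in A_{p'}(\bR^{d})$ yields
$$
|(f,g)|\leq \Big\|\big(\sum_j|\Psi_j\ast f|^2\big)^{1/2}\Big\|_{L_p(w_1)}\Big\|\big(\sum_j|\tilde\Psi_j\ast g|^2\big)^{1/2}\Big\|_{L_{p'}(\tilde w_1)}.
$$
Applying the already-proved upper bound with $\tilde\Psi$ in place of $\Psi$ and the weight $\tilde w_1\in A_{p'}$ gives $\|(\sum|\tilde\Psi_j\ast g|^2)^{1/2}\|_{L_{p'}(\tilde w_1)}\leq N\|g\|_{L_{p'}(\tilde w_1)}$, and taking the supremum over such $g$ (together with a density argument to remove the low-frequency restriction on $f$, absorbing the $\Psi_0$ piece into a similar bound) produces the lower estimate.

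The main obstacle is really the vector-valued weighted Calderón-Zygmund step: the scalar weighted theory is classical, but one must verify that it extends to $\ell^2$-valued kernels while keeping the scalar $A_p$ assumption on $w_1$. Rather than reprove this extension I would invoke Kurtz's result as cited; the remaining work—the $L_2$ estimate, the Hörmander condition from Schwartz decay, and the duality argument for the lower bound—is then straightforward bookkeeping.
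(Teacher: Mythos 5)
Your proposal is correct in outline, but it takes a different route from the paper: the paper does not prove this lemma at all, it simply quotes it as \cite[Theorem 1]{kurtz1980littlewood}, whereas you reconstruct the standard proof (unweighted $L_2$ bound by Plancherel and finite overlap of the supports of $\hat\Psi_j$, pointwise size and gradient bounds for the $\ell^2$-valued kernel, a weighted vector-valued Calder\'on--Zygmund theorem for the upper estimate, and duality with $\tilde w_1=w_1^{-1/(p-1)}\in A_{p'}$ plus density for the lower estimate). This is essentially the same machinery the paper itself assembles for other purposes in Lemmas \ref{lem 09.17.14:42} and \ref{lem 09.17.16:14}, so your sketch buys self-containedness at the cost of redoing what the citation already covers; the duality step, the identity $\|f\|_{L_p(w_1)}=\sup\{|(f,g)|:\|g\|_{L_{p'}(\tilde w_1)}\le 1\}$, and the density argument (convolution approximation works in $L_p(w_1)$ exactly as in Remark \ref{rmk 05.08.1}(ii)) are all sound.

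Two cautions. First, as literally written you say that the $L_2$ bound ``combined with this H\"ormander estimate'' yields the weighted bound: the integral H\"ormander condition alone is \emph{not} sufficient for $A_p$-weighted estimates; one needs the pointwise standard-kernel bounds $\|\vec K(x)\|_{\ell^2}\lesssim |x|^{-d}$, $\|\vec K(x-y)-\vec K(x)\|_{\ell^2}\lesssim |y|/|x|^{d+1}$ (equivalently a $(D_r)$-type condition). You do derive these pointwise bounds, so the argument is salvageable verbatim, but the theorem must be invoked under those hypotheses, e.g. \cite[Theorem 1.6]{de1986calderon} or \cite[Corollary 9.4.7]{grafakos2009modern}, exactly as the paper does in Lemma \ref{lem 09.17.16:14} and in the proof of Theorem \ref{thm 01.25.17:39}. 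Second, your attribution is slightly circular: Theorem 1 of \cite{kurtz1980littlewood} \emph{is} the weighted Littlewood--Paley inequality being proved here, not a general weighted vector-valued Calder\'on--Zygmund theorem; the correct reference for that ingredient is \cite{de1986calderon}. With these substitutions your argument is a complete and legitimate alternative to the paper's citation.
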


\begin{lemma}\label{lem 09.17.14:42}
Suppose that $\Phi\in\cS$, and let $\Phi_{j}(x)=2^{jd}\Phi(2^{j}x)$ for $j\in\bZ$. Define $K_{j}(x,y)=\Phi_{j}(x-y)$. Then it holds that
\begin{equation*}
\begin{gathered}
\left|\sum_{j\in\bZ}|K_{j}(x,y)|^{2}\right|^{1/2}\leq \frac{N}{|x-y|^{d}},
\\
\left|\sum_{j\in\bZ}|K_{j}(x,y)-K(x',y)|^{2}\right|^{1/2}\leq \frac{N|x-x'|}{(|x-y|+|x'-y|)^{d+1}},
\end{gathered}
\end{equation*}
provided that $|x-x'|\leq \frac{1}{2}\max{\{|x-y|,|x'-y|\}}$, where the constant $N$ depends only on $d$, and $\Phi$.
\end{lemma}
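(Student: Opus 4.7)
The plan is to prove both inequalities by exploiting the Schwartz decay of $\Phi$ and splitting the sum over $j\in\bZ$ at the dyadic scale where $2^{j}$ is comparable to $|x-y|^{-1}$. This is the standard Calder\'on--Zygmund type argument for dyadic convolution kernels.

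First I would record the basic decay estimate: since $\Phi\in\cS$, for every integer $M>0$ there exists a constant $C_M$ with
\begin{equation*}
|\Phi(z)|\leq C_{M}(1+|z|)^{-M},\qquad |\nabla\Phi(z)|\leq C_{M}(1+|z|)^{-M}.
\end{equation*}
Consequently, for $\Phi_{j}(z)=2^{jd}\Phi(2^{j}z)$,
\begin{equation*}
|\Phi_{j}(z)|\leq C_{M}\,\frac{2^{jd}}{(1+2^{j}|z|)^{M}},\qquad |\nabla\Phi_{j}(z)|\leq C_{M}\,\frac{2^{j(d+1)}}{(1+2^{j}|z|)^{M}}.
\end{equation*}
Fix $x\neq y$, set $r=|x-y|$, and let $j_{0}\in\bZ$ be chosen so that $2^{j_{0}}r\sim 1$. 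I would estimate the sum $\sum_{j}|K_{j}(x,y)|^{2}$ by splitting at $j_{0}$: for $j\leq j_{0}$ use the trivial bound $|\Phi_{j}(x-y)|\leq C\,2^{jd}$ so that $\sum_{j\leq j_{0}}2^{2jd}$ is a geometric series dominated by $2^{2j_{0}d}\sim r^{-2d}$; for $j> j_{0}$ use the decay with some fixed $M>d$ to get $|\Phi_{j}(x-y)|\leq C_{M}\,2^{-j(M-d)}r^{-M}$, so $\sum_{j>j_{0}}2^{-2j(M-d)}r^{-2M}\lesssim 2^{-2j_{0}(M-d)}r^{-2M}\sim r^{-2d}$. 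Taking the square root yields the first estimate.

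For the second inequality I would first use the hypothesis $|x-x'|\leq\tfrac{1}{2}\max\{|x-y|,|x'-y|\}$ to see that $|x-y|\sim|x'-y|\sim|x-y|+|x'-y|=:\rho$, and moreover for every point $z$ on the segment joining $x-y$ to $x'-y$ one has $|z|\geq c\rho$ for some absolute $c>0$. By the mean value theorem,
\begin{equation*}
|K_{j}(x,y)-K_{j}(x',y)|\leq |x-x'|\sup_{z}|\nabla\Phi_{j}(z)|\leq C_{M}|x-x'|\,\frac{2^{j(d+1)}}{(1+c\,2^{j}\rho)^{M}}.
\end{equation*}
Now I repeat the dyadic splitting with $\rho$ in place of $r$ and pick $j_{1}\in\bZ$ with $2^{j_{1}}\rho\sim 1$. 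For $j\leq j_{1}$, the bound $|\nabla\Phi_{j}|\leq C\,2^{j(d+1)}$ gives $\sum_{j\leq j_{1}}2^{2j(d+1)}\lesssim \rho^{-2(d+1)}$; for $j>j_{1}$, choosing $M>d+1$, the decay yields $\sum_{j>j_{1}}2^{-2j(M-d-1)}\rho^{-2M}\lesssim \rho^{-2(d+1)}$. Combining and taking square roots gives $|x-x'|\,\rho^{-(d+1)}$, which is exactly the second claim.

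The only delicate point is the mild geometric fact used in the second estimate: under the hypothesis on $|x-x'|$, the entire segment from $x-y$ to $x'-y$ stays away from the origin at distance $\gtrsim\rho$. This is the main place where the restriction is used, and it is a short triangle inequality argument; everything else is a routine dyadic bookkeeping of Schwartz tails. I would therefore expect the geometric reduction and the correct choice of $M$ (any $M$ strictly larger than $d+1$ suffices uniformly for both estimates) to be the only non-mechanical ingredients.
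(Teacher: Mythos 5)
Your proposal is correct and follows essentially the same argument as the paper: a dyadic splitting of the sum at the scale $2^{j}\sim(\text{distance})^{-1}$, Schwartz decay of $\Phi$ and $\nabla\Phi$ for the large-$j$ tail, and the mean value theorem combined with the observation that the segment joining $x-y$ and $x'-y$ stays at distance comparable to $|x-y|+|x'-y|$ from the origin (the paper's inequality \eqref{eqn 01.07.16:50}). No gaps; the choice of sufficiently large decay exponent and the geometric lemma you flag are exactly the points the paper also handles.
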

\begin{proof}
Take $x,y\in\R^{d}$. Let $j_{0}=j_{0}(x,y)$, the smallest integer so that $2^{j_{0}}|x-y|>1$. This implies that
\begin{equation}\label{eqn 03.20.19:45}
1<2^{j_{0}}|x-y|\leq2.
\end{equation}
Note that due to the choice of $j_{0}$, we have \eqref{eqn 03.20.19:45}. From this, we have
\begin{equation*}
\begin{aligned}
\left|\sum_{j\in\bZ}|K_{j}(x,y)|^{2}\right|^{1/2}&\leq \left|\sum_{j<j_{0}}|K_{j}(x,y)|^{2}\right|^{1/2}+\left|\sum_{j\geq j_{0}}|K_{j}(x,y)|^{2}\right|^{1/2}
\\
&\leq N(d,\Phi) \left|\sum_{j<j_{0}}2^{2jd}\right|^{1/2}+N(d,\Phi)  \left|\sum_{j\geq j_{0}}\left|2^{jd}\frac{1}{|2^{j}(x-y)|^{a}}\right|^{2}\right|^{1/2}
\\
&\leq N(d,\Phi)\frac{1}{|x-y|^{d}}+N(d,\Phi)2^{j_{0}(d-a)}\frac{1}{|x-y|^{a}}
\\
&\leq N(d,\Phi)\frac{1}{|x-y|^{d}},
\end{aligned}
\end{equation*}
where $a>d$, and the last inequality holds due to \eqref{eqn 03.20.19:45}. Note that since $\Phi\in\cS$, one can choose such $a$. 
\\
Now take $x,x',y\in\bR^{d}$ satisfying $|x-x'|\leq \frac{1}{2}\max{\{|x-y|,|x'-y|\}}$ and let $j_{1}$ be the smallest integer so that $2^{j_{1}}(|x-y|+|x'-y|)>1$. Like \eqref{eqn 03.20.19:45}, we get
\begin{equation}\label{eqn 03.30.20:03}
1<2^{j_{1}}(|x-y|+|x'-y|)\leq 2.
\end{equation}
Observe that by the mean value theorem,
\begin{equation*}
\begin{aligned}
&\left|\sum_{j\in\bZ}|K_{j}(x,y)-K_{j}(x',y)|^{2}\right|^{1/2} 
\\
&\leq \left|\sum_{j\in\bZ}\bigg||x-x'||\nabla\Phi_{j}(\theta(x,x',y))|\bigg|^{2}\right|^{1/2}
\\
&\leq  \left|\sum_{j<j_{1}}\bigg|2^{j(d+1)}|x-x'||\nabla\Phi(2^{j}\theta(x,x',y))|\bigg|^{2}\right|^{1/2}
\\
&\quad\quad+\left|\sum_{j\geq j_{1}}\bigg|2^{j(d+1)}|x-x'||\nabla\Phi(2^{j}\theta(x,x',y))|\bigg|^{2}\right|^{1/2}
\\
&\leq N(d,\Phi)|x-x'|2^{(d+1)j_{1}}
+ N(d,\Phi) \left|\sum_{j\geq j_{1}}\bigg|\frac{2^{j(d+1)}|x-x'|}{|2^{j}\theta(x,x',y)|^{d+b}}\bigg|^{2}\right|^{1/2}
\\
&\leq N(d,\Phi)|x-x'|2^{(d+1)j_{1}}+N(d,\Phi)\frac{|x-x'|2^{j_{1}(1-b)}}{|\theta(x,x',y)|^{d+b}},
\end{aligned}
\end{equation*}
where $b>1$ (recall that since $\Phi\in \cS$ we can choose such $b$), and $\theta(x,x',y)$ is a point in the line segment connecting $x-y$, and $x'-y$ . Note that there exist $\lambda,\lambda'\in(0,1)$ so that
\begin{equation*}
\begin{gathered}
\theta(x,x',y)=(x-y)+\lambda(x-x'),
\\
\theta(x,x',y)=(x'-y)+\lambda'(x-x').
\end{gathered}
\end{equation*}
Since $|x-x'|\leq \frac{1}{2}\max{\{|x-y|,|x'-y|\}}$, one can check that
\begin{equation}\label{eqn 01.07.16:50}
\frac{1}{4}(|x-y|+|x'-y|)\leq |\theta(x,x',y)| \leq (|x-y|+|x'-y|).
\end{equation}
Therefore, by \eqref{eqn 03.30.20:03} we get
\begin{equation*}
\left|\sum_{j\in\bZ}|K_{j}(x,y)-K_{j}(x',y)|^{2}\right|^{1/2} \leq N(d,\Phi) \frac{|x-x'|}{(|x-y|+|x'-y|)^{d+1}}.
\end{equation*}
The lemma is proved.
\end{proof}
Note that Lemma \ref{lem 09.17.14:42} means  $\{\Psi_{j}:j\in\bZ\}$ is a $l_{2}$-valued standard kernel. Therefore, the following lemma holds.
\begin{lemma}\label{lem 09.17.16:14}
Let $1<p<\infty$, $w_{1}\in A_{p}(\bR^{d})$, and let $f_{k}$ be a sequence of functions satisfying
\begin{equation*}
\left\|\big(\sum_{k\in\bZ}|f_{k}|^{2}\big)^{1/2}\right\|_{L_{p}(w_{1})}<\infty.
\end{equation*}
Take $K_{j}$ from Lemma \ref{lem 09.17.14:42}. Then it holds that
\begin{equation}\label{eqn 09.17.21:31}
\left\|\big(\sum_{j\in\bZ}\sum_{k\in\bZ}|K_{j}\ast f_{k}|^{2}\big)^{1/2}\right\|_{L_{p}(w_{1})}\leq N \left\|\big(\sum_{k\in\bZ}|f_{k}|^{2}\big)^{1/2}\right\|_{L_{p}(w_{1})},
\end{equation}
where the constant $N$ depends only on $d,p,\Phi$, and $w_{1}$. Moreover, for $\Psi_{j}$ it holds that
\begin{equation}\label{eqn 09.27.15:11}
\left\|\big(\sum_{k\in\bZ}|f_{k}|^{2}\big)^{1/2}\right\|_{L_{p}(w_{1})}\leq N \left\|\big(\sum_{j\in\bZ}\sum_{k\in\bZ}|\Psi_{j}\ast f_{k}|^{2}\big)^{1/2}\right\|_{L_{p}(w_{1})},
\end{equation}
\end{lemma}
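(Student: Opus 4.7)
The plan is to view the map $T\colon(f_{k})_{k\in\bZ}\mapsto(K_{j}\ast f_{k})_{j,k\in\bZ}$ as a convolution operator with an $\ell^{2}(\bZ^{2})$-valued kernel and invoke the weighted vector-valued Calder\'on--Zygmund theorem, using the kernel estimates already furnished by Lemma \ref{lem 09.17.14:42}. The reverse inequality \eqref{eqn 09.27.15:11} will then follow by duality combined with a reproducing-formula argument.

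To set up the Calder\'on--Zygmund argument I first need the unweighted $L_{2}$ bound for the scalar operator $f\mapsto(K_{j}\ast f)_{j\in\bZ}$. By Plancherel,
\begin{equation*}
\int_{\bR^{d}}\sum_{j\in\bZ}|K_{j}\ast f|^{2}\,dx=\int_{\bR^{d}}|\cF f(\xi)|^{2}\sum_{j\in\bZ}|\cF\Phi(2^{-j}\xi)|^{2}\,d\xi\leq C\|f\|_{L_{2}}^{2},
\end{equation*}
using that in the intended application $\Phi=\Psi$ has Fourier transform supported in an annulus, so $\sum_{j}|\cF\Psi(2^{-j}\xi)|^{2}\leq C$ uniformly in $\xi\neq 0$. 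Summing componentwise in $k$ promotes this to the $\ell^{2}$-valued inequality for $(f_{k})\in L_{2}(\bR^{d};\ell^{2})$. I then invoke the weighted vector-valued Calder\'on--Zygmund theorem (see, e.g., \cite{grafakos2009modern}): the kernel $\mathbf{K}(x,y):=(K_{j}(x,y))_{j\in\bZ}$ is $\ell^{2}$-valued, and Lemma \ref{lem 09.17.14:42} supplies exactly the standard size bound $\|\mathbf{K}(x,y)\|_{\ell^{2}}\lesssim|x-y|^{-d}$ and the H\"ormander-type smoothness estimate. Together with the $L_{2}$ bound above and $w_{1}\in A_{p}(\bR^{d})$ this yields boundedness $L_{p}(w_{1})\to L_{p}(w_{1};\ell^{2})$; the identical kernel bounds, read entrywise, apply to the tensor extension $T\otimes I_{\ell^{2}}$ acting on $\ell^{2}$-valued inputs, giving \eqref{eqn 09.17.21:31}.

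For \eqref{eqn 09.27.15:11} I argue by duality. Writing $\tilde w_{1}:=w_{1}^{-1/(p-1)}\in A_{p'}(\bR^{d})$, we have
\begin{equation*}
\left\|\Big(\sum_{k}|f_{k}|^{2}\Big)^{1/2}\right\|_{L_{p}(w_{1})}=\sup_{(g_{k})}\sum_{k}\int_{\bR^{d}}f_{k}g_{k}\,dx,
\end{equation*}
the supremum being over $(g_{k})$ with $L_{p'}(\tilde w_{1};\ell^{2})$-norm at most one. The identity $\sum_{j}\cF\Psi_{j}\equiv 1$ on $\{\xi\neq 0\}$ gives $f_{k}=\sum_{j}\Psi_{j}\ast f_{k}$; choosing $\tilde\Psi\in\cS$ with $\cF\tilde\Psi\equiv 1$ on $\mathrm{supp}\,\cF\Psi$ permits $\Psi_{j}\ast f_{k}=\Psi_{j}\ast\tilde\Psi_{j}\ast f_{k}$, and we transfer one copy of $\tilde\Psi_{j}$ onto $g_{k}$. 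Cauchy--Schwarz in $(j,k)$ followed by H\"older in $x$ produces
\begin{equation*}
\sum_{k}\int f_{k}g_{k}\,dx\leq\left\|\Big(\sum_{j,k}|\Psi_{j}\ast f_{k}|^{2}\Big)^{1/2}\right\|_{L_{p}(w_{1})}\left\|\Big(\sum_{j,k}|\tilde\Psi_{j}\ast g_{k}|^{2}\Big)^{1/2}\right\|_{L_{p'}(\tilde w_{1})},
\end{equation*}
and an application of \eqref{eqn 09.17.21:31} to the second factor (with $\Phi=\tilde\Psi$ and weight $\tilde w_{1}$) closes the estimate.

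The step I expect to be the main obstacle is the unweighted $L_{2}$-boundedness: Lemma \ref{lem 09.17.14:42} is stated for arbitrary $\Phi\in\cS$, whereas $L_{2}$-boundedness of $f\mapsto(\Phi_{j}\ast f)_{j}$ genuinely requires spectral localization or cancellation of $\cF\Phi$ near the origin, which in the present applications is supplied only because $\cF\Psi$ is annulus-supported. Once this point is fixed, the remainder reduces to a routine invocation of weighted vector-valued Calder\'on--Zygmund theory together with a standard duality maneuver built on the reproducing formula.
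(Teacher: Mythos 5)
Your proposal is correct and, for the forward inequality \eqref{eqn 09.17.21:31}, follows essentially the same route as the paper: the paper observes that the $\ell^2$-valued kernel $\{K_j\}_{j\in\bZ}$ satisfies the $(D_\infty)$ (hence every $(D_r)$) condition of Rubio de Francia--Ruiz--Torrea and invokes their Theorem~1.6, while you call the same machinery the weighted vector-valued Calder\'on--Zygmund theorem in \cite{grafakos2009modern}; Lemma~\ref{lem 09.17.14:42} supplies the identical standard size and H\"ormander estimates in either formulation. For the reverse inequality \eqref{eqn 09.27.15:11} you genuinely diverge: the paper simply cites \cite[Theorem~3.1]{de1986calderon}, whereas you give a self-contained duality argument built on the reproducing identity $f_k=\sum_j\Psi_j\ast f_k$, an auxiliary annulus-localized $\tilde\Psi$ with $\hat{\tilde\Psi}\equiv1$ on $\operatorname{supp}\hat\Psi$, Cauchy--Schwarz in $(j,k)$, H\"older against $\tilde w_1=w_1^{-1/(p-1)}\in A_{p'}$, and then \eqref{eqn 09.17.21:31} applied to $\tilde\Psi$ in $L_{p'}(\tilde w_1)$. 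That argument is correct and has the virtue of making transparent exactly which structural properties of $\Psi$ get used, at the cost of being somewhat longer than a direct citation.

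Your closing remark about the unweighted $L_2$ bound pinpoints a real looseness in the lemma as stated. Lemma~\ref{lem 09.17.14:42} and Lemma~\ref{lem 09.17.16:14} place no cancellation hypothesis on $\Phi\in\cS$, yet \eqref{eqn 09.17.21:31} fails for a generic Schwartz $\Phi$ with $\hat\Phi(0)\neq0$: taking $f_k=f\,1_{k=0}$ with $\hat f$ concentrated near the origin, one has $\Phi_j\ast f\to\hat\Phi(0)\,f$ as $j\to-\infty$, so $\sum_j|\Phi_j\ast f|^2=\infty$ wherever $f\neq0$. The paper's route via \cite[Theorem~1.6]{de1986calderon} presupposes an initial $L_{r_0}$ bound for the $\ell^2$-valued operator, which is equally unavailable without such cancellation. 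Every use of the lemma in the paper involves multipliers vanishing near the origin (the $\Psi_{\gamma,j}$'s, the $\Theta_j$'s, the blocks $\Psi_{j-1}+\Psi_j+\Psi_{j+1}$), so the downstream arguments are unaffected; but the hypothesis on $\Phi$ should be sharpened, e.g.\ to $\hat\Phi(0)=0$ together with $\sup_{\xi\neq0}\sum_j|\hat\Phi(2^{-j}\xi)|^2<\infty$, or simply to $\hat\Phi$ supported in an annulus, exactly as you observe.
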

\begin{proof}
From Lemma \ref{lem 09.17.14:42} for each fixed $x\in\bR^{d}$, and $a\in \bR$, the map 
$$
a\to  \{aK_{j}(x):j\in\bZ\}
$$ 
is a bounded linear from $\bR$ to $l_{2}$ satisfying $(D_{\infty})$ condition in \cite{de1986calderon}, and thus it satisfies $(D_{r})$ condition therein for any $1<r<\infty$. Also note that by Remark \ref{rmk 09.27.21:52} (ii) we can find $1<q<p$ such that $w\in A_{q}$. Therefore, applying \cite[Theorem 1.6]{de1986calderon} with $1<r<\infty$ such that $q<p/r'<p$, where $r'=r/r-1$, we have \eqref{eqn 09.17.21:31}. For \eqref{eqn 09.27.15:11}, see \cite[Theorem 3.1]{de1986calderon}.
\end{proof}

\begin{remark}\label{rmk 09.18.19:25}
(i) By taking $f_{k}=f1_{k=0}$ in Lemma \ref{lem 09.17.16:14} we can observe that the operator $T(f)=\{K_{j}\ast f\}_{j\in\bZ}$ maps $L_{p}(w_{1})$ to $L_{p}(l_{2},w_{1})$. 

(ii) Also observe that if $K$ is radial and has real-valued Fourier transform then we have
\begin{equation*}
\begin{gathered}
\int_{\R^{d}}(K_{j}\ast f)\bar{g}dx=\int_{\R^{d}}\hat{K}_{j}\hat{f}\bar{\hat{g}}d\xi=\int_{\R^{d}}\hat{f}\overline{\hat{K}_{j}\hat{g}}d\xi=\int_{\R^{d}}f\,\overline{(K_{j}\ast g)}dx,
\\
\int_{\R^{d}}(K_{j}\ast f)gdx=\int_{\R^{d}}f (K_{j}\ast g) dx.
\end{gathered}
\end{equation*}
By using these we can check that the adjoint operator $T^{\ast}$ of $T$ satisfies $T^{\ast}(g)=\sum_{j}K_{j}\ast g_{j}$ and it maps $L_{p'}(l_{2},\tilde{w}_{1})$ to $L_{p'}(\tilde{w}_{1})$, where $\tilde{w}_{1}=w_{1}^{-1/(p-1)}$. Since $1<p<\infty$ is arbitrary, for a sequence of functions $\{f_{j}:j\in\bZ\}\in L_{p}(l_{2},w_{1})$ we have
\begin{equation}\label{eqn 09.18.20:05}
\|\sum_{j\in\bZ}K_{j}\ast f_{j}\|_{L_{p}(w_{1})}\leq N \|\big(\sum_{j\in\bZ}|f_{j}|^{2}\big)^{1/2}\|_{L_{p}(w_{1})}.
\end{equation}
\end{remark}

The following theorem is a Fourier multiplier theorem for $L_{p}(w_{1})$, whose proof is containted in \cite{kurtz1980littlewood} (for more general version of theorem, see e.g. \cite{fackler2020}).
\begin{theorem}\label{thm 01.05.14:43}
Let $k>d$, $1<p<\infty$ and $w_{1}\in A_{p}(\bR^{d})$. Suppose that $T$ is a linear map on $\mathcal{S}(\bR^{d})$ defined as
$$
Tf(x)=\cF^{-1}(m \cF f)(x),
$$
where $m\in C^{k}(\bR^{d}\setminus \{0\})$ satisfies
$$
\sup_{r>0} r^{2|\beta|-d} \int_{r\leq |\xi|\leq 2r}\left| D^{\beta}m(\xi)\right|^{2} d\xi \leq B
$$
for all multi-index $\beta$ such that $|\beta|\leq k$, and for a positive constant $B$. Then we can extend $T$ to $L_{p}(w_{1})$, and  for any $f\in L_{p}(w_{1})$ it holds that
$$
\|Tf\|_{L_{p}(w_{1})} \leq N \|f\|_{L_{p}(w_{1})},
$$
where the constant $N$ depends only on $d,p,w_{1}$, and $B$.
\end{theorem}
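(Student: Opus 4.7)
The plan is to run the vector-valued Calder\'on--Zygmund route for which Lemmas \ref{lem 09.17.14:13}--\ref{lem 09.17.16:14} and Remark \ref{rmk 09.18.19:25} were assembled, rather than to invoke H\"ormander--Mikhlin as a black box. Specifically, I would dyadically decompose the symbol $m$ in frequency, verify that the resulting convolution kernels satisfy the $l^{2}$-valued standard-kernel conditions of Lemma \ref{lem 09.17.14:42}, apply the weighted $l^{2}$-valued Calder\'on--Zygmund bound in Remark \ref{rmk 09.18.19:25}(ii), and finally reassemble via the weighted Littlewood--Paley inequality (Lemma \ref{lem 09.17.14:13}).

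For the decomposition, using the partition in \eqref{eqn 09.17.14:14}, I set $m_{j}(\xi)=m(\xi)\hat\Psi(2^{-j}\xi)$ and $K_{j}=\cF^{-1}m_{j}$ for $j\in\bZ$; each $m_{j}$ is supported in the annulus $\{|\xi|\sim 2^{j}\}$. The linear change of variable $\xi=2^{j}\eta$ gives the self-similar scaling $K_{j}(x)=2^{jd}\tilde m_{j}(2^{j}x)$ with $\tilde m_{j}=\cF^{-1}(m(2^{j}\cdot)\hat\Psi)$. The same substitution in the hypothesis yields uniform $L^{2}$-control of $D^{\alpha}[m(2^{j}\cdot)\hat\Psi]$ by $NB^{1/2}$ on the compactly supported set $\{|\eta|\sim 1\}$ for all $|\alpha|\le k$; Cauchy--Schwarz upgrades this to $L^{1}$-control, and Fourier inversion applied to $y^{\alpha}\tilde m_{j}(y)=i^{|\alpha|}\cF^{-1}(D^{\alpha}(m(2^{j}\cdot)\hat\Psi))(y)$ then delivers
$$|\tilde m_{j}(y)|\le NB^{1/2}(1+|y|)^{-k},\qquad |\nabla\tilde m_{j}(y)|\le NB^{1/2}(1+|y|)^{-k},$$
with constants depending only on $d,k,\hat\Psi$. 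Rescaling produces $|K_{j}(x)|\le NB^{1/2}2^{jd}(1+2^{j}|x|)^{-k}$ and $|\nabla K_{j}(x)|\le NB^{1/2}2^{j(d+1)}(1+2^{j}|x|)^{-k}$.

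Summing the $l^{2}$ squares of these pointwise bounds, split at $j_{0}\sim -\log_{2}|x|$, and using $k>d$ to force convergence of the geometric tail, yields the $l^{2}$-valued standard-kernel conditions
$$\Big(\sum_{j\in\bZ}|K_{j}(x)|^{2}\Big)^{1/2}\le \frac{N}{|x|^{d}},\qquad \Big(\sum_{j\in\bZ}|K_{j}(x)-K_{j}(x')|^{2}\Big)^{1/2}\le \frac{N|x-x'|}{(|x|+|x'|)^{d+1}}$$
for $|x-x'|\le \tfrac{1}{2}\max\{|x|,|x'|\}$, precisely matching Lemma \ref{lem 09.17.14:42}. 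Remark \ref{rmk 09.18.19:25}(ii) (equation \eqref{eqn 09.18.20:05}) then gives
$$\Big\|\sum_{j\in\bZ}K_{j}\ast f_{j}\Big\|_{L_{p}(w_{1})}\le N\Big\|\Big(\sum_{j\in\bZ}|f_{j}|^{2}\Big)^{1/2}\Big\|_{L_{p}(w_{1})}$$
for arbitrary sequences $\{f_{j}\}\in L_{p}(l_{2},w_{1})$. Taking $f_{j}=\tilde\Psi_{j}\ast f$ with $\tilde\Psi_{j}$ a fattened Littlewood--Paley bump satisfying $\hat{\tilde\Psi}_{j}\equiv 1$ on $\mathrm{supp}\,\hat\Psi_{j}$, one has $K_{j}\ast f=K_{j}\ast(\tilde\Psi_{j}\ast f)=K_{j}\ast f_{j}$, so that $Tf=\sum_{j}K_{j}\ast f_{j}$ (the identity $m=\sum_{j}m\hat\Psi_{j}$ holds pointwise for $\xi\ne 0$); the right-hand side is then controlled by $\|f\|_{L_{p}(w_{1})}$ via the weighted Littlewood--Paley inequality \eqref{eqn 09.17.14:16} applied to $\{\tilde\Psi_{j}\ast f\}$.

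The main technical hurdle is the kernel-estimate step: one must budget both the Cauchy--Schwarz loss of $d/2$ in passing from $L^{2}$-Hypotheses on $D^{\beta}m$ to pointwise bounds on $\tilde m_{j}$, and the cost of summing in $l^{2}$ over dyadic shells, which is why the statement asks for $k>d$ instead of the sharp Mikhlin count $k>d/2$. Every constant must be tracked to depend only on $d,k,p,[w_{1}]_{p},B$, so that the self-similar scaling $K_{j}(x)=2^{jd}\tilde m_{j}(2^{j}x)$ is exploited uniformly in $j$; without that uniformity, Lemma \ref{lem 09.17.16:14} would produce a $j$-dependent constant that spoils the final Littlewood--Paley reassembly.
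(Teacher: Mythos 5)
The paper itself offers no proof here: it cites Kurtz \cite{kurtz1980littlewood} (and \cite{fackler2020} for more general statements) and moves on. Your proposal is therefore a genuinely different route from the paper's one-line citation, but it is in fact the natural self-contained argument using the vector-valued Calder\'on--Zygmund tool kit already assembled in Lemmas \ref{lem 09.17.14:42}--\ref{lem 09.17.16:14} and Remark \ref{rmk 09.18.19:25}; it is also essentially the argument in Kurtz's original paper. The frequency decomposition $m_j=m\hat\Psi_j$, the self-similar rescaling $K_j(x)=2^{jd}\tilde m_j(2^j x)$, the Cauchy--Schwarz upgrade of the annular $L^2$-hypothesis to pointwise decay $|\tilde m_j(y)|\lesssim B^{1/2}(1+|y|)^{-k}$, and the dyadic split at $j_0\sim\log_2(1/|x|)$ (where $k>d$ enters to control the geometric tail) are all correct and match the paper's $l^2$ standard-kernel conditions. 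The reassembly via a fattened Littlewood--Paley family $\tilde\Psi_j$ and Lemma \ref{lem 09.17.14:13} is sound.

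Two points deserve tightening. First, you invoke \eqref{eqn 09.18.20:05} directly for your $K_j=\cF^{-1}(m\hat\Psi_j)$; but in the paper, Remark \ref{rmk 09.18.19:25}(ii) derives that inequality only for kernels of the self-similar, radial, real-Fourier-transform form $K_j=\Phi_j=2^{jd}\Phi(2^j\cdot)$ (radiality is what is used to identify the adjoint). Your kernels are neither radial nor self-similar with a fixed profile, so \eqref{eqn 09.18.20:05} does not literally apply; you should either redo the duality step with $K_j^*(x)=\overline{K_j(-x)}$ in place of $K_j$, or skip Remark \ref{rmk 09.18.19:25} entirely and appeal to \cite[Theorem 1.6]{de1986calderon} as in the proof of Lemma \ref{lem 09.17.16:14}. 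Second, the weighted vector-valued Calder\'on--Zygmund theorem takes as input an unweighted $L^{r_0}$-bound, and you never flag where $T$ (or $f\mapsto\{K_j\ast f\}$) is bounded on $L^2$; this follows from $\|m\|_{L^\infty}\le NB^{1/2}$, which needs a Sobolev-embedding argument on each dyadic annulus using $k>d$ and the full range $|\beta|\le k$ of the hypothesis, and should be stated. Finally, the attribution of the threshold $k>d$ to the Cauchy--Schwarz loss is slightly off: the Cauchy--Schwarz step costs only a bounded constant (the annulus has unit measure after rescaling); $k>d$ is really imposed by the pointwise summability $\sum_{j\ge j_0}2^{2j(d-k)}<\infty$ in the $l^2$ standard-kernel estimate.
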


By applying the above lemmas, we get the following Littlewood-Paley characterization for $H^{\gamma}_{p}(w_{1})$ whose proof is similar to that of \cite[Theorem 6.2.6]{grafakos2009modern}. The only difference is that we use the weighted inequalities. 
\begin{theorem}\label{thm 09.17.21:54}
Let $1<p<\infty$, $w_{1}\in A_{p}(\bR^{d})$. Then the following hold;

(i) If $u\in H^{\gamma}_{p}(w_{1})$, then there is a constant $N=N(d,p,\gamma,w_{1},\Psi)$ such that
\begin{equation}\label{eqn 09.17.21:55}
\begin{gathered}
\|\Psi_{0}\ast u\|_{L_{p}(w_{1})}+\|\big(\sum_{j=1}^{\infty}|2^{\gamma j}\Psi_{j}\ast u|^{2}\big)^{1/2}\|_{L_{p}(w_{1})}\leq N \|u\|_{H^{\gamma}_{p}(w_{1})}.
\end{gathered}
\end{equation}

(ii) Also if $u$ is a tempered distribution so that the left hand side of \eqref{eqn 09.17.21:55} is finite, then $u\in H^{\gamma}_{p}(w_{1})$, and there is a constant $N=N(d,p,\gamma,w_{1},\Psi)$ so that
\begin{equation}\label{eqn 09.18.10:40}
\begin{gathered}
\|u\|_{H^{\gamma}_{p}(w_{1})} \leq N\left(\|\Psi_{0}\ast u\|_{L_{p}(w_{1})}+\|\big(\sum_{j=1}^{\infty}|2^{\gamma j}\Psi_{j}\ast u|^{2}\big)^{1/2}\|_{L_{p}(w_{1})}\right)
\end{gathered}
\end{equation}
\end{theorem}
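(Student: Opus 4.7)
The plan is to adapt the classical Littlewood--Paley characterization of Bessel potential spaces (cf.\ \cite[Theorem~6.2.6]{grafakos2009modern}) to the weighted setting, running the usual dyadic decomposition argument while replacing each unweighted step by its weighted analogue: the weighted Littlewood--Paley inequality (Lemma~\ref{lem 09.17.14:13}), Kurtz's weighted multiplier theorem (Theorem~\ref{thm 01.05.14:43}), and the weighted vector-valued convolution estimates (Lemma~\ref{lem 09.17.16:14} together with the dual inequality \eqref{eqn 09.18.20:05} of Remark~\ref{rmk 09.18.19:25}).

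For part~(i), set $g:=(1-\Delta)^{\gamma/2}u\in L_p(w_1)$ and split the left-hand side of \eqref{eqn 09.17.21:55} into low- and high-frequency contributions. The low-frequency piece $\Psi_0\ast u$ equals $K_0\ast g$ with $\hat K_0(\xi)=\hat\Psi_0(\xi)(1+|\xi|^2)^{-\gamma/2}$; since $\hat\Psi_0$ is smooth and vanishes for $|\xi|\ge 2$ by \eqref{eqn 09.17.14:14}, $K_0\in\mathcal{S}$ and Remark~\ref{rmk 05.08.1}(ii) yields $\|\Psi_0\ast u\|_{L_p(w_1)}\le N\|g\|_{L_p(w_1)}$. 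For the dyadic tail, choose a Schwartz function $\tilde\Psi$ with $\hat{\tilde\Psi}$ supported in an annulus and identically $1$ on $\mathrm{supp}\,\hat\Psi$; then $\hat\Psi\,\hat{\tilde\Psi}=\hat\Psi$ rewrites
\begin{equation*}
2^{\gamma j}\Psi_j\ast u \;=\; K^j\ast(\Psi_j\ast g), \qquad \hat K^j(\xi)=\frac{2^{\gamma j}}{(1+|\xi|^2)^{\gamma/2}}\,\hat{\tilde\Psi}(2^{-j}\xi).
\end{equation*}
Writing $K^j(x)=2^{jd}K^{(j)}(2^j x)$ and changing variables yields $\hat K^{(j)}(\eta)=(2^{-2j}+|\eta|^2)^{-\gamma/2}\hat{\tilde\Psi}(\eta)$; since $|\eta|$ stays bounded away from $0$ on $\mathrm{supp}\,\hat{\tilde\Psi}$, the family $\{K^{(j)}\}_{j\ge 1}$ is uniformly bounded in every Schwartz seminorm. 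Consequently the kernel bounds of Lemma~\ref{lem 09.17.14:42} and the vector-valued inequality \eqref{eqn 09.17.21:31} apply with a $j$-independent constant, giving
\begin{equation*}
\Big\|\Big(\sum_{j\ge 1}|2^{\gamma j}\Psi_j\ast u|^2\Big)^{1/2}\Big\|_{L_p(w_1)} \le N\Big\|\Big(\sum_{j\ge 1}|\Psi_j\ast g|^2\Big)^{1/2}\Big\|_{L_p(w_1)},
\end{equation*}
which is in turn dominated by $N\|g\|_{L_p(w_1)}=N\|u\|_{H^\gamma_p(w_1)}$ via Lemma~\ref{lem 09.17.14:13}.

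For part~(ii), we invert the decomposition. Using $\hat\Psi_0(\xi)+\sum_{j\ge 1}\hat\Psi(2^{-j}\xi)=1$ for $\xi\ne 0$, one writes $g:=M_0\ast(\Psi_0\ast u)+\sum_{j\ge 1}M^j\ast(2^{\gamma j}\Psi_j\ast u)$, where $\hat M_0$ is smooth, compactly supported, and agrees with $(1+|\xi|^2)^{\gamma/2}$ on $\mathrm{supp}\,\hat\Psi_0$, and $\hat M^j(\xi)=2^{-\gamma j}(1+|\xi|^2)^{\gamma/2}\hat{\tilde\Psi}(2^{-j}\xi)$. Remark~\ref{rmk 05.08.1}(ii) handles the first term, while the rescaling $M^j(x)=2^{jd}M^{(j)}(2^jx)$ produces another uniformly Schwartz family and the dual vector-valued estimate \eqref{eqn 09.18.20:05} bounds the series by $N\|(\sum_{j\ge 1}|2^{\gamma j}\Psi_j\ast u|^2)^{1/2}\|_{L_p(w_1)}$. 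A direct distributional check shows $g=(1-\Delta)^{\gamma/2}u$, whence $u\in H^\gamma_p(w_1)$ and \eqref{eqn 09.18.10:40} follows.

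The only technical obstacle is that $K^j$ and $M^j$ are not exact dyadic dilates of a single Schwartz function, so one must verify that the proofs of Lemma~\ref{lem 09.17.14:42}--Lemma~\ref{lem 09.17.16:14} and of \eqref{eqn 09.18.20:05} go through unchanged for a family of Schwartz functions with uniformly controlled seminorms; this is straightforward since the decay and smoothness of $\Phi$ enter those proofs only through such seminorms.
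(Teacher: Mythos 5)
Your argument is correct, and it follows the same skeleton as the paper's proof (low/high frequency split, weighted vector-valued convolution bounds for the square function, duality for the reconstruction in part (ii)), but it organizes the dyadic multipliers differently. The paper deliberately homogenizes the symbol: it introduces $\tilde{u}=\cF^{-1}\{|\cdot|^{\gamma}(1-\hat{\Psi}_{0})\hat{u}\}$ and the kernels $\Psi_{\gamma,j}$, $\Theta_{j}$, which are \emph{exact} dyadic dilates of a single Schwartz function, so that Lemma \ref{lem 09.17.16:14}, Remark \ref{rmk 09.18.19:25} and \eqref{eqn 09.18.20:05} apply verbatim; the price is several applications of the weighted multiplier theorem (Theorem \ref{thm 01.05.14:43}) to pass between $|\xi|^{\gamma}$ and $(1+|\xi|^{2})^{\gamma/2}$ and a separate treatment of the terms $j=0,1$. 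You instead keep the inhomogeneous symbol and pair each $2^{\gamma j}\Psi_{j}\ast u$ with $\Psi_{j}\ast g$ (resp.\ reconstruct $g$ from the blocks) through correction kernels $K^{j}$, $M^{j}$ that are only \emph{uniformly} Schwartz rather than dilates of one fixed function; this avoids the multiplier theorem for the high-frequency part and the $j=0,1$ case distinction, but it means \eqref{eqn 09.17.21:31} and \eqref{eqn 09.18.20:05} cannot be quoted as stated and must be re-proved for families $\{2^{jd}\Phi^{(j)}(2^{j}\cdot)\}$ with uniformly bounded seminorms. You correctly flag this, and the extension is indeed routine: the kernel estimates in Lemma \ref{lem 09.17.14:42} use only decay bounds that hold uniformly for such a family, the a priori (unweighted) $L_{2}(l_{2})$-boundedness of the diagonal operator $\{f_{j}\}\mapsto\{K^{j}\ast f_{j}\}$ follows from Plancherel since $\sup_{j}\|\hat{K}^{(j)}\|_{L_{\infty}}<\infty$, the diagonal square-function bound is dominated by the double-sum bound of the family version of \eqref{eqn 09.17.21:31}, and the dual estimate follows by the same duality as in Remark \ref{rmk 09.18.19:25} (choosing $\tilde{\Psi}$ radial with real Fourier transform, or working with reflected kernels). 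Also make sure to record, as the paper implicitly does, that the inhomogeneous decomposition converges in $\cS'$ so that your series for $g$ really equals $(1-\Delta)^{\gamma/2}u$ as a tempered distribution. In short: the paper's route uses the stated lemmas as black boxes plus Theorem \ref{thm 01.05.14:43}, while yours is slightly more self-contained in spirit but requires reopening the proofs of the vector-valued lemmas; both yield the constants with the stated dependence.
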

\begin{proof}
(i) Suppose that $u\in H^{\gamma}_{p}(w_{1})$. Note that for $j\geq 2$
\begin{equation*}
2^{\gamma j}\hat{\Psi}_{j}(\xi)\hat{u}(\xi)=|2^{-j}\xi|^{-\gamma}\hat{\Psi}_{j}(\xi)|\xi|^{\gamma}\hat{u}(\xi)=|2^{-j}\xi|^{-\gamma}\hat{\Psi}_{j}(\xi)|\xi|^{\gamma}(1-\hat{\Psi}_{0}(\xi))\hat{u}(\xi)
\end{equation*}
since the support of $\hat{\Psi}_{0}$ does not intersect the support of $\hat{\Psi}_{j}$. 
Therefore, by defining
\begin{equation*}
\tilde{u}=\mathcal{F}^{-1}\left\{|\cdot|^{\gamma}(1-\hat{\Psi}_{0})\hat{u}\right\},
\end{equation*}
we get
\begin{equation*}
\|\big(\sum_{j\geq2}|2^{\gamma j}\Psi_{j}\ast u|^{2}\big)^{1/2}\|_{L_{p}(w_{1})}=\|\big(\sum_{j\geq2}|\Psi_{\gamma,j}\ast \tilde{u}|^{2}\big)^{1/2}\|_{L_{p}(w_{1})},
\end{equation*}
where $\hat{\Psi}_{\gamma,j}(\xi)=|2^{-j}\xi|^{-\gamma}\hat{\Psi}_{j}(\xi)$. By Remark \ref{rmk 09.18.19:25} (i), we have
\begin{equation*}
\|\big(\sum_{j\geq2}|2^{\gamma j}\Psi_{j}\ast u|^{2}\big)^{1/2}\|_{L_{p}(w_{1})} \leq N \|\tilde{u}\|_{L_{p}(w_{1})}.
\end{equation*}
Note that
\begin{equation*}
\begin{aligned}
\tilde{u}&=\cF^{-1}(|\cdot|^{\gamma}(1-\hat{\Psi}_{0})\hat{u})
\\
&=\cF^{-1}\left(\frac{|\cdot|^{\gamma}
(1-\hat{\Psi}_{0})}{(1+|\cdot|^{2})^{\gamma/2}}(1+|\cdot|^{2})^{\gamma/2}\hat{u}\right).
\end{aligned}
\end{equation*}
Applying Theorem \ref{thm 01.05.14:43} to the multiplier
\begin{equation*}
m(\xi)=\frac{|\xi|^{\gamma}(1-\hat{\Psi}_{0})(\xi)}{(1+|\xi|^{2})^{\gamma/2}},
\end{equation*}
we have
\begin{equation*}
\|\big(\sum_{j\geq2}|2^{\gamma j}\Psi_{j}\ast u|^{2}\big)^{1/2}\|_{L_{p}(w_{1})}\leq N \|\tilde{u}\|_{L_{p}(w_{1})}\leq N \|u\|_{H^{\gamma}_{p}(w_{1})}.
\end{equation*}
Similarly for $j=0,1$,
\begin{equation*}
\begin{gathered}
2^{\gamma}\hat{\Psi}_{1}(\xi)\hat{u}(\xi)=2^{\gamma}\frac{\hat{\Psi}(\frac{1}{2}\xi)}{(1+|\xi|^{2})^{\gamma/2}}(1+|\xi|^{2})^{\gamma/2}\hat{u}(\xi),
\\
\hat{\Psi}_{0}(\xi)\hat{u}(\xi)=\frac{\hat{\Psi}_{0}(\xi)}{(1+|\xi|^{2})^{\gamma/2}}(1+|\xi|^{2})^{\gamma/2}\hat{u}(\xi),
\end{gathered}
\end{equation*}
and it holds that
\begin{equation*}
\begin{gathered}
\|2^{\gamma}\Psi_{1}\ast u\|_{L_{p}(w_{1})}\leq N \|u\|_{H^{\gamma}_{p}(w_{1})},
\\
\|\Psi_{0}\ast u\|_{L_{p}(w_{1})} \leq N \|u\|_{H^{\gamma}_{p}(w_{1})}
\end{gathered}
\end{equation*}
by Theorem \ref{thm 01.05.14:43}. Hence, we have \eqref{eqn 09.17.21:55}.

(ii) Now suppose that $u\in\cD$ satisfies
\begin{equation*}
\|\Psi_{0}\ast u\|_{L_{p}(w_{1})}+\|\big(\sum_{j=1}^{\infty}|2^{\gamma j}\Psi_{j}\ast u|^{2}\big)^{1/2}\|_{L_{p}(w_{1})}<\infty.
\end{equation*}
Note that
\begin{equation*}
(1+|\xi|^{2})^{\gamma/2}\hat{u}=\hat{\Psi}_{0}(\xi)(1+|\xi|^{2})^{\gamma/2}\hat{u}+(1-\hat{\Psi}_{0}(\xi))(1+|\xi|^{2})^{\gamma/2}\hat{u}=:\hat{u}_{0}+\hat{u}_{1}.
\end{equation*}
Take $\Pi_{0}\in\cS$, whose Fourier transform $\hat{\Pi}_{0}$ has compact support and equals 1 on the support of $\hat{\Psi}_{0}$. Then we have
\begin{equation*}
\hat{u}_{0}(\xi)=\hat{\Pi}_{0}(\xi)(1+|\xi|^{2})^{\gamma/2}  \hat{\Psi}_{0}(\xi)\hat{u}(\xi).
\end{equation*}
Since $\hat{\Pi}_{0}(1+|\xi|^{2})^{\gamma/2}$ satisfies the condition in Theorem \ref{thm 01.05.14:43}, we have
\begin{equation*}
\|u_{0}\|_{L_{p}(w_{1})}\leq N \|\Psi_{0}\ast u\|_{L_{p}(w_{1})}.
\end{equation*}
Let $\Pi_{1}$ be a smooth function so that $\hat{\Pi}_{1}=0$ in the neighborhood of the origin and equals 1 on the support of $(1-\hat{\Psi}_{0})$. Observe that
\begin{equation*}
\hat{u}_{1}(\xi)=|\xi|^{-\gamma}(1+|\xi|^{2})^{\gamma/2}\hat{\Pi}_{1}(\xi)(1-\hat{\Psi}_{0}(\xi))|\xi|^{\gamma}\hat{u}(\xi).
\end{equation*}
Since $|\xi|^{-\gamma}(1+|\xi|^{2})^{\gamma/2}\hat{\Pi}_{1}(\xi)$ satisfies the condition in Theorem \ref{thm 01.05.14:43}, we have
\begin{equation*}
\|u_{1}\|_{L_{p}(w_{1})}\leq N \|\tilde{u}_{1}\|_{L_{p}(w_{1})},
\end{equation*}
where
\begin{equation*}
\begin{aligned}
\tilde{u}_{1}&=\cF^{-1}(|\cdot|^{\gamma}(1-\hat{\Psi}_{0})\hat{u}).
\end{aligned}
\end{equation*}
Let  $\Pi_{2}\in\cS$ such that $\hat{\Pi}_{2}=1$ on the support of $\hat{\Psi}$ and $\hat{\Pi}_{2}=0$ near the origin. By using the property of $\hat{\Psi}_{0}$,
\begin{equation*}
\mathcal{F}\tilde{u}_{1}=\sum_{j=1}^{\infty}|\xi|^{\gamma}\hat{\Psi}_{j}(\xi)\hat{\Pi}_{2}(2^{-j}\xi)\hat{u}(\xi)=\sum_{j=1}^{\infty}2^{\gamma j}\hat{\Psi}_{j}(\xi)\hat{\Theta}_{j}(\xi)\hat{u}(\xi),
\end{equation*}
where $\hat{\Theta}(\xi)=|\xi|^{\gamma}\hat{\Pi}_{2}(\xi)$, and $\hat{\Theta}_{j}(\xi)=\hat{\Theta}(2^{-j}\xi)$. Thus it follows that
\begin{equation*}
\tilde{u}_{1}=\sum_{j=1}^{\infty}\Theta_{j}\ast ( 2^{\gamma j}\Psi_{j}\ast u)
\end{equation*}
By \eqref{eqn 09.18.20:05}  we have
\begin{equation*}
\|\tilde{u}_{1}\|_{L_{p}(w_{1})}\leq N \|\big(\sum_{j=1}^{\infty}|2^{\gamma j}\Psi_{j}\ast u|^{2}\big)^{1/2}\|_{L_{p}(w_{1})}.
\end{equation*}
So we obtain \eqref{eqn 09.18.10:40}, and the theorem is proved.
\end{proof}
Now we give an interpolation theorem of weighted Sobolev spaces.
\begin{theorem}\label{thm 09.24.13:17}
Let $1<p_{0},p_{1}<\infty$, $w^{0}_{1}\in A_{p_{0}}(\bR^{d})$,$w^{1}_{1}\in A_{p_{1}}(\bR^{d})$, $\gamma_{0},\gamma_{1}\in \bR$ and let $\theta\in(0,1)$. If we define 
\begin{gather*}
\gamma=(1-\theta)\gamma_{0}+\theta\gamma_{1}, \quad w_{1}=(w^{0}_{1})^{p(1-\theta)/p_{0}}(w^{1}_{1})^{p\theta/p_{1}},
\\
\frac{1}{p}=\frac{1-\theta}{p_{0}}+\frac{\theta}{p_{1}},
\end{gather*}
then we have
\begin{equation}\label{eqn 01.06.14:29}
[H^{\gamma_{0}}_{p_{0}}(w^{0}_{1}),H^{\gamma_{1}}_{p_{1}}(w^{1}_{1})]_{[\theta]}=H^{\gamma}_{p}(w_{1}),
\end{equation}
where the space $[H^{\gamma_{0}}_{p}(w^{0}_{1}),H^{\gamma_{1}}_{p}(w^{1}_{1})]_{[\theta]}$ is the complex interpolation space.
\end{theorem}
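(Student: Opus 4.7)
My plan is to realize each $H^{\gamma}_{p}(w_{1})$ as a retract of a vector-valued weighted Lebesgue space via the Littlewood--Paley characterization (Theorem \ref{thm 09.17.21:54}) and then apply the standard retraction--coretraction lemma for complex interpolation (Bergh-L\"ofstr\"om, Theorem 6.4.2).

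Introduce the weighted Hilbert sequence space
\[
\ell^{\gamma}_{2} := \Big\{ (a_{j})_{j\geq 0} : \|(a_{j})\|_{\ell^{\gamma}_{2}} = \Big(|a_{0}|^{2} + \sum_{j\geq 1} |2^{\gamma j}a_{j}|^{2}\Big)^{1/2}<\infty \Big\}.
\]
Define a coretraction $S:H^{\gamma}_{p}(w_{1})\to L_{p}(w_{1};\ell^{\gamma}_{2})$ by $Su=\{\Psi_{j}\ast u\}_{j\geq 0}$, whose boundedness is exactly Theorem \ref{thm 09.17.21:54}(i). For the retraction, pick $\tilde\Psi,\tilde\Psi_{0}\in\cS$ whose Fourier transforms equal $1$ on $\operatorname{supp}\hat\Psi,\operatorname{supp}\hat\Psi_{0}$ respectively and are supported in slightly larger sets, and set $\tilde\Psi_{j}(x)=2^{jd}\tilde\Psi(2^{j}x)$ for $j\geq 1$; then define
\[
R(\{f_{j}\}) := \sum_{j\geq 0}\tilde\Psi_{j}\ast f_{j}.
\]
Because $\hat{\tilde\Psi}_{j}\hat{\Psi}_{j}=\hat{\Psi}_{j}$ and $\sum_{j}\hat\Psi_{j}\equiv 1$, we have $R\circ S=\operatorname{id}$. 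Boundedness of $R$ from $L_{p}(w_{1};\ell^{\gamma}_{2})$ into $H^{\gamma}_{p}(w_{1})$ follows via Theorem \ref{thm 09.17.21:54}(ii) after the Fourier-support remark $\Psi_{k}\ast\tilde\Psi_{j}=0$ for $|k-j|$ large, which reduces $\Psi_{k}\ast R(\{f_{j}\})$ to a bounded-range sum controllable by \eqref{eqn 09.18.20:05} and Lemma \ref{lem 09.17.14:42}. Crucially, the formulas for $S$ and $R$ are the same regardless of $(\gamma_{i},p_{i},w_{1}^{i})$, so they are compatible across the interpolation couple.

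Next invoke the vector-valued weighted Stein--Weiss identity
\[
[L_{p_{0}}(w_{1}^{0};X_{0}),L_{p_{1}}(w_{1}^{1};X_{1})]_{[\theta]} = L_{p}(w_{1};[X_{0},X_{1}]_{[\theta]}),
\]
whose scalar counterpart is classical (Bergh-L\"ofstr\"om, Theorem 5.5.3) and whose $X$-valued extension runs by the same three-lines argument applied to $X_{0}\cap X_{1}$-valued simple functions, combined with the analytic family $F(z,x)=(w_{1}^{0})^{(1-z)/p_{0}}(w_{1}^{1})^{z/p_{1}}(x)\,g(x)$. Specializing to $X_{i}=\ell^{\gamma_{i}}_{2}$ and using the Hilbert-sequence case $[\ell^{\gamma_{0}}_{2},\ell^{\gamma_{1}}_{2}]_{[\theta]}=\ell^{\gamma}_{2}$ (a direct consequence of Calder\'on's product formula on weighted $\ell_{2}$) gives
\[
[L_{p_{0}}(w_{1}^{0};\ell^{\gamma_{0}}_{2}),L_{p_{1}}(w_{1}^{1};\ell^{\gamma_{1}}_{2})]_{[\theta]} = L_{p}(w_{1};\ell^{\gamma}_{2}).
\]

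Applying the retraction--coretraction lemma to the compatible pair $(R,S)$ constructed above yields
\[
[H^{\gamma_{0}}_{p_{0}}(w_{1}^{0}),H^{\gamma_{1}}_{p_{1}}(w_{1}^{1})]_{[\theta]} = R\big(L_{p}(w_{1};\ell^{\gamma}_{2})\big) = H^{\gamma}_{p}(w_{1}),
\]
which is \eqref{eqn 01.06.14:29}. The main obstacle is the vector-valued Stein--Weiss step with two distinct $A_{p_{i}}$ weights and two distinct target spaces $X_{0}\neq X_{1}$: one must choose a dense subclass that sits simultaneously in both endpoint spaces (say, bounded compactly supported $X_{0}\cap X_{1}$-valued functions, which are dense here because the weights are locally integrable and locally bounded away from $0$) and produce an analytic family whose endpoint norms are controlled by the geometric-mean weight formula $w_{1}^{1/p}=(w_{1}^{0})^{(1-\theta)/p_{0}}(w_{1}^{1})^{\theta/p_{1}}$. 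Once this step is in place, the remainder is a transcription of classical retraction arguments.
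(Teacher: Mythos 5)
Your proposal follows essentially the same route as the paper: realize $H^{\gamma}_{p}(w_{1})$ as a retract of the $\ell^{\gamma}_{2}$-valued weighted Lebesgue space via the Littlewood--Paley operator from Theorem \ref{thm 09.17.21:54}, interpolate the latter using the change-of-weight/change-of-sequence-space identity, and transfer back by the retraction--coretraction lemma. The only cosmetic difference is that the paper builds its retraction from the $\Psi_{j}$ themselves (via $\Psi_{j}=\Psi_{j}\ast(\Psi_{j-1}+\Psi_{j}+\Psi_{j+1})$) rather than introducing an auxiliary system $\tilde{\Psi}_{j}$, and it cites Triebel's Theorem 1.18.5 together with Bergh--L\"ofstr\"om 5.6.3 for the vector-valued weighted Stein--Weiss step that you outline by hand.
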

\begin{proof}
For $\mu\in \bR$, define $l^{\mu}_{2}$ as follows;
\begin{equation*}
l^{\mu}_{2}:=\{a=(a_{1},a_{2},\dots):\|a\|_{l^{\mu}_{2}}:=(\sum_{j=0}^{\infty}2^{\mu j}|a_{j}|^{2})^{1/2}<\infty\}.
\end{equation*}
By Theorem \ref{thm 09.17.21:54} (i) if we define $(\mathcal{I}u)_{j}=\Psi_{j}\ast u$ ($j=0,1,2,\dots$), then for any $1<q<\infty$, $w\in A_{q}(\bR^{d})$, and $u\in H^{\mu}_{q}(w)$, we have $\mathcal{I}u\in L_{q}(l^{\mu}_{2};wdx)$. This implies that
$$
\mathcal{I}[H^{\gamma_{0}}_{p_{0}}(w^{0}_{1})+H^{\gamma_{1}}_{p_{1}}(w^{1}_{1})]\subset [L_{p_{0}}(l^{\gamma_{0}}_{2};w^{0}_{1}dx)+L_{p_{1}}(l^{\gamma_{1}}_{2};w^{1}_{1}dx)].
$$
Now  define a map $\mathcal{P}$ on $L_{p_{0}}(l_{2}^{\gamma_{0}};w^{0}_{1}dx)+L_{p_{1}}(l_{2}^{\gamma_{1}};w^{1}_{1}dx)$ by 
$$
\mathcal{P}u=(\Psi_{0}+\Psi_{1})\ast u_{0}+\sum_{j=1}^{\infty}(\Psi_{j-1}+\Psi_{j}+\Psi_{j+1})\ast u_{j}.
$$
Since
\begin{equation}\label{eqn 02.09.14:34}
\begin{gathered}
\Psi_{0}=\Psi_{0}\ast(\Psi_{0}+\Psi_{1}),
\\
\Psi_{j}=\Psi_{j}\ast(\Psi_{j-1}+\Psi_{j}+\Psi_{j+1}),\quad j=1,2,\dots,
\\
\Psi_{i}\ast\Psi_{j}=0 \quad \forall\, |i-j|\geq2,
\end{gathered}
\end{equation}
it follows that
\begin{equation*}
\begin{aligned}
\Psi_{0}\ast\mathcal{P}u&=\Psi_{0}\ast u_{0}+ (\Psi_{0}+\Psi_{1})\ast\Psi_{0}\ast u_{1},
\\
\Psi_{j}\ast\mathcal{P}u&=(\Psi_{j-1}+\Psi_{j})\ast \Psi_{j}\ast u_{j-1}
\\
&\quad + \Psi_{j}\ast u_{j}+ (\Psi_{j}+\Psi_{j+1})\ast\Psi_{j}\ast u_{j+1} \quad j=1,2,\dots.
\end{aligned}
\end{equation*}
Therefore, we have for $i=0,1$
\begin{equation*}
\begin{aligned}
&\|(\sum_{j=0}^{\infty}2^{\gamma_{i} j}\Psi_{j}\ast\mathcal{P}u|^{2})^{1/2}\|_{L_{p_{i}}(w^{i}_{1})}
\\
&\leq \||\sum_{j=0}^{\infty}|\Psi_{j}\ast 2^{\gamma_{i} j}u_{j}|^{2}|^{1/2}\|_{L_{p_{i}}(w^{i}_{1})}
\\
&\quad+2^{-\gamma_{i}} \|(\sum_{j=0}^{\infty}|(\Psi_{j+1}+\Psi_{j})\ast\Psi_{j}\ast 2^{\gamma_{i} (j+1)}u_{j+1}|^{2})^{1/2}\|_{L_{p_{i}}(w^{i}_{1})}
\\
&\quad+2^{\gamma_{i}} \|(\sum_{j=1}^{\infty}|(\Psi_{j-1}+\Psi_{j})\ast\Psi_{j}\ast 2^{\gamma_{i} (j-1)}u_{j-1})^{2}|^{1/2}\|_{L_{p_{i}}(w^{i}_{1})}.
\end{aligned}
\end{equation*}
If we apply \eqref{eqn 09.18.20:05}, then it follows that
$$
\|(\sum_{j=0}^{\infty}2^{\gamma_{i} j}\Psi_{j}\ast\mathcal{P}u|^{2})^{1/2}\|_{L_{p_{i}}(w^{i}_{1})} \leq N \|u\|_{L_{p_{i}}(l^{\gamma_{i}}_{2},w^{i}_{1})} \quad i=0,1,
$$
and this gives $\mathcal{P}u\in [H^{\gamma_{0}}_{p_{0}}(w^{0}_{1})+H^{\gamma_{1}}_{p_{1}}(w^{1}_{1})]$ by Theorem \ref{thm 09.17.21:54} (ii). Also, one can check that $\mathcal{P}\mathcal{I}$ is the indentity map defined on $H^{\mu}_{q}(w)$, and on  $[H^{\gamma_{0}}_{p_{0}}(w^{0}_{1})+H^{\gamma_{1}}_{p_{1}}(w^{1}_{1})]$ by using \eqref{eqn 02.09.14:34}.  

On the other hand, by \cite[Theorem 1.18.5]{triebel1977}, and \cite[Theorem 5.6.3]{bergh2012interpolation} we have
\begin{equation*}
\begin{gathered}
\quad[L_{p_{0}}(A_{0};w^{0}_{1}dx),L_{p_{1}}(A_{1};w^{1}_{1}dx)]_{[\theta]}=L_{p}([A_{0},A_{1}]_{[\theta]};w_{1}dx),
\\
[l^{\gamma_{0}}_{2},l^{\gamma_{1}}_{2}]_{[\theta]}=l^{\gamma}_{2},
\end{gathered}
\end{equation*}
where $A_{0}$, and $A_{1}$ are arbitrary Banach spaces. Hence, it follows that 
\begin{equation*}
[L_{p_{0}}(l^{\gamma_{0}}_{2};w^{0}_{1}dx),L_{p_{1}}(l^{\gamma_{1}}_{2};w^{1}_{1}dx)]_{[\theta]}=L_{p}(l^{\gamma}_{2},w_{1}).
\end{equation*}
Also by \cite[Exercise 3.13.18]{bergh2012interpolation}, we have
\begin{equation*}
\begin{gathered}
\mathcal{I}:[H^{\gamma_{0}}_{p_{0}}(w^{0}_{1}),H^{\gamma_{1}}_{p_{1}}(w^{1}_{1})]_{[\theta]}\to[L_{p_{0}}(l^{\gamma_{0}}_{2};w^{0}_{1}dx),L_{p_{1}}(l^{\gamma_{1}}_{2};w^{1}_{1}dx)]_{[\theta]},
\\
\mathcal{P}:[L_{p_{0}}(l^{\gamma_{0}}_{2};w^{0}_{1}dx),L_{p_{1}}(l^{\gamma_{1}}_{2};w^{1}_{1}dx)]_{[\theta]}\to[H^{\gamma_{0}}_{p}(w_{1}),H^{\gamma_{1}}_{p}(w_{1})]_{[\theta]},
\\
\mathcal{P}\mathcal{I}[H^{\gamma_{0}}_{p_{0}}(w^{0}_{1}),H^{\gamma_{1}}_{p_{1}}(w^{1}_{1})]_{[\theta]}=[H^{\gamma_{0}}_{p_{0}}(w^{0}_{1}),H^{\gamma_{1}}_{p_{1}}(w^{1}_{1})]_{[\theta]}.
\end{gathered}
\end{equation*}
Therefore, we have
\begin{equation*}
\begin{aligned}
H^{\gamma}_{p}(w_{1}) &=\mathcal{P}\mathcal{I}H^{\gamma}_{p}(w_{1})
\\
&\subset \mathcal{P}L_{p}(l^{\gamma}_{2};w_{1}dx)=\mathcal{P}[L_{p_{0}}(l^{\gamma_{0}}_{2};w^{0}_{1}dx),L_{p_{1}}(l^{\gamma_{1}}_{2};w^{1}_{1}dx)]_{[\theta]}
\\
&\subset [H^{\gamma_{0}}_{p_{0}}(w^{0}_{1}),H^{\gamma_{1}}_{p_{1}}(w^{1}_{1})]_{[\theta]},
\end{aligned}
\end{equation*}
and
\begin{equation*}
\begin{aligned}
{[H^{\gamma_{0}}_{p_{0}}(w^{0}_{1}),H^{\gamma_{1}}_{p_{1}}(w^{1}_{1})]}_{[\theta]}&=\mathcal{P}\mathcal{I}[H^{\gamma_{0}}_{p_{0}}(w^{0}_{1}),H^{\gamma_{1}}_{p_{1}}(w^{1}_{1})]_{[\theta]}
\\
&\subset \mathcal{P}[L_{p_{0}}(l^{\gamma_{0}}_{2};w^{0}_{1}dx),L_{p_{1}}(l^{\gamma_{1}}_{2};w^{1}_{1}dx)]_{[\theta]}=\mathcal{P}L_{p}(l^{\gamma}_{2};w_{1}dx)
\\
&\subset H^{\gamma}_{p}(w_{1}).
\end{aligned}
\end{equation*}
Therefore, we have \eqref{eqn 01.06.14:29}, and the theorem is proved.
\end{proof}

\begin{remark}\label{rmk 01.14.15:40}

(i) Note that by H\"older's inequality, we have 
$$
[w_{1}]_{p}\leq [w^{0}_{1}]^{p(1-\theta)/p_{0}}_{p_{0}}[w^{1}_{1}]^{p\theta/p_{1}}_{p_{1}},
$$
and thus $w_{1}$ is also in $A_{p}(\bR^{d})$.

(ii) From Theorem \ref{thm 09.24.13:17}, and the definition of complex interpolation space,
$$
\|u\|_{H^{\gamma}_{p}(w_{1})} = \|u\|_{[H^{\gamma_{0}}_{p_{0}}(w^{0}_{1}),H^{\gamma_{1}}_{p_{1}}(w^{1}_{1})]_{[\theta]}} \leq \|u\|^{1-\theta}_{H^{\gamma_{0}}_{p_{0}}(w^{0}_{1})}\|u\|^{\theta}_{H^{\gamma_{1}}_{p_{1}}(w^{1}_{1})}.
$$
Therefore, for any $\varepsilon>0$, we have
\begin{equation}\label{eqn 01.14.16:41}
\|u\|_{H^{\gamma}_{p}(w_{1})} \leq \varepsilon\|u\|_{H^{\gamma_{0}}_{p_{0}}(w^{0}_{1})} + N(\varepsilon,\gamma_{0},\gamma_{1},\theta)\|u\|_{H^{\gamma_{1}}_{p_{1}}(w^{1}_{1})}.
\end{equation}
In particular, if $w_{1}=w^{0}_{1}=w^{1}_{1}=1$ this is standard interpolation inequality (see e.g. \cite{triebel1977,bergh2012interpolation}).
\end{remark}

\begin{lemma}\label{lem 01.06.16:05}
Let $1<p<\infty$, $\gamma\in\bR$, $w_{1}\in A_{p}(\mathbb{R}^{d})$, and let $a\in B^{|\gamma|}$. Then for any $u\in H^{\gamma}_{p}(w_{1})$, we have $au\in H^{\gamma}_{p}(w_{1})$, and
\begin{equation}\label{eqn 12.30.15:27}
\|au\|_{H^{\gamma}_{p}(w_{1})} \leq N \|a\|_{B^{|\gamma|}} \|u\|_{H^{\gamma}_{p}(w_{1})},
\end{equation}
where the constant $N$ depends only on $d,p,\gamma$, and  $|a|_{B^{|\gamma|}}$.
\end{lemma}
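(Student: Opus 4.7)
The plan is to prove the estimate first for integer values of $\gamma$ (directly for $\gamma\geq 0$, by duality for $\gamma\leq 0$), and then extend to all real $\gamma$ by complex interpolation based on Theorem \ref{thm 09.24.13:17}.

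For $\gamma = m$ a nonnegative integer, I would use the equivalence $\|v\|_{H^m_p(w_1)}\sim\sum_{|\beta|\leq m}\|D^\beta v\|_{L_p(w_1)}$ from Remark \ref{rmk 05.08.1}(i) together with the Leibniz rule: every factor $D^\alpha a$ with $|\alpha|\leq m$ is in $L_\infty$ with norm $\leq N\|a\|_{B^m}$, since $B^0 = L_\infty$ and $B^m = C^{m-1,1}$ for $m\geq 1$ has weak derivatives of order up to $m$ in $L_\infty$. For $\gamma = -m$ a nonpositive integer, the duality pairing $H^{-m}_p(w_1)^\ast = H^m_{p'}(\tilde w_1)$ with $\tilde w_1 = w_1^{-1/(p-1)}\in A_{p'}(\bR^d)$ (see Remark \ref{rmk 05.08.1}(i)), combined with the adjoint identity $(au,\phi)=(u,a\phi)$ verified first on smooth data and extended by density, reduces the estimate to the nonnegative-integer case applied to $a\phi\in H^m_{p'}(\tilde w_1)$.

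For general non-integer $\gamma$, I pick two integers $m_0<m_1$ with $|\gamma|\in(m_0,m_1)$ such that $B^{|\gamma|}\hookrightarrow B^{m_i}$ with norms controlled by $\|a\|_{B^{|\gamma|}}$ for $i=0,1$. The embedding into $B^{m_0}$ is automatic (H\"older functions of higher order are of any lower order); for $B^{m_1}$ I take $m_1=\lceil|\gamma|\rceil$ and choose the parameter $\kappa'$ entering the definition $B^{|\gamma|}=C^{|\gamma|+\kappa'}$ so that $|\gamma|+\kappa'>m_1$, which forces the $m_1$-th derivatives to be bounded and hence yields the embedding $C^{|\gamma|+\kappa'}\hookrightarrow C^{m_1-1,1}=B^{m_1}$. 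The integer cases just proved then show that the multiplication operator $T_a:v\mapsto av$ is bounded on $H^{\pm m_0}_p(w_1)$ and $H^{\pm m_1}_p(w_1)$ (signs matching that of $\gamma$) with operator norms $\leq N\|a\|_{B^{|\gamma|}}$. Since Theorem \ref{thm 09.24.13:17} identifies $H^\gamma_p(w_1)=[H^{\pm m_0}_p(w_1),H^{\pm m_1}_p(w_1)]_{[\theta]}$ at the unique $\theta\in(0,1)$ with $(1-\theta)(\pm m_0)+\theta(\pm m_1)=\gamma$, the classical complex interpolation theorem for linear operators transfers the bound on $T_a$ to $H^\gamma_p(w_1)$ with the same control.

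The main technical obstacle is justifying the embedding $B^{|\gamma|}\hookrightarrow B^{m_1}$ in the non-integer case, which hinges on a careful choice of the parameter $\kappa'$; once this embedding is secured, Theorem \ref{thm 09.24.13:17} does all the heavy lifting and the rest is bookkeeping.
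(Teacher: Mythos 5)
Your integer-case reduction (Leibniz for $\gamma\in\{0,1,2,\dots\}$, duality for $\gamma\in\{-1,-2,\dots\}$) is correct and matches the paper. The gap is in the non-integer case: you freeze the coefficient in $B^{|\gamma|}$ at both interpolation endpoints and interpolate only in the Sobolev index, which forces you to claim $B^{|\gamma|}\hookrightarrow B^{m_1}$ with $m_1=\lceil|\gamma|\rceil$. This embedding fails whenever $|\gamma|+\kappa'<m_1$. For instance, take $|\gamma|=3/2$ and $\kappa'=1/5$; then $B^{|\gamma|}=C^{1.7}$, and a compactly supported function whose first derivative behaves like $|x|^{0.7}$ near the origin lies in $C^{1.7}$ but not in $C^{1,1}=B^{2}$. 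You anticipate this and propose to ``choose $\kappa'$'' large enough that $|\gamma|+\kappa'>m_1$, but $\kappa'$ is a parameter fixed once and for all in \eqref{eqn 01.06.14:42}, and it is this same $\kappa'$ that enters Assumption~\ref{asm 07.16.1}~(iv). Re-choosing it proves a strictly weaker lemma, one requiring more regularity of $a$ than the space $B^{|\gamma|}$ as defined, so it is not usable in the context where the lemma is invoked.

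The paper sidesteps this by interpolating in the coefficient space \emph{and} the Sobolev index simultaneously, viewing multiplication as a bilinear map $T(a,u)=au$. Writing $\gamma=k+\nu$ with $k\in\bZ$, $\nu\in(0,1)$, one has $\mathcal{C}^{k+\kappa'}\subset C^{k-1,1}=B^{k}$ and $\mathcal{C}^{k+1+\kappa'}\subset C^{k,1}=B^{k+1}$ by Remark~\ref{rmk 01.06.14:48}, so the integer-case bounds yield $T:\mathcal{C}^{k+\kappa'}\times H^{k}_{p}(w_1)\to H^{k}_{p}(w_1)$ and $T:\mathcal{C}^{k+1+\kappa'}\times H^{k+1}_{p}(w_1)\to H^{k+1}_{p}(w_1)$. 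Bilinear complex interpolation (\cite[Theorem 4.4.1]{bergh2012interpolation}) together with $[\mathcal{C}^{k+\kappa'},\mathcal{C}^{k+1+\kappa'}]_{[\nu]}=\mathcal{C}^{\gamma+\kappa'}$ and Theorem~\ref{thm 09.24.13:17} gives a multiplier bound with norm controlled by $|a|_{\mathcal{C}^{\gamma+\kappa'}}$, and $\mathcal{C}^{\gamma+\kappa'}$ is precisely $B^{|\gamma|}$ (up to equivalence) because $|\gamma|+\kappa'$ is non-integral. In short, the coefficient regularity is allowed to ``interpolate down'' with the Sobolev order rather than being frozen at the rougher endpoint; that is the ingredient your argument is missing, and it cannot be recovered by tweaking $\kappa'$.
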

\begin{proof}
If $\gamma=0,1,2,\dots$, then by Remark \ref{rmk 05.08.1} (i) and product rule we have
\begin{equation}\label{eqn 12.30.16:19}
\|au\|_{H^{\gamma}_{p}(w_{1})} \leq N(d,p,\gamma) |a|_{B^{|\gamma|}}\|u\|_{H^{\gamma}_{p}(w_{1})}.
\end{equation}

Now assume that $\gamma=-1,-2,\dots$. Take $p'$ and $\tilde{w}_{1}$ from \eqref{dual} with $w_{1}$ in place of $w$. Then for any $v\in H^{-\gamma}_{p'}(\tilde{w}_{1})$ with $\|v\|_{H^{-\gamma}_{p'}(\tilde{w}_{1})}\leq 1$ we have
\begin{equation*}
\begin{aligned}
\left| (au,v) \right| &= \left| (u,av) \right| =  \left|\int_{\bR^{d}} (1-\Delta)^{\gamma/2}u(x) (1-\Delta)^{-\gamma/2}(av)(x) dx \right|
\\
&\leq \|u\|_{H^{\gamma}_{p}(w_{1})} \|av\|_{H^{-\gamma}_{p'}(\tilde{w}_{1})} \leq N |a|_{B^{|\gamma|}} \|u\|_{H^{\gamma}_{p}(w_{1})},
\end{aligned}
\end{equation*}
where the constant $N$ depends only on $d,p,\gamma$ Note that since $\Ccinf (\bR^{d})$ is dense in $H^{-\gamma}_{p'}(\tilde{w}_{1})$ one can define $(u,av)$ above (recall \eqref{eqn 5.12.2}). Therefore, we have 
\begin{equation}\label{eqn 12.30.16:21}
\|au\|_{H^{\gamma}_{p}(w_{1})} \leq N(d,p,\gamma) |a|_{B^{|\gamma|}}\|u\|_{H^{\gamma}_{p}(w_{1})}.
\end{equation}

Finally, assume that $\gamma$ is not an integer. Take $\kappa'$ from \eqref{eqn 01.06.14:42}, and let $\gamma=k+\nu$, where $k\in \bZ$ and $\nu\in(0,1)$. If we define a bilinear map 
$$
T : ( \mathcal{C}^{k+1+\kappa'}+\mathcal{C}^{k+\kappa'} )\times ( H^{k+1}_{p}(w_{1}) + H^{k}_{p}(w_{1}) ) \to ( H^{k+1}_{p}(w_{1}) +H^{k}_{p}(w_{1}) )
$$ 
by $T(a,u)=au$, then by \eqref{eqn 12.30.16:19}, \eqref{eqn 12.30.16:21}, and Remark \ref{rmk 01.06.14:48}, we have that for $a_{0}\in \mathcal{C}^{k+\kappa'},a_{1}\in \mathcal{C}^{k+1+\kappa'}$, and $u_{0}\in H^{k}_{p}(w_{1}),u_{1}\in H^{k+1}_{p}(w_{1})$, 
\begin{equation*}
\begin{gathered}
\|T(a_{0},u_{0})\|_{H^{k}_{p}(w_{1})} \leq N |a_{0}|_{\mathcal{C}^{k+\kappa'}} \|u_{0}\|_{H^{k}_{p}(w_{1})}
\\
\|T(a_{1},u_{1})\|_{H^{k+1}_{p}(w_{1})} \leq N |a_{1}|_{\mathcal{C}^{k+1+\kappa'}} \|u_{0}\|_{H^{k+1}_{p}(w_{1})}
\end{gathered}
\end{equation*}
by \eqref{eqn 12.30.16:19} and \eqref{eqn 12.30.16:21}. Therefore, by \cite[Theorem 4.4.1]{bergh2012interpolation} and \eqref{eqn 01.06.14:29} with 
\begin{gather*}
\theta=\nu \quad p_{0}=p_{1}=p, \quad w^{0}_{1}=w^{1}_{1}=w_{1},
\\
\gamma_{0}=k,\quad \gamma_{1}=k+1,
\end{gather*}
we have
$$
\|au\|_{H^{\gamma}_{p}(w_{1})} \leq N |a|_{[\mathcal{C}^{k+\kappa'},\mathcal{C}^{k+1+\kappa'}]_{[\nu]}}\|u\|_{H^{\gamma}_{p}(w_{1})}.
$$
By Remark \ref{rmk 01.06.14:48} and \cite[Theorem 6.4.5]{bergh2012interpolation} we have $[\mathcal{C}^{k+\kappa'},\mathcal{C}^{k+1+\kappa'}]_{[\nu]}=\mathcal{C}^{\gamma+\kappa'}$. Hence, again by Remark \ref{rmk 01.06.14:48} we have
$$
\|au\|_{H^{\gamma}_{p}(w_{1})} \leq N |a|_{\mathcal{C}^{\gamma+\kappa'}}\|u\|_{H^{\gamma}_{p}(w_{1})} \leq  N |a|_{B^{|\gamma|}}\|u\|_{H^{\gamma}_{p}(w_{1})}
$$
for any $a\in B^{|\gamma|}$, and $u\in H^{\gamma}_{p}(w_{1})$. The lemma is proved.
\end{proof}

\begin{remark}
Lemma \ref{lem 01.06.16:05} is called the pointwise multiplier theorem (see \cite[Section 2.8]{triebel1983} for detailed definition and pointwise multiplier theorem without weight).  If $w_{1}=1$, then Lemma \ref{lem 01.06.16:05} is a consequence of \cite[Lemma 5.2 (i)]{kry99analytic}. Note that from the proof, the additional H\"older regularity $\kappa'$ can not be easily removed when $\gamma$ is not an integer.
\end{remark}

\vspace{3mm}

Now we investigate uniform localization in $H^{\gamma}_{p}(w_{1})$. The argument comes from \cite{kry94Littlewood} which treats the case $w_{1}=1$.

For $\lambda>0$, define $L_{\lambda}=(\lambda-\Delta)$, and let $G_{m}(x)$ be the Green's function of the operator $(1-\Delta)^{m}$ for $m>0$. For multi-indices $\beta$ such that $|\beta|\leq 2m-1$, define
\begin{equation}\label{eqn 01.07.15:43}
G_{m,\lambda,\beta}(x):=\lambda^{(d-|\beta|)/2}D^{\beta}(G_{m}(\lambda^{1/2}x))=\lambda^{d/2}D^{\beta}G_{m}(\lambda^{1/2}x).
\end{equation}
Since $|\beta|\leq 2m-1$, $G_{m,\lambda,\beta}$ is integrable (see \cite[Section 12.7]{krylov2008lectures}) and $\|G_{m,\lambda,\beta}\|_{L_{1}}$ is independent of $\lambda$. Also by change of variables, for any $\phi\in \mathcal{S}(\bR^{d})$,
\begin{equation*}
\begin{aligned}
&\int_{\bR^{d}}(\lambda+|\xi|^{2})^{-m}e^{i\xi\cdot x} \mathcal{F}\phi(\xi)d\xi
\\
&=\lambda^{-m}\int_{\bR^{d}}(1+|\xi|^{2})^{-m}e^{i\xi\cdot (\lambda^{1/2} x)}\mathcal{F}(\phi(\lambda^{-1/2}\cdot))(\xi)d\xi.
\end{aligned}
\end{equation*}
This implies that the Green's function $G_{m,\lambda}$ of $(\lambda-\Delta)^{m}$ satisfies
\begin{equation*}
\begin{aligned}
\int_{\bR^{d}}G_{m,\lambda}(x-y)\phi(y)dy&=\lambda^{-m}\int_{\bR^{d}}G_{m}(\lambda^{1/2}x-y)\phi(\lambda^{-1/2}y)dy
\\
&=\lambda^{-m+d/2}\int_{\bR^{d}}G_{m}(\lambda^{1/2}x-\lambda^{1/2}y)\phi(y)dy.
\end{aligned}
\end{equation*}
From this, we have
\begin{gather}
\nonumber
G_{m,\lambda}(x)=\lambda^{-m+d/2}G_{m}(\lambda^{1/2}x) \quad \forall\, x\in \bR^{d},
\\
\label{eqn 01.25.15:34}
G_{m,\lambda,\beta}(x)=\lambda^{m-|\beta|/2}D^{\beta}G_{m,\lambda}(x) \quad \forall x\in\bR^{d},
\\
\label{eqn 01.06.20:43}
L^{m}_{\lambda}\phi(x)=\lambda^{m}[(1-\Delta)^{m}\phi(\lambda^{-1/2}\cdot)](\lambda^{1/2}x) \quad \forall\, x\in \bR^{d},
\end{gather}
and
\begin{gather}
\label{eqn 01.08.10:50}
\|L^{m}_{\lambda}\phi\|_{L_{p}(w_{1})} = \lambda^{m-\frac{d}{2p}}\|(1-\Delta)^{m}\phi_{\lambda}\|_{L_{p}(w^{\lambda}_{1})} \quad \forall \, m\in \bR,
\end{gather}
where $\phi_{\lambda}(x)=\phi(\lambda^{-1/2}x)$, and $w^{\lambda}_{1}(x)=w_{1}(\lambda^{-1/2}x)$.
\begin{lemma}
Let $1<p<\infty$, $w_{1}\in A_{p}(\bR^{d})$, $m=0,1,2,\dots$, $\lambda\geq1$, and $\varepsilon\in(0,1/2)$. Suppose that $\eta_{k}\in C^{\infty}(\bR^{d})$ for $k=1,2,\dots$, and assume that for any multi-index $\sigma$
$$
\sup_{x\in\bR^{d}}\sum_{k=1}^{\infty}|D^{\sigma}\eta_{k}(x)|\leq M(\sigma).
$$

(i) Then there exist constants $C^{m}_{\beta\sigma}$ such that for any $f\in \cS$
\begin{gather}
\label{eqn 01.06.20:02}
L^{m}_{\lambda}(\eta_{k}f)=\eta_{k}L^{m}_{\lambda}f+\sum_{0<|\sigma|, |\sigma|+|\beta|\leq 2m} C^{m}_{\beta\sigma}\lambda^{-|\sigma|/2}(D^{\sigma}\eta_{k})(G_{m,\lambda,\beta}\ast L^{m}_{\lambda}f),
\\
\label{eqn 01.06.20:02-2}
L^{m}_{\lambda}(\eta_{k}L^{-m}_{\lambda}f)=\eta_{k}f+\sum_{0<|\sigma|, |\sigma|+|\beta|\leq 2m} C^{m}_{\beta\sigma}\lambda^{-|\sigma|/2}(D^{\sigma}\eta_{k})(G_{m,\lambda,\beta}\ast f),
\\
\label{eqn 01.06.20:02-3}
L^{-m}_{\lambda}(\eta_{k}L^{m}_{\lambda}f)=\eta_{k}f+\sum_{0<|\sigma|, |\sigma|+|\beta|\leq 2m} C^{m}_{\beta\sigma}\lambda^{-|\sigma|/2}G_{m,\lambda,\beta}\ast( (D^{\sigma}\eta_{k})f),
\\
\label{eqn 01.06.20:02-4}
L^{-m}_{\lambda}(\eta_{k}f)=\eta_{k}L^{-m}_{\lambda}f+\sum_{0<|\sigma|, |\sigma|+|\beta|\leq 2m} C^{m}_{\beta\sigma}\lambda^{-|\sigma|/2}G_{m,\lambda,\beta}\ast( (D^{\sigma}\eta_{k})L^{-m}_{\lambda}f),
\end{gather}

(ii) For any $f\in \Ccinf(\bR^{d})$ we have
\begin{equation}\label{eqn 01.07.11:13}
\sum_{k=1}^{\infty}\|L^{\pm m/2}_{\lambda}\{L^{\varepsilon}_{\lambda}[\eta_{k}f]-\eta_{k}L^{\varepsilon}_{\lambda}f\}\|^{p}_{L_{p}(w_{1})} \leq N \lambda^{p(\varepsilon-1/2)}\|L^{\pm m/2}_{\lambda}f\|^{p}_{L_{p}(w_{1})},
\end{equation} 
where the constant $N$ depends only on $d,p,m,\varepsilon,M$ and $[w_{1}]_{p}$ 
\end{lemma}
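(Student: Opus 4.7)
\emph{Part (i).} The identities reduce to the Leibniz rule for the differential operator $L^m_\lambda=(\lambda-\Delta)^m$. Expand by the binomial theorem $L^m_\lambda=\sum_{j=0}^m\binom{m}{j}\lambda^{m-j}(-\Delta)^j$ and apply Leibniz to each $(-\Delta)^j(\eta_k f)$; the commutator contribution is precisely the sum of terms $c_{\sigma,\beta,j}(D^\sigma\eta_k)(D^\beta f)$ with $|\sigma|>0$ and $|\sigma|+|\beta|=2j$. For $f\in\cS$, use $D^\beta f=(D^\beta G_{m,\lambda})\ast L^m_\lambda f$ together with the scaling identity \eqref{eqn 01.25.15:34}, $D^\beta G_{m,\lambda}=\lambda^{-m+|\beta|/2}G_{m,\lambda,\beta}$; combining with the $\lambda^{m-j}$ prefactor and $|\sigma|+|\beta|=2j$ produces the $\lambda^{-|\sigma|/2}$ appearing in \eqref{eqn 01.06.20:02}. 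The remaining identities \eqref{eqn 01.06.20:02-2}--\eqref{eqn 01.06.20:02-4} follow by substituting $L^{-m}_\lambda f$ for $f$ in \eqref{eqn 01.06.20:02} and by taking the formal adjoint (using that $G_{m,\lambda}$ is radial with real Fourier transform, cf.\ Remark \ref{rmk 09.18.19:25} (ii)).

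\emph{Part (ii), reduction.} Since $\varepsilon\in(0,1/2)$ is non-integer, part (i) does not apply to $L^\varepsilon_\lambda$ directly; I would rescale and subordinate. Via \eqref{eqn 01.06.20:43} and the change of variables $y=\lambda^{1/2}x$, setting $\tilde f(y)=f(\lambda^{-1/2}y)$, $\tilde\eta_k(y)=\eta_k(\lambda^{-1/2}y)$, $\tilde w_1(y)=w_1(\lambda^{-1/2}y)$, the estimate \eqref{eqn 01.07.11:13} becomes the $\lambda=1$ inequality
$$\sum_{k=1}^\infty\bigl\|(1-\Delta)^{\pm m/2}\bigl[(1-\Delta)^\varepsilon,\tilde\eta_k\bigr]\tilde f\bigr\|^p_{L_p(\tilde w_1)}\leq N\lambda^{-p/2}\bigl\|(1-\Delta)^{\pm m/2}\tilde f\bigr\|^p_{L_p(\tilde w_1)},$$
where $\tilde w_1\in A_p(\bR^d)$ has the same characteristic as $w_1$ by Remark \ref{rmk 09.27.21:52}, and $\sum_k|D^\sigma\tilde\eta_k|\leq M(\sigma)\lambda^{-|\sigma|/2}$. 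Conjugating by the Bessel potential isometry $(1-\Delta)^{\pm m/2}$ further reduces to estimating $[(1-\Delta)^\varepsilon,R_k]$ on $L_p(\tilde w_1)$, where $R_k=(1-\Delta)^{m/2}\tilde\eta_k(1-\Delta)^{-m/2}$ is a zero-order operator still controlled by $\tilde\eta_k$ through Lemma \ref{lem 01.06.16:05}, so that the essential case is $m=0$. Represent the commutator by the Balakrishnan subordination
$$[(1-\Delta)^\varepsilon,\tilde\eta_k]=\frac{\sin\pi\varepsilon}{\pi}\int_0^\infty s^\varepsilon(s+1-\Delta)^{-1}\bigl[(1-\Delta),\tilde\eta_k\bigr](s+1-\Delta)^{-1}ds,$$
where the inner commutator $[(1-\Delta),\tilde\eta_k]=-2\nabla\tilde\eta_k\cdot\nabla-\Delta\tilde\eta_k$ is given by part (i) with $m=1$.

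\emph{Part (ii), estimates and main obstacle.} The $L_p(\tilde w_1)\to L_p(\tilde w_1)$ norms of $(s+1-\Delta)^{-1}$ and $\nabla(s+1-\Delta)^{-1}$ are controlled uniformly in $s$ by $(s+1)^{-1}$ and $(s+1)^{-1/2}$ respectively, via the weighted Mikhlin multiplier theorem (Theorem \ref{thm 01.05.14:43}) applied to the symbols $(s+1+|\xi|^2)^{-1}$ and $i\xi_j(s+1+|\xi|^2)^{-1}$; pointwise multiplication by $D^\sigma\tilde\eta_k$ is handled by Lemma \ref{lem 01.06.16:05}. Pointwise in $x$, each term of the $s$-integrand is bounded by $C|D^\sigma\tilde\eta_k(x)|\cdot|h_s(x)|$ with $\|h_s\|_{L_p(\tilde w_1)}\lesssim(s+1)^{-3/2}\|\tilde f\|_{L_p(\tilde w_1)}$ when $|\sigma|=1$, and by an analogous $(s+1)^{-2}$ bound when $|\sigma|=2$. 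Summing in $k$ uses the elementary pointwise inequality $\sum_k|D^\sigma\tilde\eta_k|^p\leq(M(\sigma)\lambda^{-|\sigma|/2})^{p-1}\sum_k|D^\sigma\tilde\eta_k|\leq(M(\sigma)\lambda^{-|\sigma|/2})^p$, converting the $\ell^p(k)$-sum of $L_p(\tilde w_1)$-norms into a single $L_p(\tilde w_1)$-norm. Integrating in $s$ via Minkowski yields $\int_0^\infty s^\varepsilon(s+1)^{-3/2}ds<\infty$, producing the desired $\lambda^{-p/2}$ factor. The main obstacle is to orchestrate the three layers (the $s$-integral, the $\ell^p(k)$-sum, and the weighted $L_p$-norm) while preserving the sharp $\lambda^{-1/2}$ gain from $\|\nabla\tilde\eta_k\|_\infty$ and controlling the convergence at $s\to\infty$; the latter convergence is precisely where the hypothesis $\varepsilon<1/2$ enters, and the negative-exponent case $-m/2$ requires either duality via Remark \ref{rmk 05.08.1} or a parallel argument based on \eqref{eqn 01.06.20:02-4}.
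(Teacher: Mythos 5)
Your part (i) sketch (binomial expansion, Leibniz, rewrite $D^{\beta}f$ via the Green's function and the scaling identity \eqref{eqn 01.25.15:34}) is the standard proof and matches what the paper delegates to \cite[Lemma 2.1 (i)]{kry94Littlewood}. For part (ii) with $m=0$, you take a genuinely different route from the paper: where the paper uses a heat-kernel (Gaussian subordination) representation of $L^{\varepsilon}_{\lambda}$ and bounds $|\eta_{k}(x-y)-\eta_{k}(x)|$ pointwise before summing in $k$, you use the Balakrishnan resolvent formula and the weighted Mikhlin theorem (Theorem \ref{thm 01.05.14:43}) to get the uniform-in-$s$ resolvent bounds. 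This works, and the convergence of $\int_{0}^{\infty}s^{\varepsilon}(1+s)^{-3/2}ds$ is indeed where $\varepsilon<1/2$ enters, matching the paper's $\int e^{-\lambda t/2}t^{-1/2-\varepsilon}dt$. One phrasing needs repair: the $s$-integrand is \emph{not} bounded pointwise by $|D^{\sigma}\tilde\eta_{k}(x)||h_{s}(x)|$, because the outer resolvent $(s+1-\Delta)^{-1}$ is nonlocal; the correct order of operations is to apply Minkowski in $s$ and the operator bound $\|(s+1-\Delta)^{-1}\|\lesssim(1+s)^{-1}$ first, and only then use the pointwise inequality on the \emph{inner} quantity $[1-\Delta,\tilde\eta_{k}](s+1-\Delta)^{-1}\tilde f$, where $\sum_{k}|D^{\sigma}\tilde\eta_{k}|^{p}\leq (M\lambda^{-|\sigma|/2})^{p}$ does factor out of the $x$-integral. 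That fix is routine.

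The genuine gap is the proposed reduction for $m\neq 0$. You want to conjugate by $(1-\Delta)^{\pm m/2}$ and study $[(1-\Delta)^{\varepsilon},R_{k}]$ with $R_{k}=(1-\Delta)^{m/2}\tilde\eta_{k}(1-\Delta)^{-m/2}$, invoking Lemma \ref{lem 01.06.16:05}. But $R_{k}$ is not a pointwise multiplier, so Lemma \ref{lem 01.06.16:05} does not control the \emph{commutator} $[(1-\Delta)^{\varepsilon},R_{k}]$; moreover $[(1-\Delta)^{\varepsilon},R_{k}]$ is unitarily conjugate to $(1-\Delta)^{m/2}[(1-\Delta)^{\varepsilon},\tilde\eta_{k}](1-\Delta)^{-m/2}$, i.e., you have not reduced anything — you are back where you started. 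Crucially, the $\ell^{p}(k)$-summability, which is the entire point of the lemma, relies on pointwise information about $D^{\sigma}\eta_{k}$, and your conjugation obscures exactly that. The right path (which you flag only as a fallback) is precisely the one the paper uses: apply the part (i) identities \eqref{eqn 01.06.20:02} or \eqref{eqn 01.06.20:02-3} to write $L^{\pm m/2}_{\lambda}\{L^{\varepsilon}_{\lambda}[\eta_{k}f]-\eta_{k}L^{\varepsilon}_{\lambda}f\}$ as the $m=0$ commutator of $g=L^{\pm m/2}_{\lambda}f$ plus correction terms of the \emph{same shape} but with $D^{\sigma}\eta_{k}$ ($|\sigma|\geq 1$) in place of $\eta_{k}$, each preceded by a convolution with $G_{m/2,\lambda,\beta}$; one then checks that $G_{m/2,\lambda,\beta}$ is a Calder\'on--Zygmund kernel with constants uniform in $\lambda\geq 1$ (paper's inequality \eqref{eqn 01.07.20:34}), so these convolutions cost nothing and the correction terms satisfy the $m=0$ estimate with their own $\lambda^{-|\sigma|/2}\leq 1$ gains. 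You also do not address the case of \emph{odd} $m$, where the paper needs the additional norm equivalence \eqref{eqn 01.08.17:38} from \cite{miller1982sobolev} to interpose a half-order and reduce to the even case; your conjugation by $(1-\Delta)^{\pm m/2}$ with $m/2$ a half-integer silently assumes this issue away.
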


\begin{proof}
(i) See \cite[Lemma 2.1 (i)]{kry94Littlewood}

(ii) \textit{Case 1.} $m=0$.
\\
 From the proof of \cite[Lemma 2.1]{kry94Littlewood}, we have
\begin{equation*}
\begin{aligned}
&L^{\varepsilon}_{\lambda}[\eta_{k}f]-\eta_{k}L^{\varepsilon}_{\lambda}f(x)
\\
&\quad=N(d,\varepsilon)\int_{0}^{\infty}e^{-\lambda t/2}\int_{\bR^{d}}t^{-d/2}\psi\left(\frac{y}{t^{1/2}} \right)[\eta_{k}(x-y)-\eta_{k}(x)]f(x-y)dy\frac{dt}{t^{1+\varepsilon}},
\end{aligned}
\end{equation*}
where $\psi(x)=(2\pi)^{-d/2}\exp (-|x|^{2}/2)$. Therefore, we have
\begin{equation*}
\begin{aligned}
&\sum_{k=1}^{\infty}|L^{\varepsilon}_{\lambda}[\eta_{k}f]-\eta_{k}L^{\varepsilon}_{\lambda}f(x)|^{p}
\\
&\quad\leq  N(d,\varepsilon) \sum_{k=1}^{\infty} \left| \int_{0}^{\infty}e^{-\frac{\lambda t}{2}}\int_{\bR^{d}}\left|t^{-\frac{d}{2}}\psi \left(\frac{y}{t^{1/2}} \right)[\eta_{k}(x-y)-\eta_{k}(x)]f(x-y)\right|dy  \frac{dt}{t^{1+\varepsilon}}\right|^{p}
\\
&\quad \leq N(d,\varepsilon,p,M)\left| \int_{0}^{\infty}e^{-\lambda t/2}t^{-1/2-\varepsilon} \int_{\bR^{d}}\left|t^{-d/2}\phi\left(\frac{y}{t^{1/2}}\right)f(x-y)\right|dy dt\right|^{p},
\end{aligned}
\end{equation*}
where $\phi(x)=\psi(x/2)$. Since $\varepsilon\in(0,1/2)$, the function $e^{-\lambda t/2}t^{-1/2-\varepsilon}$ is integrable on $(0,\infty)$ in $t$ and the integral equals $N(\varepsilon)\lambda^{\varepsilon-1/2}$. Therefore, by Jensen's inequality we have
\begin{equation*}
\begin{aligned}
&\sum_{k=1}^{\infty}\|L^{\varepsilon}_{\lambda}[\eta_{k}f]-\eta_{k}L^{\varepsilon}_{\lambda}f\|^{p}_{L_{p}(w_{1})}
\\
&\leq N(d,\varepsilon,M)\lambda^{(\varepsilon-1/2)(p-1)}  \int_{0}^{\infty}e^{-\lambda t/2}\int_{\bR^{d}}\left|\phi_{t}\ast f (x)\right|^{p}w_{1}(x)dx\frac{dt}{t^{1/2+\varepsilon}},
\end{aligned}
\end{equation*}
where $\phi_{t}(x)=t^{-d/2}\phi(t^{-1/2}x)$. Since $\phi\in \cS(\bR^{d})$, by Remark \ref{rmk 05.08.1} (ii) we have
\begin{equation*}
\begin{aligned}
\sum_{k=1}^{\infty}\|L^{\varepsilon}_{\lambda}[\eta_{k}f]-\eta_{k}L^{\varepsilon}_{\lambda}f\|^{p}_{L_{p}(w_{1})} &\leq N \lambda^{(\varepsilon-1/2)(p-1)} \int_{0}^{\infty} e^{-\lambda t/2}\frac{dt}{t^{1/2+\varepsilon}}\|f\|^{p}_{L_{p}(w_{1})}
\\
& \leq N \lambda^{(\varepsilon-1/2)p}\|f\|^{p}_{L_{p}(w_{1})},
\end{aligned}
\end{equation*}
where the constant $N$ depends only on $d,p,\varepsilon,M$, and $[w_{1}]_{p}$. This gives \eqref{eqn 01.07.11:13} for $m=0$. 

\textit{Case 2.} $m$ is an even integer. 
\\
By \eqref{eqn 01.06.20:02-3} we have
\begin{equation}\label{eqn 01.07.16:58}
\begin{aligned}
&L^{-m/2}_{\lambda}\{L^{\varepsilon}_{\lambda}[\eta_{k}f]-\eta_{k}L^{\varepsilon}_{\lambda}f\}
\\
&\quad = L^{\varepsilon}_{\lambda}(L^{-m/2}_{\lambda}[\eta_{k}L^{m/2}_{\lambda}g])-L^{-m/2}_{\lambda}[\eta_{k}L^{m/2}_{\lambda}(L^{\varepsilon}_{\lambda}g)]
\\
&\quad = L^{\varepsilon}_{\lambda}[\eta_{k}g]-\eta_{k}L^{\varepsilon}_{\lambda}g
\\
&\quad \quad+ \sum_{0<|\sigma|, |\sigma|+|\beta|\leq m} C^{m/2}_{\beta\sigma}\lambda^{-|\sigma|/2}G_{\frac{m}{2},\lambda,\beta}\ast\{L^{\varepsilon}_{\lambda}[(D^{\sigma}\eta_{k})g]-(D^{\sigma}\eta_{k}L^{\varepsilon}_{\lambda}g)\}
\\
&\quad = L^{\varepsilon}_{\lambda}[\eta_{k}g]-\eta_{k}L^{\varepsilon}_{\lambda}g
\\
&\quad \quad+ \sum_{0<|\sigma|, |\sigma|+|\beta|\leq m} C^{m/2}_{\beta\sigma}G_{\frac{m}{2},\lambda,\beta}\ast\{L^{\varepsilon}_{\lambda}[(D^{\sigma}\eta_{k})g]-(D^{\sigma}\eta_{k}L^{\varepsilon}_{\lambda}g)\}
\end{aligned}
\end{equation}
where $g=L^{-m/2}_{\lambda}f$, and $G_{\frac{m}{2},\lambda,\beta}(x)=\lambda^{d/2}D^{\beta}G_{m/2}(\lambda^{1/2}x)$ taken from \eqref{eqn 01.07.15:43} (recall that $1\leq \lambda$). By \cite[Theorem 12.7.1, Theorem 12.7.4]{krylov2008lectures}, $G_{m/2}$ is infinitely differentiable function whose derivatives of any order decay exponentially fast at infinity, and it satisfies
\begin{equation*}
|D^{\beta}G_{m/2}(x)| \leq N \frac{1}{|x|^{d-m+|\beta|}} \quad |\beta|\leq m+1
\end{equation*}
near the origin. Therefore, $G_{\frac{m}{2},\lambda,\beta}$ satisfies
\begin{equation*}
|G_{\frac{m}{2},\lambda,\beta}(x)| = \lambda^{d/2}|D^{\beta}G_{m/2}(\lambda^{1/2}x)| \leq N \frac{1}{|x|^{d}} \quad \forall\,x\in\bR^{d},
\end{equation*}
and
\begin{equation*}
\begin{aligned}
|G_{\frac{m}{2},\lambda,\beta}(x-y)-G_{\frac{m}{2},\lambda,\beta}(x'-y)| &= |x-x'| |\nabla G_{\frac{m}{2},\lambda,\beta}(\theta(x,x',y))| 
\\
&\leq N \lambda^{(d+1)/2}\frac{|x-x'|}{\lambda^{(d+1)/2}|\theta(x,x',y)|^{d+1}}
\\
&\leq N \frac{|x-x'|}{(|x-y|+|x-y'|)^{d+1}}
\end{aligned}
\end{equation*}
whenever $|x-x'|\leq \frac{1}{2}\max{\{|x-y|,|x'-y|\}}$, where $\theta(x,x',y)$ is a point in the line segment connecting $x-y$, and $x'-y$ (recall \eqref{eqn 01.07.16:50}). Therefore, by \cite[Corollary 9.4.7]{grafakos2009modern}, we have
\begin{equation}\label{eqn 01.07.20:34}
\|G_{\frac{m}{2},\lambda,\beta}\ast f\|_{L_{p}(w_{1})} \leq N(d,p,m,[w_{1}]_{p}) \|f\|_{L_{p}(w_{1})}
\end{equation}
for any $f\in L_{p}(w_{1})$. Hence, applying the result for $m=0$ to \eqref{eqn 01.07.16:58} we prove \eqref{eqn 01.07.11:13} for $-m$ when $m$ is an even integer. For $+m$, by \eqref{eqn 01.06.20:02} if we let $g=L^{m/2}_{\lambda}f$,
\begin{equation*}\label{eqn 01.07.16:58-2}
\begin{aligned}
&L^{m/2}_{\lambda}\{L^{\varepsilon}_{\lambda}[\eta_{k}f]-\eta_{k}L^{\varepsilon}_{\lambda}f\}
\\
&\quad = L^{\varepsilon}_{\lambda}(L^{m/2}_{\lambda}[\eta_{k}f])-L^{m/2}_{\lambda}[\eta_{k}L^{\varepsilon}_{\lambda}f]
\\
&\quad \leq L^{\varepsilon}_{\lambda}[\eta_{k}g]-\eta_{k}L^{\varepsilon}_{\lambda}g
\\
&\quad \quad +  \sum_{0<|\sigma|, |\sigma|+|\beta|\leq m} C^{m/2}_{\beta\sigma} \{ L^{\varepsilon}_{\lambda} [(D^{\sigma}\eta_{k})G_{\frac{m}{2},\lambda,\beta}\ast g] - (D^{\sigma}\eta_{k})L^{\varepsilon}_{\lambda}G_{\frac{m}{2},\lambda,\beta} \ast g \}.
\end{aligned}
\end{equation*}
Therefore, by the result for $m=0$ and \eqref{eqn 01.07.20:34}, we have \eqref{eqn 01.07.11:13} for $+m$.

\textit{Case 3.} $m$ is an odd integer.
\\
By \cite[Theorem 3.3]{miller1982sobolev} for any $n$, we have
\begin{equation*}
\begin{gathered}
\|(1-\Delta)^{n}f\|_{L_{p}(w_{1})} \sim \|(1-\Delta)^{n-1/2}f\|_{L_{p}(w_{1})} + \sum_{i=1}^{d} \|(1-\Delta)^{n-1/2}f_{x^{i}}\|_{L_{p}(w_{1})},
\\
\|(1-\Delta)^{n-1/2}f\|_{L_{p}(w_{1})} \leq \|(1-\Delta)^{n}f\|_{L_{p}(w_{1})}.
\end{gathered}
\end{equation*}
Using these and \eqref{eqn 01.08.10:50}, we have
\begin{equation}\label{eqn 01.08.17:38}
\begin{gathered}
\|L^{n}_{\lambda}f\|_{L_{p}(w_{1})} \sim \lambda^{1/2} \|L^{n-1/2}_{\lambda}f\|_{L_{p}(w_{1})} + \sum_{i=1}^{d} \|L^{n-1/2}_{\lambda}f_{x^{i}}\|_{L_{p}(w_{1})},
\\
\|L^{n-1/2}_{\lambda}f\|_{L_{p}(w_{1})} \leq \lambda^{-1/2}\|L^{n}_{\lambda}f\|_{L_{p}(w_{1})}.
\end{gathered}
\end{equation}
Using this and following the argument in \cite[Lemma 2.1 (ii)]{kry94Littlewood}, we have \eqref{eqn 01.07.11:13} when $m$ is an odd integer. 
The lemma is proved.
\end{proof}

The following theorem is a generaliization of \cite[Theorem 2.1]{kry94Littlewood}, which treats the case $w_{1}=1$

\begin{theorem}\label{thm 01.25.17:39}
Let $1<p<\infty$, $\gamma\in \bR$, $w_{1}\in A_{p}(\bR^{d})$, and let $\delta>0$. Suppose that $\zeta_{k}\in C^{\infty}(\bR^{d})$ is a sequence of functions satisfying
\begin{equation}
\label{eqn 01.08.15:47}
\sup_{x\in\bR^{d}}\sum_{k=1}^{\infty}|D^{\beta}\zeta_{k}(x)| \leq M(\beta) \quad \text{for any multi-index} \quad \beta.
\end{equation}
Then there exists a constant $N_{0}=N_{0}(d,p,\gamma,[w_{1}]_{p},M)$ such that for any $u\in H^{\gamma}_{p}(w_{1})$
\begin{equation}
\label{eqn 01.09.14:03}
\sum_{k=1}^{\infty}\|\zeta_{k}u\|^{p}_{H^{\gamma}_{p}(w_{1})} \leq N \|u\|^{p}_{H^{\gamma}_{p}(w_{1})}.
\end{equation}
Moreover, if $\zeta_{k}$ satisfies
\begin{equation}
\label{eqn 01.09.13:49}
\delta \leq \sum_{k=1}^{\infty}|\zeta_{k}(x)|^{p},
\end{equation}
then there exists a constant $N_{1}=N_{1}(d,p,\gamma,[w_{1}]_{p},M,\delta)$ such that for any $u\in H^{\gamma}_{p}(w_{1})$
\begin{equation}
\label{eqn 01.08.16:02}
\|u\|^{p}_{H^{\gamma}_{p}(w_{1})} \leq N \sum_{k=1}^{\infty}\|\zeta_{k}u\|^{p}_{H^{\gamma}_{p}(w_{1})}.
\end{equation}
\end{theorem}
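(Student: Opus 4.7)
The plan is to prove the result in three steps: the case $\gamma = 0$ is handled directly, integer $\gamma$ is reduced to that case using the commutator identities of the preceding lemma, and finally general real $\gamma$ is reduced to the integer case using the commutator estimate \eqref{eqn 01.07.11:13}. Throughout I use the norm equivalence $\|u\|_{H^{\gamma}_{p}(w_1)} \sim \|L^{\gamma/2}_{\lambda}u\|_{L_p(w_1)}$, which holds at any fixed $\lambda \ge 1$ since the symbol $(\lambda+|\xi|^2)^{\gamma/2}(1+|\xi|^2)^{-\gamma/2}$ and its reciprocal satisfy the Mikhlin-type hypothesis of Theorem \ref{thm 01.05.14:43}.

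\textbf{Step 1 ($\gamma = 0$).} The inequality $|\zeta_k(x)|^p \le M(0)^{p-1}|\zeta_k(x)|$ gives $\sum_k|\zeta_k(x)|^p \le M(0)^p$ pointwise, so integration of $\sum_k|\zeta_k u|^p$ against $w_1\,dx$ yields \eqref{eqn 01.09.14:03}. For \eqref{eqn 01.08.16:02}, the hypothesis \eqref{eqn 01.09.13:49} gives $|u(x)|^p \le \delta^{-1}\sum_k|\zeta_k(x)u(x)|^p$ pointwise, and integration completes the proof.

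\textbf{Step 2 (integer $\gamma$).} For $\gamma = n \ge 0$, the norm equivalence $\|u\|_{H^n_p(w_1)} \sim \sum_{|\sigma|\le n}\|D^\sigma u\|_{L_p(w_1)}$ from Remark \ref{rmk 05.08.1}(i) combined with Leibniz's rule and $\sum_k|D^\tau\zeta_k|^p \le M(\tau)^p$ yields \eqref{eqn 01.09.14:03} directly. For \eqref{eqn 01.08.16:02}, I rewrite $\zeta_k D^\sigma u = D^\sigma(\zeta_k u) - \sum_{0<\tau\le\sigma}C_{\sigma,\tau}(D^\tau\zeta_k)D^{\sigma-\tau}u$, sum over $k$ using $\sum_k|\zeta_k|^p \ge \delta$, and absorb the lower-order term $\|u\|^p_{H^{n-1}_p(w_1)}$ using the interpolation inequality \eqref{eqn 01.14.16:41} together with Step 1 applied to $u$ itself. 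For $\gamma = -n < 0$, I use formula \eqref{eqn 01.06.20:02-4}: the difference $L^{-n}_\lambda(\zeta_k u) - \zeta_k L^{-n}_\lambda u$ is a finite sum of $\lambda^{-|\sigma|/2}$-weighted terms $G_{n,\lambda,\beta}\ast\bigl((D^\sigma\zeta_k)L^{-n}_\lambda u\bigr)$ with $|\sigma|\ge 1$. Since $G_{n,\lambda,\beta}\ast$ is bounded on $L_p(w_1)$ (cf.\ \eqref{eqn 01.07.20:34}) and $\sum_k|D^\sigma\zeta_k|^p \le M(\sigma)^p$, one obtains
\begin{equation*}
\sum_k\|L^{-n}_\lambda(\zeta_k u) - \zeta_k L^{-n}_\lambda u\|^p_{L_p(w_1)} \le N\lambda^{-p/2}\|L^{-n}_\lambda u\|^p_{L_p(w_1)}.
\end{equation*}
Combined with Step 1 applied to $L^{-n}_\lambda u$, this gives \eqref{eqn 01.09.14:03} at $\lambda = 1$ and \eqref{eqn 01.08.16:02} after choosing $\lambda$ large enough to absorb the $\lambda^{-p/2}$ term.

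\textbf{Step 3 (non-integer $\gamma$).} Write $\gamma/2 = m/2 + \varepsilon$ with $m = \lfloor\gamma\rfloor \in \bZ$ and $\varepsilon = (\gamma-m)/2 \in (0,1/2)$, so $L^{\gamma/2}_\lambda = L^{m/2}_\lambda L^\varepsilon_\lambda$ and
\begin{equation*}
L^{\gamma/2}_\lambda(\zeta_k u) = L^{m/2}_\lambda(\zeta_k L^\varepsilon_\lambda u) + L^{m/2}_\lambda\bigl[L^\varepsilon_\lambda(\zeta_k u) - \zeta_k L^\varepsilon_\lambda u\bigr].
\end{equation*}
Setting $v = L^\varepsilon_\lambda u$, note that $\|v\|_{H^m_p(w_1)} \sim \|L^{m/2}_\lambda v\|_{L_p(w_1)} = \|L^{\gamma/2}_\lambda u\|_{L_p(w_1)} \sim \|u\|_{H^\gamma_p(w_1)}$. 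The first summand is then controlled by part (a) of Step 2 applied to $v \in H^m_p(w_1)$; the second summand has exactly the form estimated in \eqref{eqn 01.07.11:13}, supplying a bound with the small factor $\lambda^{p(\varepsilon-1/2)}$ times $\|u\|^p_{H^\gamma_p(w_1)}$. This gives \eqref{eqn 01.09.14:03} at $\lambda = 1$. For \eqref{eqn 01.08.16:02}, I apply part (b) of Step 2 to $v$ in $H^m_p(w_1)$ to bound $\|u\|^p_{H^\gamma_p(w_1)}$ by $\sum_k\|\zeta_k v\|^p_{H^m_p(w_1)}$, decompose $\zeta_k v = L^\varepsilon_\lambda(\zeta_k u) - [L^\varepsilon_\lambda(\zeta_k u) - \zeta_k L^\varepsilon_\lambda u]$, and absorb the commutator term by choosing $\lambda$ sufficiently large. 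The principal technical subtlety is that the equivalence constants in $\|\cdot\|_{H^\gamma_p(w_1)} \sim \|L^{\gamma/2}_\lambda\cdot\|_{L_p(w_1)}$ depend on $\lambda$ while the commutator smallness requires $\lambda$ large; this is resolved by fixing $\lambda = 1$ for the upper bound (no absorption needed) and taking $\lambda = \lambda_0(\delta, M, d, p, \gamma, [w_1]_p)$ large for the lower bound.
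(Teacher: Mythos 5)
Your decomposition into $\gamma=0$, integer $\gamma$, and non-integer $\gamma$ closely mirrors the structure in \cite{kry94Littlewood} that the paper cites and follows; the base commutator identities (\ref{eqn 01.06.20:02})--(\ref{eqn 01.06.20:02-4}) and the estimate (\ref{eqn 01.07.11:13}) are exactly the correct tools. The scaling handling (fixing $\lambda=\lambda_0(d,p,\gamma,[w_1]_p,M,\delta)$ at the end rather than rescaling via (\ref{eqn 01.08.10:50}) into $L_p(w_1^\lambda)$ with $[w_1^\lambda]_p=[w_1]_p$ as the paper does) is a legitimate alternative, since the kernel bounds on $G_{m,\lambda,\beta}$ are uniform for $\lambda\ge 1$ so (\ref{eqn 01.07.20:34}) has a $\lambda$-independent constant.

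However, there is a genuine gap in Step 2 that propagates to Step 3. For ``$\gamma=-n<0$'' you invoke (\ref{eqn 01.06.20:02-4}) with $L^{-n}_\lambda$, but $L^{-n}_\lambda=(\lambda-\Delta)^{-n}$ corresponds to $H^{-2n}_p(w_1)$, not $H^{-n}_p(w_1)$: the relevant operator is $L^{\gamma/2}_\lambda=L^{-n/2}_\lambda$. When $n$ is odd this is a genuine half-integer power of $L_\lambda$, and (\ref{eqn 01.06.20:02-4}) is only an identity for integer powers, so it does not directly apply. The paper (via Krylov) resolves this using the first-order norm equivalence (\ref{eqn 01.08.17:38}), which reduces $\|L^{n-1/2}_\lambda\cdot\|_{L_p(w_1)}$ to $\|L^n_\lambda\cdot\|_{L_p(w_1)}$ plus first derivatives; your proposal never uses (\ref{eqn 01.08.17:38}). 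Consequently odd negative integers are not covered by Step 2 as written. Step 3 then sets $m=\lfloor\gamma\rfloor$ and appeals to Step 2 for $H^m_p(w_1)$, so whenever $\lfloor\gamma\rfloor$ is an odd negative integer the reduction is resting on an unproved case. (Choosing $m$ to be the nearest even integer instead would force $\varepsilon\ge 1/2$, outside the range of (\ref{eqn 01.07.11:13}), so you would have to iterate the commutator estimate -- which is precisely what the paper's cited source does, and what must be made explicit to close the argument.)
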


\begin{proof}
Due to \eqref{eqn 01.08.10:50} and the definition of $H^{\gamma}_{p}(w_{1})$ it is enough to show that there is  $\lambda> 0$ depending on $d,p,\gamma,[w_{1}]_{p},M,\delta$ such that
\begin{gather}
\label{eqn 01.09.14:08}
 \sum_{k=1}^{\infty}\|L^{\gamma/2}_{\lambda}[\zeta_{k}u]\|^{p}_{L_{p}(w^{\lambda}_{1})} \leq \tilde{N}_{0} \|L^{\gamma/2}_{\lambda}u\|^{p}_{L_{p}(w^{\lambda}_{1})},
\\
\label{eqn 01.08.16:07}
\|L^{\gamma/2}_{\lambda}u\|^{p}_{L_{p}(w^{\lambda}_{1})} \leq \tilde{N}_{1} \sum_{k=1}^{\infty}\|L^{\gamma/2}_{\lambda}[\zeta_{k}u]\|^{p}_{L_{p}(w^{\lambda}_{1})},
\end{gather}
where $w^{\lambda}_{1}(x)=w_{1}(\lambda^{-1/2}x)$, and $\tilde{N}_{0}$, and $\tilde{N}_{1}$ are constants depending only on $d,p,\gamma,[w_{1}]_{p},M$. Moreover, since $\Ccinf(\bR^{d})$ is dense in $H^{\gamma}_{p}(w^{\lambda}_{1})$, we may assume that $u\in \Ccinf(\bR^{d})$. Finally, if we prove the Theorem for $\gamma/2\in \bZ$, then by directly following the proof of \cite[Theorem 2.1]{kry94Littlewood}, we get the desired result for all $\gamma\in \bR$. Hence, we only consider the case $\gamma/2\in \bZ$.

Let $\gamma/2=n\in \bZ$ and let $|n|=m$. If $n\leq 0$, then by \eqref{eqn 01.07.20:34} it follows that
\begin{equation}\label{eqn 01.08.16:57}
\begin{aligned}
&\sum_{k=1}^{\infty}\left\| \sum_{0<|\sigma|, |\sigma|+|\beta|\leq 2m}\lambda^{-|\sigma|/2}G_{m,\lambda,\beta}\ast[D^{\sigma}\zeta_{k}L^{-m}_{\lambda}u] \right\|^{p}_{L_{p}(w^{\lambda}_{1})}
\\
&\leq N \lambda^{-p/2}\sum_{0<|\sigma|,|\sigma|+|\beta|\leq 2m}\sum_{k=1}^{\infty}\| G_{m,\lambda,\beta}\ast[D^{\sigma}\zeta_{k}L^{-m}_{\lambda}u]\|^{p}_{L_{p}(w^{\lambda}_{1})}
\\
&\leq N \lambda^{-p/2}\sum_{|\sigma|\leq 2m}\sum_{k=1}^{\infty} \| D^{\sigma}\zeta_{k}L^{-m}_{\lambda}u\|^{p}_{L_{p}(w^{\lambda}_{1})} \leq N \lambda^{-p/2} \|L^{-m}_{\lambda}u\|^{p}_{L_{p}(w^{\lambda}_{1})}. 
\end{aligned}
\end{equation}
Hence, if we use \eqref{eqn 01.09.13:49} and \eqref{eqn 01.06.20:02-4}, 
\begin{equation}\label{eqn 01.08.17:10}
\begin{aligned}
\|L^{n}_{\lambda}u\|^{p}_{L_{p}(w_{1})} &\leq \delta^{-1}\sum_{k=1}^{\infty}\|\zeta_{k}L^{n}_{\lambda}u\|^{p}_{L_{p}(w^{\lambda}_{1})} 
\\
&\leq N \sum_{k=1}^{\infty} \|L^{-m}_{\lambda}[\zeta_{k}u]\|^{p}_{L_{p}(w^{\lambda}_{1})} 
\\
&\quad + \sum_{k=1}^{\infty}\left\| \sum_{0<|\sigma|, |\sigma|+|\beta|\leq 2m}  C^{m}_{\beta\sigma}\lambda^{-|\sigma|/2}G_{m,\lambda,\beta}\ast[D^{\sigma}\zeta_{k}L^{-m}_{\lambda}u] \right\|^{p}_{L_{p}(w^{\lambda}_{1})}
\\
&\leq N \|L^{n}_{\lambda}[\zeta_{k}u]\|^{p}_{L_{p}(w^{\lambda}_{1})} + N\lambda^{-p/2}\|L^{n}_{\lambda}u\|^{p}_{L_{p}(w^{\lambda}_{1})}.
\end{aligned}
\end{equation}
Here, we emphasize that the constant $N$ does not depend on $\lambda$ since it depends on $[w^{\lambda}_{1}]_{p}=[w_{1}]_{p}$ (recall Remark \ref{rmk 09.27.21:52} (i)). Therefore, if $\lambda\geq 1$ is large enough so that $N\lambda^{-p/2}\leq 1/2$, we have
\begin{equation}\label{eqn 01.08.17:12}
\|L^{n}_{\lambda}u\|^{p}_{L_{p}(w^{\lambda}_{1})} \leq \tilde{N}_{1} \sum_{k=1}^{\infty}\|L^{n}_{\lambda}[\zeta_{k}u]\|^{p}_{L_{p}(w^{\lambda}_{1})},
\end{equation}
and this gives \eqref{eqn 01.08.16:07}. Also from \eqref{eqn 01.06.20:02-4}, \eqref{eqn 01.08.15:47} and \eqref{eqn 01.08.16:57} we have
\begin{equation}\label{eqn 01.08.17:24}
\sum_{k=1}^{\infty}\|L^{n}_{\lambda}[\zeta_{k}u]\|^{p}_{L_{p}(w^{\lambda}_{1})} \leq \tilde{N}_{0} \|L^{n}_{\lambda}u\|^{p}_{L_{p}(w^{\lambda}_{1})},
\end{equation}
where the constant $\tilde{N}_{0}$ does not depend on $\lambda$ and thus we prove \eqref{eqn 01.09.14:08} when $\gamma/2$ is not a nonnegative integer.
If $\gamma/2=n\geq 0$, then by letting $n=m$, like \eqref{eqn 01.08.16:57} we have
\begin{equation*}
\begin{aligned}
\sum_{k=1}^{\infty}\left\|\sum_{0<|\sigma|, |\sigma|+|\beta|\leq 2m} \lambda^{-|\sigma|/2}(D^{\sigma}\zeta_{k})(G_{m,\lambda,\beta}\ast L^{m}_{\lambda}u) \right\|^{p}_{L_{p}(w^{\lambda}_{1})}
\leq N \lambda^{-p/2}\|L^{m}_{\lambda}u\|^{p}_{L_{p}(w^{\lambda}_{1})}.
\end{aligned}
\end{equation*}
This and \eqref{eqn 01.06.20:02} yield
\begin{equation*}
\begin{aligned}
\|L^{n}_{\lambda}u\|^{p}_{L_{p}(w^{\lambda}_{1})} &\leq \delta^{-1}\sum_{k=1}^{\infty}\|\zeta_{k}L^{n}_{\lambda}u\|^{p}_{L_{p}(w^{\lambda}_{1})} 
\\
&\leq N \sum_{k=1}^{\infty} \|L^{m}_{\lambda}[\zeta_{k}u]\|^{p}_{L_{p}(w^{\lambda}_{1})} 
\\
&\quad +\sum_{k=1}^{\infty}\left\|\sum_{0<|\sigma|, |\sigma|+|\beta|\leq 2m} C^{m}_{\beta\sigma}\lambda^{-|\sigma|/2}(D^{\sigma}\zeta_{k})(G_{m,\lambda,\beta}\ast L^{m}_{\lambda}u) \right\|^{p}_{L_{p}(w^{\lambda}_{1})}
\\
&\leq N \|L^{n}_{\lambda}[\zeta_{k}u]\|^{p}_{L_{p}(w^{\lambda}_{1})} + N\lambda^{-p/2}\|L^{n}_{\lambda}u\|^{p}_{L_{p}(w^{\lambda}_{1})}.
\end{aligned}
\end{equation*}
Hence, we have \eqref{eqn 01.08.17:12} for sufficiently large $\lambda \geq 1$, and one can similarly check \eqref{eqn 01.08.17:24} for $\gamma/2=n$ is a nonnegative integer by using \eqref{eqn 01.06.20:02} and \eqref{eqn 01.08.15:47}. The theorem is proved.
\end{proof}

\mysection{Proof of Theorem \ref{theorem 5.1}}

\begin{theorem} \label{theorem 5.1-2}
Let $0<\alpha<2$, $\gamma\in\bR$, $1<p,q<\infty$, $w_{1}=w_{1}(x)\in A_{p}(\R^{d})$, $w_{2}=w_{2}(t)\in A_{q}(\R)$, and $T<\infty$. Suppose that $a^{ij}$ are constants satisfying \eqref{eqn 07.22.2} and $f\in \bH^{\gamma}_{q,p}(w_{2},w_{1},T)$. Then the equation
\begin{equation*}
\partial^{\alpha}_{t}u=a^{ij}u_{x^{i}x^{j}} +f, \quad t>0\,; \quad u(0,\cdot)=1_{\alpha>1}\partial_{t}u(0,\cdot)=0
\end{equation*} 
has a unique solution $u$ in $\bH^{\alpha,\gamma+2}_{q,p,0}(w_{2},w_{1},T)$. Moreover, the solution $u$ satisfies
\begin{equation} \label{eqn 05.08.1-1}
\|u_{xx}\|_{\bH^{\gamma}_{q,p}(w_{2},w_{1},T)}\leq N_0\|f\|_{\bH^{\gamma}_{q,p}(w_{2},w_{1},T)},
\end{equation}
\begin{equation} \label{eqn 05.08.1}
\|u\|_{\bH_{q,p}^{\alpha,\gamma+2}(w_{2},w_{1},T)}\leq N_1\|f\|_{\bH^{\gamma}_{q,p}(w_{2},w_{1},T)},
\end{equation}
where  $N_0=N_0(\alpha,d,\gamma,\delta,p,q,[w_{1}]_{p}, [w_{2}]_{q})$, and $N_1=N_1(\alpha,d,\gamma,\delta,p,q,[w_{1}]_{p}, [w_{2}]_{q},T)$.
\end{theorem}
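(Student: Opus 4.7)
The plan is a two-step reduction. First reduce the constant-coefficient operator $a^{ij}D_{i}D_{j}$ to $\Delta$ by an affine change of variables in $x$; second, reduce arbitrary $\gamma\in\bR$ to $\gamma=0$ by conjugating with the Bessel potential $(1-\Delta)^{\gamma/2}$, which is an isometry of the weighted solution class by Lemma \ref{lem 05.08.1}(ii). The $\gamma=0$, $a^{ij}=\delta^{ij}$ weighted mixed-norm estimate is already established in \cite{han19timefractionalAp}, so the two reductions essentially finish the proof.

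For the spatial reduction, write the symmetric positive definite matrix $A=(a^{ij})$ as $\Sigma^{2}$ with $\Sigma=A^{1/2}$; \eqref{eqn 07.22.2} controls $\|\Sigma\|$ and $\|\Sigma^{-1}\|$ in terms of $\delta$. The substitution $\tilde u(t,y):=u(t,\Sigma y)$, $\tilde f(t,y):=f(t,\Sigma y)$ turns the equation into $\partial^{\alpha}_{t}\tilde u=\Delta\tilde u+\tilde f$, and by Remark \ref{rmk 09.27.21:52}(i) the pulled-back weight $\tilde w_{1}(y):=w_{1}(\Sigma y)$ lies in $A_{p}(\bR^{d})$ with $[\tilde w_{1}]_{p}\sim [w_{1}]_{p}$, the implicit constant depending only on $\delta$. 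Hence matters reduce to the Laplacian case with a possibly different but comparable $A_{p}$-weight, and all constants transfer back with at most a $\delta$-dependent factor.

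For the smoothness reduction, given $f\in\bH^{\gamma}_{q,p}(w_{2},w_{1},T)$ set $g:=(1-\Delta)^{\gamma/2}f\in\bL_{q,p}(w_{2},w_{1},T)$ and invoke the weighted $\gamma=0$ result from \cite{han19timefractionalAp} to obtain a unique $v\in\bH^{\alpha,2}_{q,p,0}(w_{2},w_{1},T)$ solving $\partial^{\alpha}_{t}v=\Delta v+g$ with $\|v_{xx}\|_{\bL_{q,p}}\leq N\|g\|_{\bL_{q,p}}$. Put $u:=(1-\Delta)^{-\gamma/2}v$; by Lemma \ref{lem 05.08.1}(ii) one has $u\in\bH^{\alpha,\gamma+2}_{q,p,0}$ and $\partial^{\alpha}_{t}u=(1-\Delta)^{-\gamma/2}\partial^{\alpha}_{t}v$, and since $\Delta$ commutes with Bessel potentials, $u$ solves $\partial^{\alpha}_{t}u=\Delta u+f$. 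The identity $\|u_{xx}\|_{\bH^{\gamma}_{q,p}}=\|(1-\Delta)^{\gamma/2}u_{xx}\|_{\bL_{q,p}}=\|v_{xx}\|_{\bL_{q,p}}$ then yields \eqref{eqn 05.08.1-1}.

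To promote \eqref{eqn 05.08.1-1} to the full estimate \eqref{eqn 05.08.1}, combine three ingredients: (a) the norm equivalence $\|u\|_{H^{\gamma+2}_{p}(w_{1})}\sim\|u\|_{H^{\gamma}_{p}(w_{1})}+\sum_{|\beta|=2}\|D^{\beta}u\|_{H^{\gamma}_{p}(w_{1})}$, a consequence of Theorem \ref{thm 01.05.14:43} applied to the symbols $D^{\beta}(1-\Delta)^{-1}$ together with Remark \ref{rmk 05.08.1}(i); (b) the Hardy-type bound $\|u\|_{\bH^{\gamma}_{q,p}(w_{2},w_{1},T)}\leq T^{\alpha}N(\|u_{xx}\|_{\bH^{\gamma}_{q,p}}+\|f\|_{\bH^{\gamma}_{q,p}})$ produced by applying Lemma \ref{lem 05.08.1}(iv) to $\partial^{\alpha}_{t}u=\Delta u+f$; and (c) the trivial $\|\partial^{\alpha}_{t}u\|_{\bH^{\gamma}_{q,p}}\leq\|u_{xx}\|_{\bH^{\gamma}_{q,p}}+\|f\|_{\bH^{\gamma}_{q,p}}$ read off the equation. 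Uniqueness follows by applying \eqref{eqn 05.08.1-1} to a difference of two $\bH^{\alpha,\gamma+2}_{q,p,0}$-solutions to force $u_{xx}\equiv 0$, and then using Lemma \ref{lem 05.08.1}(iv) on $\partial^{\alpha}_{t}u=\Delta u=0$ to conclude $u\equiv 0$. There is no serious analytic obstacle in this scheme, since the hard part (the $\gamma=0$ case with two Muckenhoupt weights) is already done; the only point requiring care is that the Bessel-potential conjugation stays inside the subspace with zero initial trace, which is precisely what Lemma \ref{lem 05.08.1}(ii) guarantees.
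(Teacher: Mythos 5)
Your proposal is correct and follows essentially the same route as the paper's proof: reduce to $\gamma=0$ via the Bessel-potential isometry of Lemma \ref{lem 05.08.1}(ii), reduce constant $a^{ij}$ to $\delta^{ij}$ by the affine substitution $x\mapsto\Sigma y$ together with the weight-invariance in Remark \ref{rmk 09.27.21:52}(i), and invoke the $\gamma=0$, Laplacian result of \cite{han19timefractionalAp}. The only cosmetic difference is that you perform the two reductions in the opposite order and spell out the promotion of the $u_{xx}$-bound to the full $\bH^{\alpha,\gamma+2}_{q,p,0}$-estimate via Lemma \ref{lem 05.08.1}(iv), whereas the paper simply remarks that \cite[Theorem 2.8]{han19timefractionalAp} already contains all claims in the base case.
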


\begin{proof}
Due to the definition of $\mathbb{H}^{\alpha,\gamma+2}_{q,p,0}(w_{2},w_{1},T)$ and Lemma \ref{lem 05.08.1} (ii) we only consider the case $\gamma=0$. In this case, by \cite[Theorem 2.8]{han19timefractionalAp}, all the claims of theorem are proved when $a^{ij}=\delta^{ij}$. Also by applying Remark \ref{rmk 09.27.21:52} (i), we can prove the theorem for general case (see also \cite[Remark 2.9]{han19timefractionalAp}). 
\end{proof}

\begin{lemma}\label{lem 01.12.12:38}
Let $1<p,q<\infty$, $\gamma\in\bR$, $w_{1}\in A_{p}(\bR^{d})$, $w_{2}\in A_{q}(\bR)$, and let $0<T<\tilde{T}$.

(i) For any $u\in \mathbb{H}^{\alpha,\gamma+2}_{q,p,0}(w_{2},w_{1},T)$, there exists $\tilde{u}\in \mathbb{H}^{\alpha,\gamma+2}_{q,p,0}(w_{2},w_{1},\tilde{T})$ such that $u(t)=\tilde{u}(t)$ for all $t\leq T$, and
\begin{equation}\label{eqn 01.12.13:07}
\|\tilde{u}\|_{\mathbb{H}^{\alpha,\gamma+2}_{q,p,0}(w_{2},w_{1},\tilde{T})} \leq N \|u\|_{\mathbb{H}^{\alpha,\gamma+2}_{q,p,0}(w_{2},w_{1},T)},
\end{equation} 
where the constant $N$ is independent of $T$.

(ii) Let $u,v\in \mathbb{H}^{\alpha,\gamma+2}_{q,p,0}(w_{2},w_{1},\tilde{T})$ and $u(t)=v(t)$ for $t\leq T$. Then $\bar{u}(t):=u(T+t)-v(T+t)\in \mathbb{H}^{\alpha,\gamma+2}_{q,p,0}(w_{2,T},w_{1},\tilde{T}-T)$, where $w_{2,T}(t)=w_{2}(T+t)$.
\end{lemma}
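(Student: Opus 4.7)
The plan is to reduce (i) to the constant-coefficient solvability of Theorem \ref{theorem 5.1-2} and (ii) to the common-defining-sequence construction of Remark \ref{rmk 01.14.13:13}. For (i), set $f := \partial_t^{\alpha} u - \Delta u$, which lies in $\mathbb{H}^{\gamma}_{q,p}(w_{2}, w_{1}, T)$ by Remark \ref{rmk 05.08.1}(i) and Lemma \ref{lem 05.08.1}(ii), and extend by zero to $\tilde f \in \mathbb{H}^{\gamma}_{q,p}(w_{2}, w_{1}, \tilde T)$ with $\|\tilde f\|_{\mathbb{H}^{\gamma}_{q,p}(w_{2},w_{1},\tilde T)} = \|f\|_{\mathbb{H}^{\gamma}_{q,p}(w_{2},w_{1},T)}$. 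Apply Theorem \ref{theorem 5.1-2} with $a^{ij}=\delta^{ij}$ on $(0, \tilde T)$ to produce a unique $\tilde u \in \mathbb{H}^{\alpha,\gamma+2}_{q,p,0}(w_{2}, w_{1}, \tilde T)$ satisfying $\partial_t^{\alpha}\tilde u = \Delta \tilde u + \tilde f$ with $\|\tilde u\|_{\mathbb{H}^{\alpha,\gamma+2}_{q,p}(w_{2},w_{1},\tilde T)} \leq N_{1}(\tilde T) \|\tilde f\|_{\mathbb{H}^{\gamma}_{q,p}(w_{2},w_{1},\tilde T)}$. Restricting a defining sequence of $\tilde u$ to $[0,T]$ places $\tilde u|_{(0,T)}$ in $\mathbb{H}^{\alpha,\gamma+2}_{q,p,0}(w_{2}, w_{1}, T)$; since it solves the same equation with the same forcing and initial data as $u$, the uniqueness clause in Theorem \ref{theorem 5.1-2} forces $\tilde u|_{(0,T)} = u$. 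Chaining the norm identity for $\tilde f$ with $\|f\|_{\mathbb{H}^{\gamma}_{q,p}(T)} \leq N \|u\|_{\mathbb{H}^{\alpha,\gamma+2}_{q,p}(T)}$ then delivers \eqref{eqn 01.12.13:07} with a constant depending on $\tilde T$ but not on $T$.

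For (ii), let $w := u - v \in \mathbb{H}^{\alpha,\gamma+2}_{q,p,0}(w_{2}, w_{1}, \tilde T)$, which vanishes on $(0,T)$ by hypothesis. Use Remark \ref{rmk 01.14.13:13} with $T_{0} = T$ to pick defining sequences $u_n, v_n \in \Ccinf((0,\infty) \times \mathbb{R}^{d})$ such that $w_n := u_n - v_n$ vanishes identically for $t < T + a_n$ with $a_n \downarrow 0$. Set $\bar w_n(t, x) := w_n(T + t, x)$; this is smooth, compactly supported in $(0,\infty) \times \mathbb{R}^{d}$, vanishes on $[0, a_n)$, and hence meets the initial conditions $\bar w_n(0,\cdot) = 1_{\alpha>1} \partial_t \bar w_n(0,\cdot) = 0$. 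The key step is the pointwise shift identity
\begin{equation*}
\partial_t^{\alpha}\bar w_n(t, x) = (\partial_t^{\alpha} w_n)(T+t, x),
\end{equation*}
which follows by writing $\partial_t^{\alpha} = (d/dt)^{n} I_t^{n-\alpha}$ (with $n-1 \leq \alpha < n$), changing variables $s' = T + s$ in the Riemann-Liouville integral, and invoking the pointwise vanishing of $w_n$ on $(0, T + a_n)$ to extend the lower limit of integration down to $0$.

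With the shift identity in hand, the change of variables $s = T + t$ and Remark \ref{rmk 09.27.21:52}(i) (which gives $w_{2,T} \in A_q(\mathbb{R})$ with $[w_{2,T}]_q = [w_2]_q$) yield $\|\bar w_n\|_{\mathbb{H}^{\alpha,\gamma+2}_{q,p}(w_{2,T}, w_{1}, \tilde T - T)} \leq \|w_n\|_{\mathbb{H}^{\alpha,\gamma+2}_{q,p}(w_{2}, w_{1}, \tilde T)}$. Since $\{w_n\}$ is Cauchy in $\mathbb{H}^{\alpha,\gamma+2}_{q,p,0}(w_{2}, w_{1}, \tilde T)$, so is $\{\bar w_n\}$; a direct check of the integrals identifies its limit with $\bar u = u(T+\cdot) - v(T+\cdot)$, placing $\bar u$ in $\mathbb{H}^{\alpha,\gamma+2}_{q,p,0}(w_{2,T}, w_{1}, \tilde T - T)$. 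The main obstacle is the nonlocality of $\partial_t^{\alpha}$ in (ii): knowing $u = v$ on $(0, T)$ at the level of $H^{\gamma+2}_p(w_1)$-valued maps is, by itself, insufficient to commute the fractional derivative with the $T$-shift. The role of Remark \ref{rmk 01.14.13:13} is precisely to approximate by smooth functions vanishing pointwise on an open neighborhood of $[0, T]$, which turns the shift identity into a direct computation rather than a distributional manipulation.
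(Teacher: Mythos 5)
Your proof follows essentially the same route as the paper: part (i) solves the auxiliary constant-coefficient problem $\partial_t^\alpha\tilde u=\Delta\tilde u+f1_{t\le T}$ on $(0,\tilde T)$ via Theorem \ref{theorem 5.1-2} and invokes uniqueness on $(0,T)$, and part (ii) uses the defining sequences from Remark \ref{rmk 01.14.13:13} that coincide on $t<T+a_n$, shifts them by $T$, and passes to the limit. Your explicit verification of the shift identity $\partial_t^\alpha\bar w_n(t,x)=(\partial_t^\alpha w_n)(T+t,x)$ for functions vanishing on a neighborhood of $[0,T]$ is a correct and helpful elaboration of a step the paper's proof leaves implicit.
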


\begin{proof}
By Lemma \ref{lem 05.08.1} (ii), we may assume that $\gamma=0$.

(i) By Theorem \ref{theorem 5.1-2}, the equation
$$
\partial^{\alpha}_{t}\tilde{u}=\Delta \tilde{u}+(\partial^{\alpha}_{t}u-\Delta u)1_{t\leq T} ,\quad t\leq \tilde{T}
$$
has a unique solution $\tilde{u}\in \mathbb{H}^{\alpha,2}_{q,p,0}(w_{2},w_{1},\tilde{T})$, and \eqref{eqn 01.12.13:07} also follows. Moreover, since $\bar{u}=u-\tilde{u}$ satisfies 
$$
\partial^{\alpha}_{t}\bar{u}=\Delta \bar{u} \quad t\leq T,
$$
we have $u=\tilde{u}$ for $t\leq T$ due to the uniqueness of solution.

(ii) Take sequences $u_{n},v_{n}\in \Ccinf((0,\infty)\times\bR^{d})$ such that $u_{n}$ and $v_{n}$ converges to $u$ and $v$ in $\mathbb{H}^{\alpha,2}_{q,p,0}(w_{2},w_{1},\tilde{T})$ respectively. Note that by Remark \ref{rmk 01.14.13:13}, one can take $u_{n},v_{n}$ so that
$$
u_{n}(t,x)=v_{n}(t,x)\quad \forall t\leq (T+a_{n})\wedge \tilde{T}.
$$
Therefore, if we define
$$
\bar{u}_{n}=u_{n}(T+t,x)-v_{n}(T+t,x),
$$
we have $\bar{u}_{n}\in \Ccinf((0,\infty)\times\bR^{d})$, and 
$$
\bar{u}_{n}(0,x)=0,\quad 1_{\alpha>1}\partial_{t}\bar{u}_{n}(0,x)=0 \quad \forall x\in \bR^{d}.
$$
Moreover, since $\bar{u}_{n}$ converges to $\bar{u}$ in $\mathbb{H}^{\alpha,\gamma+2}_{q,p}(w_{2,T},w_{1},\tilde{T}-T)$, we have the desired result. The lemma is proved.
\end{proof}

\begin{lemma}\label{lem 01.12.17:45}
Let $0<\alpha<2$, $1<p,q<\infty$, $\gamma\in\bR$, $w_{1}=w_{1}(x)\in A_{p}(\R^{d})$, $w_{2}=w_{2}(t)\in A_{q}(\R)$, and $T<\infty$. Suppose that Assumption \ref{asm 07.16.1} holds and $f\in \bH^{\gamma}_{q,p}(w_{2},w_{1},T)$. Also suppose that $u\in\mathbb{H}^{\alpha,\gamma+2}_{q,p,0}(w_{2},w_{1},T)$ is a solution for \eqref{eqn 05.09.1} satisfying
\begin{equation}\label{eqn 01.12.17:47}
\|u\|_{\mathbb{H}^{\gamma+2}_{q,p}(w_{2},w_{1},t)}\leq N_{0} \left( \|f\|_{\mathbb{H}^{\gamma}_{q,p}(w_{2},w_{1},t)} + \|u\|_{\mathbb{H}^{\gamma+1}_{q,p}(w_{2},w_{1},t)}  \right) \quad \forall\,t\leq T_{0},
\end{equation}
where $T_{0}\leq T$, and the constant $N_{0}$ is independent of $u,f$, and $t$. Then there exists $t_{0}<T_{0}$ depending only on $\alpha,d,p,q,\gamma,\delta,[w_{1}]_{p},[w_{2}]_{q}$, and $N_{0}$ such that
\begin{equation}\label{eqn 01.12.17:47-2}
\|u\|_{\mathbb{H}^{\gamma+2}_{q,p}(w_{2},w_{1},t)}\leq N \|f\|_{\mathbb{H}^{\gamma}_{q,p}(w_{2},w_{1},t)} \quad \forall\, t<t_{0},
\end{equation}
where the constant $N$ depends only on $\alpha,d,p,q,\gamma,\delta,[w_{1}]_{p},[w_{2}]_{q}$, and $N_{0}$.
\end{lemma}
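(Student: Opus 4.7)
The plan is to use the interpolation inequality from Remark \ref{rmk 01.14.15:40} to absorb the intermediate norm $\|u\|_{\mathbb{H}^{\gamma+1}_{q,p}}$ appearing on the right of \eqref{eqn 01.12.17:47}, and then use Lemma \ref{lem 05.08.1}(iv) together with the pointwise multiplier bound (Lemma \ref{lem 01.06.16:05}) to handle the residual lower-order term via the small factor $t^{\alpha}$.

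First I would apply \eqref{eqn 01.14.16:41} pointwise in $t$ with $p_0=p_1=p$, $w^0_1=w^1_1=w_1$, $\gamma_0=\gamma+2$, $\gamma_1=\gamma$, and $\theta=1/2$ to get, for every $\varepsilon>0$,
\begin{equation*}
\|u(t,\cdot)\|_{H^{\gamma+1}_{p}(w_{1})}\leq \varepsilon \|u(t,\cdot)\|_{H^{\gamma+2}_{p}(w_{1})} + N(\varepsilon)\|u(t,\cdot)\|_{H^{\gamma}_{p}(w_{1})}.
\end{equation*}
Raising to the $q$-th power and integrating against $w_2(t)dt$ on $(0,t)$ yields
\begin{equation*}
\|u\|_{\mathbb{H}^{\gamma+1}_{q,p}(w_{2},w_{1},t)}\leq \varepsilon \|u\|_{\mathbb{H}^{\gamma+2}_{q,p}(w_{2},w_{1},t)} + N(\varepsilon)\|u\|_{\mathbb{H}^{\gamma}_{q,p}(w_{2},w_{1},t)}.
\end{equation*}
Plugging this into \eqref{eqn 01.12.17:47} and choosing $\varepsilon=1/(4N_0)$ allows me to absorb the $\mathbb{H}^{\gamma+2}_{q,p}$ contribution back into the left-hand side, giving
\begin{equation*}
\|u\|_{\mathbb{H}^{\gamma+2}_{q,p}(w_{2},w_{1},t)}\leq N\|f\|_{\mathbb{H}^{\gamma}_{q,p}(w_{2},w_{1},t)} + N\|u\|_{\mathbb{H}^{\gamma}_{q,p}(w_{2},w_{1},t)}, \qquad t\leq T_0,
\end{equation*}
where $N$ depends only on $N_0$ and the interpolation constant.

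Next I would control $\|u\|_{\mathbb{H}^{\gamma}_{q,p}(w_{2},w_{1},t)}$ using Lemma \ref{lem 05.08.1}(iv), which gives
\begin{equation*}
\|u\|_{\mathbb{H}^{\gamma}_{q,p}(w_{2},w_{1},t)}\leq N t^{\alpha}\|\partial^{\alpha}_{t}u\|_{\mathbb{H}^{\gamma}_{q,p}(w_{2},w_{1},t)}.
\end{equation*}
Since $u$ solves \eqref{eqn 05.09.1} and the coefficients $a^{ij},b^i,c$ lie in $B^{|\gamma|}$ by Assumption \ref{asm 07.16.1}(iv), the pointwise multiplier estimate \eqref{eqn 12.30.15:27} yields
\begin{equation*}
\|\partial^{\alpha}_{t}u\|_{\mathbb{H}^{\gamma}_{q,p}(w_{2},w_{1},t)} \leq N\bigl(\|u\|_{\mathbb{H}^{\gamma+2}_{q,p}(w_{2},w_{1},t)} + \|f\|_{\mathbb{H}^{\gamma}_{q,p}(w_{2},w_{1},t)}\bigr),
\end{equation*}
so that
\begin{equation*}
\|u\|_{\mathbb{H}^{\gamma}_{q,p}(w_{2},w_{1},t)}\leq N t^{\alpha}\bigl(\|u\|_{\mathbb{H}^{\gamma+2}_{q,p}(w_{2},w_{1},t)} + \|f\|_{\mathbb{H}^{\gamma}_{q,p}(w_{2},w_{1},t)}\bigr).
\end{equation*}

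Finally, I would combine the two displayed estimates: substituting the second into the first produces a bound of the form
\begin{equation*}
\|u\|_{\mathbb{H}^{\gamma+2}_{q,p}(w_{2},w_{1},t)} \leq N\|f\|_{\mathbb{H}^{\gamma}_{q,p}(w_{2},w_{1},t)} + NN(\varepsilon) t^{\alpha}\|u\|_{\mathbb{H}^{\gamma+2}_{q,p}(w_{2},w_{1},t)},
\end{equation*}
and picking $t_0\in(0,T_0)$ small enough so that $NN(\varepsilon)t_0^{\alpha}\leq 1/2$ lets me absorb the last term for all $t<t_0$, giving \eqref{eqn 01.12.17:47-2}. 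The choice of $t_0$ depends only on the parameters listed in the statement because all constants ($N_0$, the interpolation constant from Remark \ref{rmk 01.14.15:40}, and the multiplier constant from Lemma \ref{lem 01.06.16:05}) do. There is no serious obstacle here; the only technical care needed is to verify that the interpolation inequality at the pointwise-in-$t$ level lifts cleanly to the mixed-norm space, which is immediate because the weighted interpolation constant depends only on $d,p,\gamma$, and $[w_1]_p$, not on $t$.
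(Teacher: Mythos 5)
Your proposal is correct and follows essentially the same route as the paper's proof: first use the interpolation inequality \eqref{eqn 01.14.16:41} to absorb the $\mathbb{H}^{\gamma+1}_{q,p}$ term coming from \eqref{eqn 01.12.17:47}, then use Lemma \ref{lem 05.08.1}(iv) together with the pointwise multiplier estimate \eqref{eqn 12.30.15:27} to bound the residual $\mathbb{H}^{\gamma}_{q,p}$ term by $N t^{\alpha}(\|u\|_{\mathbb{H}^{\gamma+2}_{q,p}}+\|f\|_{\mathbb{H}^{\gamma}_{q,p}})$, and finally choose $t_0$ small to absorb. The only difference is presentational—you spell out the interpolation pointwise in $t$ before integrating, whereas the paper applies the mixed-norm form directly—but the argument and the dependence of $t_0$ on the parameters are identical.
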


\begin{proof}
From \eqref{eqn 01.12.17:47} and \eqref{eqn 01.14.16:41} for any $t\leq T$ and $\varepsilon>0$ we have
\begin{equation*}
\begin{aligned}
\|u\|_{\mathbb{H}^{\gamma+2}_{q,p}(w_{2},w_{1},t)} &\leq N_{0} \|f\|_{\mathbb{H}^{\gamma}_{q,p}(w_{2},w_{1},t)} + \varepsilon N_{0}\|u\|_{\mathbb{H}^{\gamma+2}_{q,p}(w_{2},w_{1},t)} 
\\
&\quad + N(N_{0},\varepsilon)\|u\|_{\mathbb{H}^{\gamma}_{q,p}(w_{2},w_{1},t)}.
\end{aligned}
\end{equation*}
Hence, if we take $\varepsilon>0$ small enough such that $\varepsilon N_{0} <1/2$, we have
$$
\|u\|_{\mathbb{H}^{\gamma+2}_{q,p}(w_{2},w_{1},t)} \leq 2N_{0} \|f\|_{\mathbb{H}^{\gamma}_{q,p}(w_{2},w_{1},t)} + N(N_{0})\|u\|_{\mathbb{H}^{\gamma}_{q,p}(w_{2},w_{1},t)}.
$$
for $t\leq T_{0}$. Since $u$ satisfies \eqref{eqn 05.09.1}, by \eqref{eqn 12.21.16:05} and Lemma \ref{lem 01.06.16:05} we have
\begin{equation*}
\begin{aligned}
\|u\|_{\mathbb{H}^{\gamma+2}_{q,p}(w_{2},w_{1},t)} &\leq 2N_{0} \|f\|_{\mathbb{H}^{\gamma}_{q,p}(w_{2},w_{1},t)} + N(N_{0})\|u\|_{\mathbb{H}^{\gamma}_{q,p}(w_{2},w_{1},t)}.
\\
& \leq  N_{1}t^{\alpha}(\|u\|_{\mathbb{H}^{\gamma+2}_{q,p}(w_{2},w_{1},t)} + \|f\|_{\mathbb{H}^{\gamma}_{q,p}(w_{2},w_{1},t)})
\\
& \quad + 2N_{0}\|f\|_{\mathbb{H}^{\gamma}_{q,p}(w_{2},w_{1},t)},
\end{aligned}
\end{equation*}
where $N_{1}=N_{1}(\alpha,d,p,q,\gamma,\delta,[w_{1}]_{p},[w_{2}]_{q},N_{0})$. Therefore, if we take $t_{0}$ such that $N_{1}(t_{0})^{\alpha}<1/4$, we have
$$
\|u\|_{\mathbb{H}^{\gamma+2}_{q,p}(w_{2},w_{1},t)}\leq N \|f\|_{\mathbb{H}^{\gamma}_{q,p}(w_{2},w_{1},t)}
$$
for all $t<t_{0}$, where $N=N(\alpha,d,p,q,\gamma,\delta,[w_{1}]_{p},[w_{2}]_{q},N_{0})$. The lemma is proved.
\end{proof}

\textbf{Proof of Theorem \ref{theorem 5.1}}
\\
Due to Theorem \ref{theorem 5.1-2} we only prove the a priori estimate. Moreover, by the definition of the norm $\|u\|_{\mathbb{H}^{\alpha,\gamma+2}_{q,p}(w_{2},w_{1},T)}$ we only prove the a priori estimate with $\|u\|_{\mathbb{H}^{\gamma+2}_{q,p}(w_{2},w_{1},T)}$ in place of $\|u\|_{\mathbb{H}^{\alpha,\gamma+2}_{q,p}(w_{2},w_{1},T)}$.

\textit{Step 1.}  $p=q$,  $a^{ij}=a^{ij}(t)$ are independent of $x$ and $b^{i}=c=0$.
\\
Suppose that $u\in\mathbb{H}^{\alpha,\gamma+2}_{p,p,0}(w_{2},w_{1},T)$ satisfies \eqref{eqn 05.09.1}. If we let $a^{ij}_{0}=a^{ij}(0)$, then $u$ satisfies
$$
\partial^{\alpha}_{t}u=a^{ij}_{0}u_{x^{i}x^{j}}+f+(a^{ij}-a^{ij}_{0})u_{x^{i}x^{j}}.
$$
Hence, for each $t\leq T$, we have
\begin{equation*}
\begin{aligned}
\|u\|_{\mathbb{H}^{\gamma+2}_{p,p}(w_{2},w_{1},t)} &\leq N \left( \|f\|_{\mathbb{H}^{\gamma}_{p,p}(w_{2},w_{1},t)} + \|(a^{ij}-a^{ij}_{0})D^{2}u\|_{\mathbb{H}^{\gamma}_{p,p}(w_{2},w_{1},t)} \right)
\\
& = N\left( \|f\|_{\mathbb{H}^{\gamma}_{p,p}(w_{2},w_{1},t)}+ \|(a^{ij}-a^{ij}_{0})(1-\Delta)^{\gamma/2}D^{2}u\|_{\mathbb{L}_{p,p}(w_{2},w_{1},t)} \right)
\\
&\leq N\left( \|f\|_{\mathbb{H}^{\gamma}_{p,p}(w_{2},w_{1},t)}+\sup_{0<s<t} |a^{ij}(s)-a^{ij}_{0}| \|u\|_{\mathbb{H}^{\gamma+2}_{p,p}(w_{2},w_{1},t)} \right),
\end{aligned}
\end{equation*}
where the constant $N$ is independent of $t$. Due to the uniform continuity of $a^{ij}$ there exists a $T_{0}$ such that if $t<T_{0}$, then the $N\sup_{0<s<t} |a^{ij}(s)-a^{ij}_{0}|<1/2$. Therefore, by Lemma \ref{lem 01.12.17:45} there is $t_{0}\leq T_{0}$ such that
\begin{equation}\label{eqn 01.13.15:34}
\|u\|_{\mathbb{H}^{\gamma+2}_{p,p}(w_{2},w_{1},t)} \leq N \|f\|_{\mathbb{H}^{\gamma}_{p,p}(w_{2},w_{1},t)}
\end{equation}
for all $t<t_{0}$. Take an integer $l$ such that $T/l<\frac{1}{2}t_{0}$ and let $T_{k}=kT/l$. Suppose that for $k\leq l$ we have \eqref{eqn 01.13.15:34} with $T_{k}$ in place of $t$. Take $\tilde{u}$ from Lemma \ref{lem 01.12.12:38} (i) corresponding to $T=T_{k}$ and $\tilde{T}=T_{k+1}$. Set $\bar{u}(t)=(u-\tilde{u})(T_{k}+t)$. Then $\bar{u}$ satisfies
$$
\partial^{\alpha}_{t}\bar{u}=a^{ij}\bar{u}_{x^{i}x^{j}}+\bar{f} \quad t\leq T_{k+1}-T_{k}
$$
where
$$
\bar{f}(t,x)=f(T_{k}+t,x)-\partial^{\alpha}_{t}\tilde{u}(T_{k}+t,x)+a^{ij}\tilde{u}(T_{k}+t,x).
$$
Therefore, by the above result and Lemma \ref{lem 01.12.17:45}, we have
\begin{equation*}
\begin{aligned}
\|\bar{u}\|_{\mathbb{H}^{\gamma+2}_{p,p}(w_{2,T_{k}},w_{1},T_{1})} &\leq N \|\bar{f}\|_{\mathbb{H}^{\gamma}_{p,p}(w_{2,T_{k}},w_{1},T_{1})}
\\
& \leq N \left( \|f\|_{\mathbb{H}^{\gamma}_{p,p}(w_{2},w_{1},T_{k+1})} + \|\tilde{u}\|_{\mathbb{H}^{\gamma+2}_{p,p}(w_{2},w_{1},T_{k+1})} \right)
\\
&\leq N \left( \|f\|_{\mathbb{H}^{\gamma}_{p,p}(w_{2},w_{1},T_{k+1})}+ \|u\|_{\mathbb{H}^{\gamma+2}_{p,p}(w_{2},w_{1},T_{k})} \right)
\\
&\leq N \|f\|_{\mathbb{H}^{\gamma}_{p,p}(w_{2},w_{1},T_{k+1})},
\end{aligned}
\end{equation*}
where $w_{2,T_{k}}(t)=w_{2}(T_{k}+t)$ and the last inequality holds due to the assumption that the a priori estimate holds for $T_{k}$. Hence, by \eqref{eqn 01.12.13:07} we have
\begin{equation*}
\begin{aligned}
\|u\|_{\mathbb{H}^{\gamma+2}_{p,p}(w_{2},w_{1},T_{k+1})} &\leq \|u\|_{\mathbb{H}^{\gamma+2}_{p,p}(w_{2},w_{1},T_{k})} + \|u(T_{k}+\cdot)\|_{\mathbb{H}^{\gamma+2}_{p,p}(w_{2,T_{k}},w_{1},T_{k+1}-T_{k})}
\\
&\leq N \|f\|_{\mathbb{H}^{\gamma}_{p,p}(w_{2},w_{1},T_{k})}+ \|\tilde{u}\|_{\mathbb{H}^{\gamma+2}_{p,p}(w_{2},w_{1},T_{k+1})} + \|\bar{u}\|_{\mathbb{H}^{\gamma+2}_{p,p}(w_{2,T_{k}},w_{1},T_{1})}
\\
&\leq N \|f\|_{\mathbb{H}^{\gamma}_{p,p}(w_{2},w_{1},T_{k+1})}.
\end{aligned}
\end{equation*}
Therefore, by induction we prove the a priori estimate.

\textit{Step 2.} $p=q$, $a^{ij}=a^{ij}(t,x)$ and $b^{i}=c=0$.
\\
Suppose that $u\in \mathbb{H}^{\alpha,\gamma+2}_{p,p,0}(w_{2},w_{1},T)$ satisfies \eqref{eqn 05.09.1}.   For $r>0$ let 
$$
a^{ij}_{r}(t,x)=a^{ij}(r^{2/\alpha}t,rx),\quad u_{r}(t,x)=u(r^{2/\alpha}t,rx), \quad f_{r}(t,x)=r^{2}f(r^{2/\alpha}t,rx).
$$
Then  we have
$$
\partial^{\alpha}_{t}u_{r}=a^{ij}_{r0}(u_{r})_{x^{i}x^{j}}+ f_{r} +(a^{ij}_{r}-a^{ij}_{0})(u_{r})_{x^{i}x^{j}} \quad  t\leq r^{-2/\alpha}T,
$$
where $a^{ij}_{r0}(t,x)=a^{ij}_{r}(t,0)$. By the above result and Lemma \ref{lem 01.06.16:05}, for any $t\leq r^{-2/\alpha}T$,
\begin{equation*}
\begin{aligned}
\|(u_{r})_{xx}\|_{\mathbb{H}^{\gamma}_{p,p}(w^{r}_{2},w^{r}_{1},t)} &\leq N_{0} \|f_{r}\|_{\mathbb{H}^{\gamma}_{p,p}(w^{r}_{2},w^{r}_{1},t)} 
\\
&\quad +N_{0} \sup_{t} |a^{ij}_{r0}-a^{ij}_{r}|_{B^{|\gamma|}}\|(u_{r})_{xx}\|_{\mathbb{H}^{\gamma}_{p,p}(w^{r}_{2},w^{r}_{1},t)},
\end{aligned}
\end{equation*}
where $w^{r}_{2}(t)=w_{2}(r^{2/\alpha}t)$ $w^{r}_{1}(x)=w(rx)$, and the constant $N_{0}$ is independent of $r$ and $t$. Note that due to the definition of $B^{|\gamma|}$, we have
$$
\sup_{t}|a^{ij}_{r0}(t,\cdot)-a^{ij}_{r}(t,\cdot)|_{B^{|\gamma|}} \leq \sup_{(t,x)}|a^{ij}(t,x)-a^{ij}(t,0)| + 1_{\gamma\neq 0} r^{|\gamma|\wedge 1}\delta^{-1}.
$$
Therefore, if there is a $\varepsilon_{1}\in(0,1)$ and $r$ small enough so that 
$$
\sup_{(t,x)}|a^{ij}(t,x)-a^{ij}(t,0)| + 1_{\gamma\neq 0} r^{|\gamma|\wedge 1}\delta^{-1}  \leq \varepsilon_{1}+ r^{|\gamma|\wedge 1}\delta^{-1} < \frac{1}{2}N_{0},
$$
for each $t\leq r^{-2/\alpha}T$, we have
$$
\|(u_{r})_{xx}\|_{\mathbb{H}^{\gamma}_{p,p}(w^{r}_{2},w^{r}_{1},t)}\leq 2N_{0}  \|f_{r}\|_{\mathbb{H}^{\gamma}_{p,p}(w^{r}_{2},w^{r}_{1},t)},
$$
and by change of variables we have
$$
\|u_{xx}\|_{\mathbb{H}^{\gamma}_{p,p}(w_{2},w_{1},t)}\leq 2N_{0}  \|f\|_{\mathbb{H}^{\gamma}_{p,p}(w_{2},w_{1},t)}
$$
for any $t\leq T$. From this and \eqref{eqn 12.21.16:05}, for any $t\leq T$, we have
\begin{equation}\label{eqn 01.12.17:02}
\|u\|_{\mathbb{H}^{\gamma+2}_{p,p}(w_{2},w_{1},t)} \leq N \|f\|_{\mathbb{H}^{\gamma}_{p,p}(w_{2},w_{1},t)}
\end{equation}
provided that $\sup_{(t,x)}|a^{ij}(t,x)-a^{ij}(t,0)|\leq \varepsilon_{1}$, where $N$ is independent of $t$.

Now take $\delta_{1}\in (0,1)$ so that $\sup_{t}|a^{ij}(t,x)-a^{ij}(t,y)|\leq \varepsilon_{1}/2$ whenever $|x-y|\leq 4\delta_{1}$. Also for this $\delta_{1}$, take a partition of unity $\zeta_{k}\in \Ccinf(\bR^{d})$ so that $0\leq \zeta_{k}\leq 1$, and support of $\zeta_{k}$ is contained in $B_{\delta_{1}}(x_{k})$ for some $x_{k}\in\bR^{d}$ and satisfying
\begin{gather*}
\sup_{x\in\bR^{d}}\sum_{k=1}^{\infty}|D^{\sigma}\zeta_{k}(x)| \leq M(\sigma) <\infty,
\\
0<1=\sum_{k=1}^{\infty}|\zeta_{k}(x)| \quad \forall x\in\bR^{d}.
\end{gather*}
Also take $\eta\in \Ccinf(\bR^{d})$ so that $0\leq \eta\leq 1$, $\eta=1$ on $B_{1}$, and $\eta=0$ outside of $B_{2}$. Define $\eta_{k}=\eta((x-x_{k})/\delta_{1})$ and $u_{k}=u\zeta_{k}$. Then if we define
\begin{equation}\label{eqn 01.14.10:21}
a^{ij}_{k}(t,x)=\eta_{k}(x)a^{ij}(t,x) + (1-\eta_{k}(x))a^{ij}(t,x_{k}),
\end{equation}
it follows that
$$
\partial^{\alpha}_{t}u_{k}=a^{ij}_{k}(u_{k})_{x^{i}x^{j}}+f_{k},
$$
where
$$
f_{k}=f\zeta_{k}+(a^{ij}_{k}u_{x^{i}x^{j}}\zeta_{k}-a^{ij}_{k}(u_{k})_{x^{i}x^{j}})=f\zeta_{k}-a^{ij}(2u_{x^{i}}(\zeta_{k})_{x^{j}}+u(\zeta_{k})_{x^{i}x^{j}})
$$
since $\eta_{k}=1$ on the support of $\zeta_{k}$. Also note that for any $t$ and $x,y\in\bR^{d}$,
\begin{equation}\label{eqn 01.14.10:23}
\begin{aligned}
|a^{ij}_{k}(t,x)-a^{ij}_{k}(t,y)| &=|\eta_{k}(x)(a^{ij}(t,x)-a^{ij}(t,x_{k}))-\eta_{k}(y)(a^{ij}(t,y)-a^{ij}(t,x_{k}))|
\\
&\leq |\eta_{k}(x)(a^{ij}(t,x)-a^{ij}(t,x_{k}))| 
\\
&\quad\quad+ |\eta_{k}(y)(a^{ij}(t,y)-a^{ij}(t,x_{k}))| \leq \varepsilon_{1}.
\end{aligned}
\end{equation}
Since $a^{ij}_{k}$ satisfies \eqref{eqn 07.22.2}, for each $t\leq T$ we have
\begin{equation}\label{eqn 01.14.10:46}
\begin{aligned}
&\|u\|^{p}_{\mathbb{H}^{\gamma+2}_{p,p}(w_{2},w_{1},t)}
\\
& \leq N \sum_{k=1}^{\infty}\|u_{k}\|^{p}_{\mathbb{H}^{\gamma+2}_{p,p}(w_{2},w_{1},t)} \leq N\sum_{k=1}^{\infty}\|f_{k}\|^{p}_{\mathbb{H}^{\gamma}_{p,p}(w_{2},w_{1},t)}
\\
&\leq N \sum_{k=1}^{\infty}\left(  \|f\zeta_{k}\|^{p}_{\mathbb{H}^{\gamma}_{p,p}(w_{2},w_{1},t)} + \|DuD\zeta_{k}\|^{p}_{\mathbb{H}^{\gamma}_{p,p}(w_{2},w_{1},t)} + \|uD^{2}\zeta_{k}\|^{p}_{\mathbb{H}^{\gamma}_{p,p}(w_{2},w_{1},t)} \right)
\\
&\leq N \|f\|^{p}_{\mathbb{H}^{\gamma}_{p,p}(w_{2},w_{1},t)} + N \|u\|^{p}_{\mathbb{H}^{\gamma+1}_{p,p}(w_{2},w_{1},t)}
\end{aligned}
\end{equation}
by Theorem \ref{thm 01.25.17:39} and \eqref{eqn 01.12.17:02}, where the constant $N$ does not depend on $t$. Therefore, by taking $t_{0}$ from Lemma \ref{lem 01.12.17:45}, for $t<t_{0}$ we have
\begin{equation*}
\|u\|_{\mathbb{H}^{\gamma+2}_{p,p}(w_{2},w_{1},t)} \leq N \|f\|_{\mathbb{H}^{\gamma}_{p,p}(w_{2},w_{1},t)}.
\end{equation*}
By following the induction argument in Step 1, we prove the a priori estimate.

\textit{Step 3.} $p\neq q$, and $b^{i}=c=0$.
\\
By the assumption on $a^{ij}$, there exists $R>0$ such that the oscillation of $a^{ij}$ outside of $B_{R/2}$ is less than $\varepsilon_{1}/2$. For this $R>0$, take a number $M$ such that $\sum_{k=1}^{M}\zeta_{k}=1$ on $B_{R}$ and vanishes outside of $B_{2R}$. Denote $\zeta_{0}=1-\sum_{k=1}^{M}\zeta_{k}$. If we define $a^{ij}_{k}$ as in \eqref{eqn 01.14.10:21} for $k=0,1,\dots,M$, then they satisfy \eqref{eqn 01.14.10:23}. Indeed, since the oscillation of $a^{ij}$ is bounded by $\varepsilon_{1}/2$ in the support of $\zeta_{0}$, by taking $\eta_{0}\in C^{\infty}(\bR^{d})$ such that $\eta_{0}=0$ on $B_{R/2}$ and $\eta_{0}=1$ outside of $B_{R}$, and defining
$$
a^{ij}_{0}(t,x)=\eta_{0}(x)a^{ij}(t,x) + (1-\eta_{k}(x))a^{ij}(t,x_{0}),
$$
where $|x_{0}|\geq R$, we have \eqref{eqn 01.14.10:23} for $a^{ij}_{0}$. By following \eqref{eqn 01.14.10:46}, we have
\begin{equation*}
\begin{aligned}
\|u\|^{q}_{\mathbb{H}^{\gamma+2}_{q,p}(w_{2},w_{1},t)} &\leq N\sum_{k=0}^{M}\|u\zeta_{k}\|^{q}_{\mathbb{H}^{\gamma+2}_{q,p}(w_{2},w_{1},t)}
\\
&\leq N \sum_{k=0}^{M}\big(   \|f\zeta_{k}\|^{q}_{\mathbb{H}^{\gamma}_{q,p}(w_{2},w_{1},t)} + \|DuD\zeta_{k}\|^{q}_{\mathbb{H}^{\gamma}_{q,p}(w_{2},w_{1},t)} 
\\
&\quad\quad\quad\quad \quad+ \|uD^{2}\zeta_{k}\|^{q}_{\mathbb{H}^{\gamma}_{q,p}(w_{2},w_{1},t)} \big)
\\
&\leq N \left( \|f\|^{q}_{\mathbb{H}^{\gamma}_{q,p}(w_{2},w_{1},t)} + \|u\|^{q}_{\mathbb{H}^{\gamma+1}_{q,p}(w_{2},w_{1},t)} \right).
\end{aligned}
\end{equation*}
Hence, by following the argument after \eqref{eqn 01.14.10:46}, we prove the a priori estimate.

\textit{Step 4.} General case.
\\
Suppose that $u\in\mathbb{H}^{\alpha,\gamma+2}_{q,p,0}(w_{2},w_{1},T)$ satisfies \eqref{eqn 05.09.1}. Then by Step 3, for any $t\leq T$ we have
\begin{equation*}
\begin{aligned}
\|u\|_{\mathbb{H}^{\gamma+2}_{q,p}(w_{2},w_{1},t)} &\leq N \left( \|f\|_{\mathbb{H}^{\gamma}_{q,p}(w_{2},w_{1},t)} +\|b^{i}u_{x^{i}}+cu\|_{\mathbb{H}^{\gamma}_{q,p}(w_{2},w_{1},t)}  \right)
\\
&\leq  N \left( \|f\|_{\mathbb{H}^{\gamma}_{q,p}(w_{2},w_{1},t)} +\|u\|_{\mathbb{H}^{\gamma+1}_{q,p}(w_{2},w_{1},t)}   \right),
\end{aligned}
\end{equation*}
where the constant $N$ does not depend on $t$. Therefore, by Lemma \ref{lem 01.12.17:45}, there exists $t_{0}<T$ such that
$$
\|u\|_{\mathbb{H}^{\gamma+2}_{p,p}(w_{2},w_{1},t)}\leq N \|f\|_{\mathbb{H}^{\gamma}_{p,p}(w_{2},w_{1},t)} \quad \forall\, t<t_{0}.
$$
Therefore, by following the induction argument in Step 1, we prove the a priori estimate. The theorem is proved.


\begin{thebibliography}{10}


\bibitem{benedetto1992}
J.J. Benedetto, H. Heinig,
\newblock{Fourier transform inequalities with measure weights,}
\newblock{\em Adv. Math.} \textbf{96} (1992), 194--225.

\bibitem{bergh2012interpolation}
J. Bergh, J. L\"oftr\"om, 
\newblock{\em Interpolation Spaces: an Introduction},
\newblock{Springer Science \& Business Media}, 2012.

\bibitem{clement1992global}
P. Cl{\'e}ment, J. Pr{\"u}ss,
\newblock Global existence for a semilinear parabolic Volterra equation,
\newblock {\em Math. Z.} \textbf{209} (1992), no.1, 17--26.

\bibitem{CE2019}
M. Cwikel, A. Einav,
\newblock {Interpolation of weighted Sobolev spaces},
\newblock {\em J. Funct. Anal.} \textbf{277} (2019), 2381--2441.

\bibitem{baleanu12fractional}
K. Diethelm, D. Baleanu, E. Scalas,
\newblock {\em Fractional calculus: Models and Numerical Methods},
\newblock World Scientific, 2012.



\bibitem{dong2019lp}
H. Dong, D. Kim,
\newblock $L_p$-estimates for time fractional parabolic equations with
  coefficients measurable in time,
\newblock {\em Adv. Math.} \textbf{345} (2019), 289--345.

\bibitem{dong2021lp}
H. Dong, D. Kim,
\newblock An approach for weighted mixed-norm estimates for parabolic equations with local and non-local time derivatives,
\newblock {\em Adv. Math.} \textbf{377} (2021), 107494.


\bibitem{fabes1982local}
E.B. Fabes, C.E. Kenig, R.P. Serapioni,
\newblock The local regularity of solutions of degenerate elliptic equations,
\newblock {\em Comm. Statist. Theory Methods.} \textbf{7} (1982), no.1, 77--116.

\bibitem{fackler2020}
S. Fackler, T.P. Hyt\"onen, N. Lidemulder,
\newblock{Weighted estimates for operator-valued Fourier multiplier},
\newblock{\em Collect. Math.} \textbf{71} (2020), 511--548.


\bibitem{goldshtein2009}
V. Gol'dshtein, A. Ukhlov,
\newblock{Weighted Sobolev spaces and embedding theorems},
\newblock{\em Trans. Amer. Math. Soc.} \textbf{361} (2009), no.7, 3829--3850



\bibitem{grafakos2009modern}
L. Grafakos,
\newblock {\em Modern Fourier Analysis,}
\newblock Springer, 2009.

\bibitem{gutierrez1991sobolev}
C.E. Gutierrez, R.L. Wheeden,
\newblock Sobolev interpolation inequalities with weights,
\newblock {\em Trans. Am. Math. Soc.} \textbf{323} (1991), no.1, 263--281.

\bibitem{han19timefractionalAp}
B.S. Han, K. Kim, D. Park, 
\newblock{Weighted $L_{q}(L_{p})$-estimate with Muckenhoupt weights for the diffusion-wave equations with time-fractional derivatives},
\newblock{\em J. Differ. Equ.} \textbf{269} (2020), no.4, 3515--3550.




\bibitem{kim17timefractionalpde}
I. Kim, K. Kim, S. Lim,
\newblock An {$L_q(L_p)$}-theory for the time fractional evolution equations
  with variable coefficients,
\newblock {\em Adv. Math.}, \textbf{306} (2017), 123--176.



\bibitem{kry94Littlewood}
N.V. Krylov,
\newblock{A generalization of the Littlewood-Paley inequality and some other results related to stochasitc partial differential equations},
\newblock{\em Ulam Quarterly}, \textbf{2} (1994), no.2, 16--26.


\bibitem{kry99analytic}
N.V. Krylov,
\newblock{An analytic approach on SPDEs},
\newblock{\em Stochastic Partial Differential Equations: Six Perspectives} \textbf{64} (1999), 185--325.

\bibitem{krylov2008lectures}
N.V. Krylov,
\newblock{\em Lecture Notes on Elliptic and Parabolic Equations in Sobolev Spaces},
\newblock{American Mathematical Society}, 2008.

\bibitem{kurtz1980littlewood}
D.S. Kurtz,
\newblock{Littlewood-Paley and multiplier theorems on weighted $L^{p}$ spaces},
\newblock{\em Trans. Amer. Math. Soc.} \textbf{259} (1980), no.1, 235--254.


\bibitem{LMV2018}
N. Lindermulder, M. Meyries, M. Veraar,
\newblock {Complex interpolation with Dirichlet boundary conditions on the half line},
\newblock {\em Math. Nachr.} \textbf{291} (2018), no.16, 2435--2456.

\bibitem{L1982}
J. L\"oftr\"om,
\newblock {Interpolation of weighted spaces of differentiable functions on $R^{d}$},
\newblock {\em Ann. Mat. Pura Appl.} \textbf{132} (1982), no.1, 189--214.




\bibitem{mainardi1995fractional}
F. Mainardi, 
\newblock Fractional diffusive waves in viscoelastic solids,
\newblock {\em Nonlinear waves in solids}, \textbf{137} (1995), 93--97.

\bibitem{mainardi2001fractional}
F. Mainardi, P. Paradisi,
\newblock Fractional diffusive waves,
\newblock {\em J. Comput. Acoust.} \textbf{9} (2001), no.4, 1417--1436.



\bibitem{metzler1999anomalous}
R. Metzler, E. Barkai, J. Klafter,
\newblock Anomalous diffusion and relaxation close to thermal equilibrium: A
  fractional Fokker-Planck equation approach,
\newblock {\em Phys. Rev. Lett.} \textbf{82} (1999), no.18, 35--63.

\bibitem{metzler2000random}
R. Metzler, J. Klafter,
\newblock The random walk's guide to anomalous diffusion: a fractional dynamics
  approach,
\newblock {\em Physics reports}, \textbf{339} (2000), no.1, 1--77.

\bibitem{MS2012}
M. Meyries, R. Schnaubelt,
\newblock {Interpolation, embeddings and traces of anistropic fractional Sobolev spaces with temporal weights,}
\newblock {\em J. Funct. Anal.} \textbf{262} (2012), 1200--1229.

\bibitem{miller1982sobolev}
N. Miller,
\newblock Weighted Sobolev spaces and pseudodifferential operators with smooth symbols,
\newblock{\em Trans. Amer. Math. Soc.} \textbf{269} (1982), no.1, 91--109.

\bibitem{muckenhoupt1972}
B. Muckenhoupt,
\newblock Weighted norm inequalities for the Hardy maximal function,
\newblock{\em Trans. Amer. Math. Soc.} \textbf{165} (1972), 207--266.

\bibitem{podlubny98fractional}
I. Podlubny,
\newblock  {\em Fractional Differential Equations: an Introduction to Fractional
  Derivatives, Fractional Differential Equations, to Methods of Their Solution
  and Some of Their Applications,}
\newblock Elsevier, 1998.


\bibitem{Pr1991}
J. Pr{\"u}ss,
\newblock Quasilinear parabolic Volterra equations in spaces of integrable
  functions,
\newblock {\em
  Semigroup Theory and Evolution Equations, Lecture Notes in Pure and Applied
  Mathematics}, \textbf{135} (1991), 401--420.

\bibitem{richard14fractional}
H. Richard,
\newblock {\em Fractional Calculus: an Introduction for Physicists},
\newblock World Scientific, 2014.

\bibitem{de1986calderon}
J.L. Rubio de Francia, F.J. Ruiz, J.L. Torrea,
\newblock{Calder\'on-Zygmund theory for operator-valued kernels},
\newblock{\em Adv. Math.} \textbf{62} (1986), 7--48.


\bibitem{samko93fractional}
S.G. Samko, A.A. Kilbas, O.I. Marichev,
\newblock {\em Fractional Integrals and Derivatives: Theory and Applications,}
\newblock CRC Press, 1993.

\bibitem{triebel1977}
H. Triebel,
\newblock{\em Interpolation Theory, Function Spaces, Differential Operators},
\newblock{North-Holland Publishing Company}, 1978.

\bibitem{triebel1983}
H. Triebel,
\newblock {\em Theory of Function Spaces},
\newblock Birkh\"auser, 1983.

\bibitem{zacher2005maximal}
R. Zacher,
\newblock Maximal regularity of type $L_p$ for abstract parabolic Volterra
  equations,
\newblock {\em J. Evol. Equ.} \textbf{5} (2005), no.1, 79--103.





\end{thebibliography}
\end{document}